\newtheorem{thm}{Theorem}
\newtheorem{defn}{Definition}
\newtheorem{lem}{Lemma}
\newtheorem{cor}{Corollary}
\newtheorem{pro}{Proposition}
\newtheorem{rem}{Remark}
\author{Yng-Ing Lee* and Yang-Kai Lue**}
\date{}
\title{The Stability of Self-Shrinkers of Mean Curvature Flow in Higher Codimension}
\begin{document}
\maketitle   \leftline{*Department of Mathematics, National Taiwan
University, Taipei, Taiwan} \leftline{\quad National Center for
Theoretical Sciences, Taipei Office, Taipei, Taiwan}
 \centerline{email: yilee@math.ntu.edu.tw} \leftline{**Department of Mathematics, National Taiwan
University, Taipei, Taiwan} \centerline{email: luf961@yahoo.com.tw}
 \begin{abstract}
 In this paper, we generalize Colding and Minicozzi's work \cite{CM} on the stability
 of self-shrinkers in the hypersurface case to higher co-dimensional cases. The first and
second variation formulae of the $F$-functional are derived and an
equivalent condition to the stability in general codimension is
found. Moreover, we show that the closed Lagrangian self-shrinkers
given by Anciaux in \cite{An} are unstable.
  \end{abstract}
  \section{Introduction}
Let $X:\Sigma\to \mathbb R^m$ be an isometric immersion of an
$n$-dimensional manifold $\Sigma$ in the Euclidean space $\mathbb
R^m$. The mean curvature flow of $X$ is a family of immersions
$X_t:\Sigma\to\mathbb R^m$ that satisfies
\begin{align}
\left(\frac{\partial}{\partial t}X_t(x)\right)^{\perp}&=H(x,t) \notag \\
                      X_0&=X  \notag
\end{align}
where $H(x,t)$ is the mean curvature vector of $X_t(\Sigma)$ at
$X_t(x)$ and $v^{\perp}$ denotes the projection of $v$ into the
normal space of $X_t(\Sigma)$. Mean curvature flow of a submanifold
in a Riemannian manifold can also be defined similarly. Because the
mean curvature vector points in the direction in which the area
decreases most rapidly, mean curvature flow is thus a
 canonical way to construct minimal submanifolds. It also improves
 the geometric properties of a object along the flow (e.g., see \cite{Hu1}) 

A submanifold $\Sigma$ in $\mathbb R^m$ is called a \emph{self-shrinker} if its
position vector $X:\Sigma\to \mathbb R^m$ satisfies
\begin{align}
H=-\frac{1}{2}X^{\perp}. \notag
\end{align}
The terminology comes from the fact that $\sqrt{1-t}X(\Sigma)$ is a solution of
mean curvature flow, i.e., a self-shrinker evolves homothetically along mean
curvature flow in
a shrinking way. Moreover, self-shrinkers describe all possible
central blow-up limits of a finite-time singularity of the mean curvature
flow. This follows from Huisken's
monotonicity formula \cite{Hu2}, and its generalization to type II
singularity by Ilmanen \cite{I1} and White.
 Singularities will occur in general along mean curvature flow and are
 obstacles to continue the flow.  It is therefore an important issue to understand
 singularities and the candidates of their blow-up limits, self-shrinkers.

 Standard sphere $\mathbb S^n(\sqrt{2n})$ and
cylinder $\mathbb S^{k}(\sqrt{2k})\times \mathbb R^{n-k}$ are simple examples
of self-shrinkers in $\mathbb R^m$.  Abresch and
Langer \cite{AL} found all immersed closed self-shrinkers in
the plane. For other complete hypersurfaces case,
 Huisken \cite{Hu3} classified all self-shrinkers with nonnegative
mean curvature and bounded geometry. The bounded geometry condition
is later weakened to polynomial volume growth by Colding and Minicozzi  in \cite{CM}.
On the other hand,
many other different co-dimension one self-shrinkers are found (e.g., see \cite{A}),
and a classification of all self-shrinkers is not expected. Our understanding on
self-shrinkers
in higher co-dimension is even more limited.  Smoczyk obtained a classification
for self-shrinkers with parallel
principal normal $\nu\equiv H/|H|$  and bounded geometry in \cite{Sm}.
Various different families of
Lagrangian self-shrinkers,  which are of middle dimension,
 are constructed in \cite{An}, \cite{LW2} and \cite{JLT}.

 Adapted from the back heat kernel introduced by Huisken in \cite{Hu2}, Colding and Minicozzi
 \cite{CM} defined a functional $F$  by
\begin{equation}
    F(\Sigma,x,t)=\frac{1}{\sqrt{4 \pi t}^n}\int_{\Sigma} e^{\frac{-|X-x|^2}{4t}}d\mu,\label{I1}
\end{equation}
for any  submanifold  $X:\Sigma^n\to\mathbb R^{n+1}$, $x\in \mathbb
R^{n+1}$ and $t>0$. One of the main properties of this functional is
that $(\Sigma, x_0,t_0)$ is a critical point of $F$ iff $\Sigma$
satisfies $H=-\frac{(X-x_0)^{\perp}}{2t_0}$. Especially, it is a
self-shrinker when $x_0=0$ and $t_0=1$. They proved that if an
$n$-dimensional complete smooth embedded self-shrinker $\Sigma^n$
without boundary and with polynomial volume growth in $\mathbb
R^{n+1}$ is $F$-stable with respect to compactly supported
variations, then it must be the round sphere or a hyperplane. Here
$F$-stable means that for every compactly supported smooth variation
$\Sigma_s$ with $\Sigma_0=\Sigma$, there exist variations $x_s$ of
$0$ and $t_s$ of $1$ such that $\frac{\partial^2}{\partial
s^2}F(\Sigma_s, x_s, t_s)\geq0$ at $s=0$. The importance of the
study is that  roughly speaking,
 the blow-up near type-I singularity of mean curvature flow for generic
initial data
gives stable self-shrinkers (see \cite{CM} for exact statement.)


In this paper, we intend to generalize Colding and Minicozzi's work
\cite{CM} to higher co-dimensional cases. The domain of the
functional $F$ is now $(\Sigma, x, t)$ for $\Sigma^n\subset\mathbb
R^m$, $x\in\mathbb R^m$ and $t>0$. Colding and Minicozzi's
classification on stable self-shrinkers in co-dimension one is first
to conclude that the mean curvature function $h$ is the first
eigenvalue of an elliptic operator, it then implies $h\ge  0$, and
Huisken's classification of self-shrinkers with nonnegative $h$ will
lead to the conclusion. Although the counter part of Huisken's
result in higher co-dimension is  still not available, we can also
pin down the stability of self-shrinkers in higher co-dimension to
the mean curvature vector being the first vector-valued
eigenfunction for an elliptic system. More precisely, the equivalent
condition of stabilities is as in the following Theorems.

\noindent {\bf Theorem \ref{thm3} } {\it Suppose $\Sigma \subset
\mathbb{R}^m$ is an $n$-dimensional smooth closed self-shrinker
$H=-\frac{X^{\perp}}{2}$. The following statements are equivalent:

(i) $\Sigma$ is F-stable.

(ii) $\int_{\Sigma}\langle V,-L^{\perp}V\rangle
e^{-\frac{|X|^2}{4}}d\mu\geq0$ for any smooth normal vector field
$V$ which satisfies
\begin{align}
\int_{\Sigma}\langle V,H\rangle e^{-\frac{|X|^2}{4}}d\mu=0 \quad\mbox{and}\quad
\int_{\Sigma}Ve^{-\frac{|X|^2}{4}}d\mu=\overrightarrow{0}, \notag
\end{align}
where $L^{\perp}V=\Delta^{\perp}V+\langle A_{ij},V\rangle
g^{ki}g^{jl}A_{kl}+\frac{V}{2}
-\frac{1}{2}\nabla^{\perp}_{X^{\top}}V$ is a second order elliptic operator
and $A_{ij}$ is the second fundamental form as definition in
(\ref{201}), and $\nabla^{\perp}$ is the normal connection of
$\Sigma$. }

For the complete case, we define
\begin{align}
S_{\Sigma}=\{V\in N\Sigma\big|\ \ |V|(X) \mbox{ and } |\nabla^{\perp}V|(X)
\mbox{ are of polynomial growth } \}. \notag
\end{align}
Note that $V\in S_{\Sigma}$ is not necessarily with compact support.
The equivalent condition for the stability of
$F$ in the complete case becomes

\noindent {\bf Theorem \ref{t4} } {\it Let $\Sigma \subset
\mathbb{R}^m$ be an $n$-dimensional smooth complete self-shrinker
$H=-\frac{X^{\perp}}{2}$ without boundary. Suppose that the second
fundamental form $A$ of $\Sigma$ is of polynomial growth and
$\Sigma$ has polynomial volume growth. The following statements are
equivalent:

(i) $\Sigma$ is $F$-stable.

(ii) $\int_{\Sigma}\langle V,-L^{\perp}V\rangle
e^{-\frac{|X|^2}{4}}d\mu\geq0$ for any smooth normal vector field
$V$ in $S_{\Sigma}$ which satisfies
\begin{align}
\int_{\Sigma}\langle V,H\rangle e^{-\frac{|X|^2}{4}}d\mu=0 \quad \mbox{and} \quad
\int_{\Sigma}Ve^{-\frac{|X|^2}{4}}d\mu=\overrightarrow{0}. \notag
\end{align}
}

Using Theorem \ref{thm3} and \ref{t4}, we immediately can conclude that
 the product of any two non-trivial self-shrinkers, which is also a
self-shrinker, is $F$-unstable.

\noindent {\bf Corollary \ref{c1} } {\it Suppose
$\Sigma_i^{n_i}\subset\mathbb R^{m_i}$, $i=1,2$, are smooth closed
self-shrinkers which satisfy $H_i=-\frac{X_i^{\perp}}{2}$, where
$X_i$ are the position vectors of $\Sigma_i$. Then $\Sigma_1\times
\Sigma_2\subset\mathbb R^{m_1+m_2}$ is a self-shrinker and is
F-unstable.}

\noindent {\bf Corollary \ref{c2} } {\it Let
$\Sigma_i^{n_i}\subset\mathbb R^{m_i}$, $i=1,2$, be two smooth
complete self-shrinkers without boundary which satisfy
$H_i=-\frac{X_i^{\perp}}{2}\neq0$, where $X_i$ is the position vector
of $\Sigma_i$. Suppose that each $\Sigma_i$ has polynomial volume
growth and the second fundamental form of $\Sigma_i$ is of
polynomial growth. Then $\Sigma_1\times \Sigma_2\subset\mathbb
R^{m_1+m_2}$ is a self-shrinker and is F-unstable. }

 Note that we in fact allow the self-shrinkers to be immersed in our discussion.
The examples $\mathbb S^n(\sqrt{2n})$ and $\mathbb R^n$ are still
stable self-shrinkers in $\mathbb R^m$, but the situation for all other
higher co-dimensional examples is not clear.
 We employ the above equivalent condition to investigate
the $F$-stability of the Lagrangian self-shrinkers constructed by Anciaux in
\cite{An} in Section $4$. These $n$-dimensional self-shrinkers in
$\mathbb C^n$, $n \geq 2,$ are expressed as $\gamma(s)\psi(\sigma)$, where
$\psi:M^{n-1}\to \mathbb S^{2n-1}\subset \mathbb C^n$ is a minimal Legendrian immersion
and $\gamma$ is a complex-valued function that satisfies the system
of ordinary differential equations
(\ref{l201}). We prove that

\noindent {\bf Theorem \ref{t5} } {\it Anciaux's closed examples
 as described in Lemma~\ref{l2}
is $F$-unstable. }

Since Anciaux's examples are Lagrangian in $\mathbb
C^n$, it is natural to ask whether these examples are
still $F$-unstable under the restricted Lagrangian variations.
We have the following

\noindent {\bf Theorem \ref{thm6} } {\it Anciaux's closed examples is $F$-unstable under
Lagrangian variations for the following cases
\begin{itemize}
\item[\rm{(i)}] $n=2$ or  $n\geq7$,
\item[\rm{(ii)}] $2<n<7$, and
$E\in[\frac{1}{\sqrt{2}}E_{max},E_{max}]$,
\end{itemize}
where $E$ and $E_{max}$ are described in ($\ref{l202}$). }

{\bf Acknowledgements:} The authors are grateful to Mu-Tao Wang for his constant
support and interest in this work. The first author also like to thank Jacob Bernstein's
discussions.
\section{The 1st and 2nd variation formulae of $F$}
\subsection{Notations and preliminaries} Let $X:\Sigma^n \to
\mathbb{R}^{m}$ be a smooth isometric immersion of a submanifold of
codimension $m-n$. If $\{e_i\}$ and $\{e_{\alpha}\}$ are orthonormal
frames for the tangent bundle $T\Sigma$ and the normal bundle
$N\Sigma$, respectively, then the coefficients of the second
fundamental form and the mean curvature vector are defined to be
\begin{align}
A_{ij}&=A^{\alpha}_{ij}e_{\alpha}\equiv\langle\overline{\nabla}_{e_i}e_j,
e_{\alpha}\rangle e_{\alpha} \label{201}\\
\mbox{and}\quad H&=H^{\alpha}e_{\alpha}\equiv A_{ii}, \notag
\end{align}
where by convention we are summing over repeated indices and
$\overline{\nabla}$ is the standard connection of the ambient
Euclidean space. For a submanifold $B$ in an ambient manifold  $C$,
we use $A^{B,C}$ and $H^{B,C}$ to denote the associated second
fundamental form and mean curvature vector, respectively. When the
ambient space is $\mathbb{C}^n$, we denote them as $A^B$ (or $A$)
and $H^B$ (or $H$) for simplicity. Given a normal vector field $V$
in $N\Sigma$, $\langle A,V\rangle$ is a $(2,0)-$tensor and $|\langle
A,V\rangle|^2$ is defined as
$\underset{i,j=1}{\overset{n}{\sum}}\langle A_{ij},V\rangle^2$. When
$\Sigma$ is a hypersurface, the mean curvature vector $H$ and the
second fundamental form reduce to the function $h=-\langle
H,\textbf{n}\rangle$ and the 2-tensor $h_{ij}=-\langle
A_{ij},\textbf{n}\rangle$, respectively. Here $\textbf{n}$ is the
unit outer normal vector of $\Sigma$.
\begin{defn}
{\rm Let $\Sigma$ be a submanifold in $\mathbb R^m$ and $B_r(0)$ be
the geodesic ball in $\mathbb R^m$ with radius $r$. $\Sigma$ is said
to have} {\it polynomial volume growth} {\rm if there are constants
$C_1$, $C_2$ and $k\in\mathbb N$ so that for all $r\geq0$}
\begin{align}
Vol(B_r(0)\cap\Sigma)\leq C_1r^k+C_2. \notag
\end{align}
\end{defn}
\begin{defn} \label{d1}
{\rm A normal vector field $V$ (or the second fundamental form $A$)
of $\Sigma$ is of} {\it polynomial growth} {\rm if there are
constants $C_1$, $C_2$ and $k\in\mathbb N$ so that for all $r\geq0$}
\begin{align}
|V|\leq C_1r^k+C_2 \quad (\mbox{\rm or}\quad
|A|\leq C_1r^k+C_2) \quad \mbox{\rm on} \quad B_r(0)\cap\Sigma.\notag
\end{align}
\end{defn}
The space of all normal vector fields with polynomial growth is
denoted by $P\Gamma(N\Sigma)$. For any two normal vector fields $V$
and $W$ in $P\Gamma(N\Sigma)$, its weighted inner product, denoted
as $\langle V,W\rangle_e$, is defined to be $\int_{\Sigma}\langle
V,W\rangle e^{-\frac{|X|^2}{4}}d\mu$. The space $(P\Gamma(N\Sigma),
\langle\cdot,\cdot\rangle_e)$ is called the {\it weighted inner
product space.}

\subsection{The first variation formula of $F$}
Colding and Minicozzi derived the first and second variation
formulae of the $F$ functionals of a hypersurface in \cite{CM}.
These can be generalized to higher co-dimensional cases by similar
calculation.
We derive the first variation formula of $F$ in the following
Theorem.
\begin{thm}
Let $\Sigma\subset\mathbb R^m$ be an $n$-dimentional complete
manifold without boundary which has polynomial volume growth.
Suppose that $\Sigma_{s}\subset\mathbb R^m$ is a normal variation of
$\Sigma$, $x_{s}$, $t_{s}$ are variations of $x_0$ and $t_0$, and
\begin{equation}
\frac{\partial\Sigma_s}{\partial s}=V,\quad\frac{\partial x_s}{\partial s}=y,
 \quad and \quad \frac{\partial t_s}{\partial s}=\tau, \notag
\end{equation}
where $V$ has compact support. Then
\begin{align}\label{l101}
\frac{\partial}{\partial s}F(\Sigma_s,x_s,t_s)=
\frac{1}{\sqrt{4\pi t_s}^n}\int_{\Sigma_s}\Bigl(&
-\langle V,H_s+\frac{X_s-x_s}{2t_s}\rangle
+\tau(\frac{|X_s-x_s|^2}{4t_s^2}-\frac{n}{2t_s}) \notag \\
&+\frac{\langle X_s-x_s,y\rangle}{2t_s}\Bigr)e^{\frac{-|X_s-x_s|^2}{4t_s}}d\mu,
\end{align}
where $X_s$ is the position vector of $\Sigma_s$ and $H_s$ is its
mean curvature vector.
\end{thm}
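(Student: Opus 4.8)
The plan is to exploit the product structure of the integrand: writing the weight as $(4\pi t_s)^{-n/2}\,e^{-|X_s-x_s|^2/4t_s}$ and applying the product rule, the derivative $\partial_s F$ splits into a sum of three contributions coming from varying $t_s$, $x_s$, and $\Sigma_s$ separately. I would first dispose of the two ``ambient'' variations, which are purely pointwise. Differentiating the prefactor $(4\pi t_s)^{-n/2}$ in $s$ produces the factor $-\frac{n}{2t_s}\tau$, while differentiating the exponential in $t_s$ produces $\frac{|X_s-x_s|^2}{4t_s^2}\tau$; together these yield the $\tau$-term. Differentiating the exponential in $x_s$ (using $\partial_s x_s=y$) gives $\frac{\langle X_s-x_s,y\rangle}{2t_s}$, the $y$-term. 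These are elementary chain-rule computations identical to the hypersurface case in \cite{CM}.

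The geometric heart is the variation of $\Sigma_s$ itself, where two effects combine. First, as the immersion moves with normal velocity $V$, the integrand $e^{-|X_s-x_s|^2/4t_s}$ changes through the motion of the base point $X_s$; differentiating gives $-\frac{\langle X_s-x_s,V\rangle}{2t_s}$, and since $V$ is normal this equals $-\langle V,\frac{X_s-x_s}{2t_s}\rangle$. Second, the area element varies, and here I would invoke the first variation of area in arbitrary codimension: for a normal field $V=V^\alpha e_\alpha$ one has $\partial_s(d\mu)=(\mathrm{div}_\Sigma V)\,d\mu$, and since $\langle\overline\nabla_{e_i}e_\alpha,e_i\rangle=-\langle e_\alpha,A_{ii}\rangle=-H^\alpha$, the divergence reduces to $-\langle V,H_s\rangle$. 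This produces the $-\langle V,H_s\rangle$ contribution, which merges with the previous term into $-\langle V,H_s+\frac{X_s-x_s}{2t_s}\rangle$. Summing the three pieces and restoring the common prefactor $(4\pi t_s)^{-n/2}$ yields the claimed formula \eqref{l101}.

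The main technical obstacle is justifying differentiation under the integral sign, since $\Sigma$ is only complete rather than compact. The normal variation $V$ has compact support, so the $\Sigma$-part of the computation is confined to a fixed compact region and causes no difficulty. For the $x$- and $t$-variations, however, the whole noncompact integral is differentiated; here I would combine the Gaussian factor $e^{-|X-x|^2/4t}$ with the polynomial volume growth hypothesis to produce a dominating integrable function, uniformly for $s$ in a small interval and $(x_s,t_s)$ near $(x_0,t_0)$, and then apply the dominated convergence theorem to move $\partial_s$ inside the integral. The quantitative point requiring care is that the Gaussian weight times any fixed polynomial is integrable against a measure of polynomial volume growth, which is exactly the estimate that makes $F$ finite in the first place; once this is in hand the interchange of derivative and integral follows routinely.
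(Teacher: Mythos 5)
Your proposal is correct and follows essentially the same route as the paper: the chain rule applied to the weight $(4\pi t_s)^{-n/2}e^{-|X_s-x_s|^2/4t_s}$ (split into its $t_s$, $x_s$, and $X_s$ dependence) combined with the first variation of area $\partial_s(d\mu)=-\langle H_s,V\rangle\,d\mu$ in arbitrary codimension; the paper merely organizes the weight computation through logarithmic derivatives rather than the product rule. Your additional paragraph justifying differentiation under the integral sign via compact support of $V$ and the Gaussian-versus-polynomial-volume-growth domination makes explicit a point the paper leaves implicit, which is a welcome refinement rather than a deviation.
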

\begin{proof}
From the first variation formula for area, we know that
\begin{equation}\label{l11}
\frac{\partial}{\partial s}(d\mu)=-\langle H_s,V\rangle d\mu.
\end{equation}
The variation of the weight $\frac{1}{\sqrt{4\pi
t_s}^n}e^{-|X_s-x_s|^2/4t_s}$ have terms coming from the variation
of $X_s$, the variation of $x_s$ and the variation of $t_s$,
respectively. Using the following equations
\begin{align}
\frac{\partial}{\partial {t_s}} \log\bigl((4\pi t_s)^{-n/2}e^{-\frac{|X_s-x_s|^2}{4t_s}}\bigr)
&=\frac{-n}{2t_s}+\frac{|X_s-x_s|^2}{4t_s^2}, \notag \\
\frac{\partial}{\partial {x_s}} \log\bigl((4\pi t_s)^{-n/2}e^{-\frac{|X_s-x_s|^2}{4t_s}}\bigr)
&=\frac{X_s-x_s}{2t_s} \notag \\
\mbox{and  \ \ \ }
\frac{\partial}{\partial {X_s}} \log\bigl((4\pi t_s)^{-n/2}e^{-\frac{|X_s-x_s|^2}{4t_s}}\bigr)
&=-\frac{X_s-x_s}{2t_s}, \notag
\end{align}
we obtain
\begin{align}
&\frac{\partial}{\partial s}\log\bigl((4\pi
t_s)^{-n/2}e^{-\frac{|X_s-x_s|^2}{4t_s}}\bigr) \notag \\
=&-\frac{\langle X_s-x_s,V\rangle}{2t_s}+\tau(\frac{|X_s-x_s|^2}{4t_s^2}-\frac{n}{2t_s})
+\frac{1}{2t_s}\langle X_s-x_s,y\rangle. \notag
\end{align}
Combining this with (\ref{l11}) gives (\ref{l101}).
\end{proof}
\begin{defn}
{\rm We will call $(\Sigma, x_0, t_0)$ a} {\it critical point} {\rm
of $F$ if it is critical with respect to all normal variations which
have compact support in $\Sigma$ and all variations in $x$ and $t$.}
\end{defn}
From the definition of $F$ in (\ref{I1}), we have
$F(\Sigma,x,t)=F(\frac{\Sigma-x}{\sqrt{t}},0,1)$ and it is easy to
see the following property:
\begin{align}
&(\Sigma,x_0,t_0) \mbox{ is a critical point of } F
\mbox{ if and only if } (\frac{\Sigma-x_0}{\sqrt{t_0}},0,1) \notag \\
&\mbox{ is a critical point of } F. \label{p001}
\end{align}
Therefore, we only consider the case $x_0=0$, $t{_0}=1$. In the case
of hypersurfaces, Colding and Minicozzi proved that, $(\Sigma,0,1)$
is a critical point of $F$ if $\Sigma$ satisfies that
$h=\frac{\langle X,\textbf{n}\rangle}{2}$. Their result, when
written in the vector form $H=-\frac{X^{\perp}}{2}$, also holds for
higher co-dimensional cases. The proof needs following propositions.
\begin{pro} \label{p1}
If $\Sigma \subset \mathbb{R}^m$ is an $n$-dimensional complete
submanifold with $H=-\frac{X^{\perp}}{2}$, then
\begin{align}
\mathcal{L} X_i&=-\frac{1}{2}X_i \quad\mbox{and}\notag \\
\mathcal{L} |X|^2&=2n-|X|^2. \label{p102}
\end{align}
Here $X_i$ is the i-th component of the position vector $X$, i.e.,
$X_i=\langle X,\partial_i\rangle$ and the linear operator
$\mathcal{L} f=\Delta f-\frac{1}{2}\langle X,\nabla
f\rangle=e^{\frac{|X|^2}{4}}div(e^{\frac{-|X|^2}{4}}\nabla f)$.
\end{pro}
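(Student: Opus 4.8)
The plan is to reduce everything to two elementary identities for the position vector of an immersed submanifold and to insert the self-shrinker equation $H=-\frac{1}{2}X^{\perp}$ only at the very end. First I would record the two facts I need about the coordinate functions $X_i=\langle X,\partial_i\rangle$, where $\partial_i$ is the $i$-th standard basis vector of $\mathbb R^m$. The gradient on $\Sigma$ is the tangential projection $\nabla X_i=\partial_i^{\top}$: this comes from $e_k(X_i)=\langle\overline{\nabla}_{e_k}X,\partial_i\rangle=\langle e_k,\partial_i\rangle$ using $\overline{\nabla}_{e_k}X=e_k$. The Laplacian satisfies the Beltrami relation $\Delta X_i=\langle H,\partial_i\rangle=H_i$: working in a frame that is geodesic at the point, so that $\overline{\nabla}_{e_k}e_k=(\overline{\nabla}_{e_k}e_k)^{\perp}=A_{kk}$ there, gives $\Delta X_i=\sum_k\langle A_{kk},\partial_i\rangle=\langle H,\partial_i\rangle$.

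With these in hand the first identity is immediate: $\mathcal{L}X_i=\Delta X_i-\frac{1}{2}\langle X,\nabla X_i\rangle=H_i-\frac{1}{2}\langle X,\partial_i^{\top}\rangle=H_i-\frac{1}{2}\langle X^{\top},\partial_i\rangle$. Now I would split $X=X^{\top}+X^{\perp}$ and use $H=-\frac{1}{2}X^{\perp}$ to write $H_i=\langle H,\partial_i\rangle=-\frac{1}{2}\langle X^{\perp},\partial_i\rangle$, so that the two terms recombine into $-\frac{1}{2}\langle X^{\perp}+X^{\top},\partial_i\rangle=-\frac{1}{2}X_i$.

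For the second identity I would write $|X|^2=\sum_i X_i^2$ and apply the product rule twice. The gradient is $\nabla|X|^2=2\sum_i X_i\nabla X_i=2\sum_i X_i\partial_i^{\top}=2X^{\top}$, so $\langle X,\nabla|X|^2\rangle=2|X^{\top}|^2$. For the Laplacian, $\Delta|X|^2=\sum_i\bigl(2X_i\Delta X_i+2|\nabla X_i|^2\bigr)$; the first sum is $2\langle X,H\rangle=2\langle X^{\perp},H\rangle=-|X^{\perp}|^2$ after inserting the self-shrinker equation, and the second is $2\sum_i|\partial_i^{\top}|^2=2n$ by the trace identity $\sum_i|\partial_i^{\top}|^2=\sum_i\sum_k\langle\partial_i,e_k\rangle^2=\sum_k|e_k|^2=n$. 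Combining, $\mathcal{L}|X|^2=(2n-|X^{\perp}|^2)-|X^{\top}|^2=2n-|X|^2$.

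I do not expect a genuine obstacle here, since the computation is short once the two base identities are set up. The step most worth stating carefully is the Beltrami relation $\Delta X_i=H_i$, since it is where the ambient connection, the tangential/normal splitting, and the definition $H=A_{ii}$ all enter. The self-shrinker hypothesis is used only to convert $\langle H,\cdot\rangle$ and $\langle X,H\rangle$ into $X^{\perp}$ terms, and the one place where a sign or projection slip could creep in is the bookkeeping that reassembles $X^{\top}$ and $X^{\perp}$ back into $X$ (respectively $|X|^2$).
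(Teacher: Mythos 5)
Your proof is correct, and it is exactly the standard computation the paper alludes to: the paper itself omits the proof of this proposition, citing Colding--Minicozzi's hypersurface version (Lemma 3.20 in \cite{CM}) and noting the argument is similar, and your argument is precisely that argument carried out in higher codimension, with the sign conventions matching the paper's definition $A_{ij}=(\overline{\nabla}_{e_i}e_j)^{\perp}$, $H=A_{ii}$. In particular the two base identities $\nabla X_i=\partial_i^{\top}$, $\Delta X_i=H_i$, the trace identity $\sum_i|\partial_i^{\top}|^2=n$, and the recombination of $X^{\top}+X^{\perp}$ into $X$ are all handled correctly.
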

\begin{pro} \label{p2}
If $\Sigma \subset \mathbb{R}^m$ is an $n$-dimensional complete
submanifold, $\partial \Sigma =\emptyset$, $\Sigma$ has polynomial
volume growth, and $H=-\frac{X^{\perp}}{2}$, then
\begin{align}
&\int_{\Sigma}Xe^{\frac{-|X|^2}{4}}d\mu=\overrightarrow{0}
=\int_{\Sigma}X|X|^2e^{\frac{-|X|^2}{4}}d\mu \quad\mbox{and} \notag\\
&\int_{\Sigma}(|X|^2-2n)e^{\frac{-|X|^2}{4}}d\mu=0.  \label{p201}
\end{align}
Moreover, if $W\in\mathbb R^m$ is a constant vector, then
\begin{align}
\int_{\Sigma}\langle X,W\rangle^2e^{-\frac{|X|^2}{4}}d\mu
=2\int_{\Sigma}|W^{\top}|^2e^{-\frac{|X|^2}{4}}d\mu. \label{p202}
\end{align}
\end{pro}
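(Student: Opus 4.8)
The plan is to derive every identity in (\ref{p201}) and (\ref{p202}) from Proposition~\ref{p1} together with a single analytic fact: the drift operator $\mathcal{L}$ is self-adjoint on the weighted space, namely
\begin{align}
\int_{\Sigma} g\,\mathcal{L}f\, e^{-\frac{|X|^2}{4}}d\mu
= -\int_{\Sigma}\langle \nabla f,\nabla g\rangle\, e^{-\frac{|X|^2}{4}}d\mu
= \int_{\Sigma} f\,\mathcal{L}g\, e^{-\frac{|X|^2}{4}}d\mu, \notag
\end{align}
with no contribution from infinity. This is just the divergence theorem applied to the divergence-form expression $e^{-\frac{|X|^2}{4}}\mathcal{L}f=\operatorname{div}(e^{-\frac{|X|^2}{4}}\nabla f)$ recorded in Proposition~\ref{p1}. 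In particular, taking $g\equiv 1$ yields $\int_{\Sigma}\mathcal{L}f\, e^{-\frac{|X|^2}{4}}d\mu=0$ for suitable $f$. With this in hand the rest is bookkeeping with the two eigenvalue equations $\mathcal{L}X_i=-\frac12 X_i$ and $\mathcal{L}|X|^2=2n-|X|^2$.

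I would first dispose of the second line of (\ref{p201}) and the first vector identity. Integrating $\mathcal{L}|X|^2=2n-|X|^2$ against the weight and using $\int_\Sigma \mathcal{L}|X|^2\,e^{-\frac{|X|^2}{4}}d\mu=0$ gives directly $\int_{\Sigma}(|X|^2-2n)e^{-\frac{|X|^2}{4}}d\mu=0$. Likewise, integrating $\mathcal{L}X_i=-\frac12 X_i$ componentwise gives $-\frac12\int_{\Sigma}X_i e^{-\frac{|X|^2}{4}}d\mu=0$, hence $\int_{\Sigma}Xe^{-\frac{|X|^2}{4}}d\mu=\overrightarrow{0}$.

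For $\int_{\Sigma}X|X|^2 e^{-\frac{|X|^2}{4}}d\mu=\overrightarrow{0}$, I pair $f=X_i$ with $g=|X|^2$ in the self-adjointness identity. Writing $I_i=\int_{\Sigma}X_i|X|^2 e^{-\frac{|X|^2}{4}}d\mu$, the left-hand side is $\int_{\Sigma}(\mathcal{L}X_i)|X|^2 e^{-\frac{|X|^2}{4}}d\mu=-\frac12 I_i$, while the right-hand side is $\int_{\Sigma}X_i(2n-|X|^2)e^{-\frac{|X|^2}{4}}d\mu=-I_i$, the term $2n\int_\Sigma X_i e^{-\frac{|X|^2}{4}}d\mu$ having just been shown to vanish. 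Thus $-\frac12 I_i=-I_i$, forcing $I_i=0$ for each $i$. For (\ref{p202}), set $f=\langle X,W\rangle=\sum_i W_i X_i$; by linearity $\mathcal{L}f=-\frac12 f$. Since $\overline{\nabla}_{e_j}X=e_j$ one has $\nabla\langle X,W\rangle=W^{\top}$, so $|\nabla f|^2=|W^{\top}|^2$, and taking $g=f$ in the middle equality above gives $-\frac12\int_{\Sigma}\langle X,W\rangle^2 e^{-\frac{|X|^2}{4}}d\mu=-\int_{\Sigma}|W^{\top}|^2 e^{-\frac{|X|^2}{4}}d\mu$, which is exactly (\ref{p202}) after multiplying by $-2$.

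The only non-algebraic point, and the step I expect to be the main obstacle, is justifying that the boundary terms at infinity genuinely vanish in each integration by parts on the non-compact $\Sigma$. I would handle this with the standard cutoff argument: insert $\phi(|X|/r)$ for a fixed cutoff $\phi$ and show the error terms tend to $0$ as $r\to\infty$. The functions involved ($X_i$ and $|X|^2$) grow at most polynomially and their gradients at most linearly ($|\nabla X_i|\le 1$, $|\nabla|X|^2|=2|X^{\top}|\le 2|X|$), whereas $e^{-\frac{|X|^2}{4}}$ decays super-exponentially; combined with the polynomial volume growth hypothesis this makes all integrands and their cutoff errors integrable with vanishing tails. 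Once this decay-versus-growth estimate is established, the self-adjointness is legitimate and the remaining computations are purely the linear algebra above.
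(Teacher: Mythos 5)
Your proposal is correct and is essentially the same argument as the one the paper appeals to: the paper omits the proof, citing Lemma 3.20 and Lemma 3.25 of \cite{CM}, and those lemmas are proved exactly by your method --- weighted self-adjointness of $\mathcal{L}$ (justified by a cutoff argument using polynomial volume growth against Gaussian decay), combined with the eigenfunction identities $\mathcal{L}X_i=-\tfrac12 X_i$ and $\mathcal{L}|X|^2=2n-|X|^2$ from Proposition~\ref{p1}, including the pairing of $X_i$ with $|X|^2$ for the cubic identity and $\nabla\langle X,W\rangle=W^{\top}$ for (\ref{p202}).
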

These propositions were proved by Colding and Minicozzi in the case
of hypersurfaces (see Lemma 3.20 and Lemma 3.25 in \cite{CM}). We
omit the proofs here because the argument is similar. Combining
(\ref{l101}), (\ref{p001}) and (\ref{p201}), we get
\begin{pro}
For any $x_0\in \mathbb R^m$, $t_0\in\mathbb R^+$,
$(\Sigma,x_0,t_0)$ is a critical point of $F$ if and only if
$H=-\frac{(X-x_0)^{\perp}}{2t_0}$.
\end{pro}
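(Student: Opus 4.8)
The plan is to reduce to the base case $x_0=0$, $t_0=1$ using the scaling property (\ref{p001}), and then simply read off the critical point conditions from the first variation formula (\ref{l101}). Writing $\tilde\Sigma=\frac{\Sigma-x_0}{\sqrt{t_0}}$ with position vector $\tilde X=\frac{X-x_0}{\sqrt{t_0}}$, one checks that under this dilation the mean curvature vector scales as $\tilde H=\sqrt{t_0}\,H$, so the equation $H=-\frac{(X-x_0)^{\perp}}{2t_0}$ for $\Sigma$ is equivalent to $\tilde H=-\frac{\tilde X^{\perp}}{2}$ for $\tilde\Sigma$. Combined with (\ref{p001}), it therefore suffices to prove that $(\Sigma,0,1)$ is a critical point of $F$ if and only if $H=-\frac{X^{\perp}}{2}$.

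Evaluating (\ref{l101}) at $s=0$ with $x_0=0$, $t_0=1$ yields
\[
\frac{\partial}{\partial s}F=\frac{1}{\sqrt{4\pi}^{\,n}}\int_{\Sigma}\Bigl(-\langle V,H+\tfrac{X}{2}\rangle+\tau\bigl(\tfrac{|X|^2}{4}-\tfrac{n}{2}\bigr)+\tfrac{1}{2}\langle X,y\rangle\Bigr)e^{-\frac{|X|^2}{4}}d\mu,
\]
and criticality means this vanishes for every compactly supported normal field $V$, every $\tau\in\mathbb{R}$, and every constant $y\in\mathbb{R}^m$. For the direction assuming the self-shrinker equation, I would show each of the three terms vanishes separately: since $V$ is normal, $\langle V,X\rangle=\langle V,X^{\perp}\rangle$, so the first integrand equals $\langle V,H+\tfrac{X^{\perp}}{2}\rangle=0$ by the equation; the $\tau$-term is $\tfrac{\tau}{4}\int_{\Sigma}(|X|^2-2n)e^{-|X|^2/4}d\mu=0$ and the $y$-term is $\tfrac{1}{2}\langle y,\int_{\Sigma}Xe^{-|X|^2/4}d\mu\rangle=0$, both by the identities (\ref{p201}) of Proposition \ref{p2}. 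Hence $(\Sigma,0,1)$ is critical. For the converse I would exploit the freedom in $V$ alone: setting $\tau=0$, $y=0$ and letting $V$ range over all compactly supported normal fields forces $\int_{\Sigma}\langle V,H+\tfrac{X^{\perp}}{2}\rangle e^{-|X|^2/4}d\mu=0$, and since the Gaussian weight is strictly positive the fundamental lemma of the calculus of variations gives the pointwise identity $H=-\frac{X^{\perp}}{2}$.

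The calculation itself is routine matching of terms; the only conceptual point worth flagging is that criticality in the $x$- and $t$-directions imposes \emph{no} condition beyond the self-shrinker equation. This is not automatic, and it is precisely the content of the integral identities $\int_{\Sigma}Xe^{-|X|^2/4}d\mu=\overrightarrow{0}$ and $\int_{\Sigma}(|X|^2-2n)e^{-|X|^2/4}d\mu=0$ in Proposition \ref{p2}, which express that a self-shrinker is already balanced against translations of the center $x$ and dilations of the scale $t$. Thus the genuine work is hidden in Proposition \ref{p2} (established as in \cite{CM}); granting it, the equivalence follows at once from inspecting (\ref{l101}).
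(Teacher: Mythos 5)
Your proposal is correct and takes essentially the same route as the paper, which proves this proposition in one line by ``combining (\ref{l101}), (\ref{p001}) and (\ref{p201})'' --- exactly the three ingredients you use. Your write-up merely supplies the details the paper leaves implicit: the dilation reduction with $\tilde H=\sqrt{t_0}\,H$, the term-by-term vanishing via Proposition \ref{p2}, and the fundamental lemma argument for the converse.
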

\subsection{The general second variation formula of $F$}
\begin{thm} \label{thm1}
Let $\Sigma$ be an $n$-dimensional complete manifold without
boundary which has polynomial volume growth. Suppose that
$\Sigma_{s}$ is a normal variation of $\Sigma$, $x_{s}$, $t_{s}$ are
variations of $x_0$ and $t_0$, and
\begin{equation}
\frac{\partial\Sigma_s}{\partial s}=V,\quad \frac{\partial x_s}{\partial s}=y,
\quad \frac{\partial t_s}{\partial s}=\tau, \quad
\frac{\partial^2x_s}{\partial s^2}=y',
\quad and \quad\frac{\partial^2t_s}{\partial s^2}=\tau', \notag
\end{equation}
where $V$ has compact support. Then
\begin{align}
&\frac{\partial^2F}{\partial s^2}(\Sigma,x_0,t_0) \notag \\
=&\frac{1}{\sqrt{4\pi t_0}^n}\int_{\Sigma}e^{-\frac{|X-x_0|^2}{4t_0}}\Big\{
-\langle V,L^{\perp}_{x_0,t_0}V\rangle+\frac{\langle X-x_0,V\rangle}{t_0^2}\tau+\frac{\langle V,y\rangle}{t_0}
\notag \\
&-\frac{(|X-x_0|^2-nt_0)\tau^2}{2t_0^3}-\frac{|y|^2}{2t_0}-\frac{\tau\langle X-x_0,y\rangle}{t_0^2}\notag \\
&+\left(-\langle V,H+\frac{X-x_0}{2t_0}\rangle+\tau(\frac{|X-x_0|^2}{4t_0^2}-\frac{n}{2t_0})+
\langle\frac{X-x_0}{2t_0},y\rangle\right)^2\notag \\
&-\langle\overline{\nabla}_V^{\perp}V,H+\frac{X-x_0}{2t_0}\rangle+\tau'(\frac{|X-x_0|^2}{4t_0}-\frac{n}{2t_0})
+\frac{\langle X-x_0,y'\rangle}{2t_0} \Big\} d\mu, \label{t101}
\end{align}
where $L_{x_0,t_0}^{\perp}V=\Delta^{\perp}V+\langle A_{ij},V\rangle
g^{ki}g^{jl}A_{kl}+\frac{V}{2t_0}
-\frac{1}{2t_0}\nabla^{\perp}_{(X-x_0)^{\top}}V$ and $A_{ij}$ is the
second fundamental form as definition in (\ref{201}).
\end{thm}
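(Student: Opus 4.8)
The plan is to compute the second variation by differentiating the first variation formula \eqref{l101} once more in $s$, then evaluate at $s=0$ where $x_0$, $t_0$ are the base point and time. The first variation already isolates the integrand as a product of a weight $W_s \equiv \frac{1}{\sqrt{4\pi t_s}^n}e^{-|X_s-x_s|^2/4t_s}$ and a scalar factor $P_s \equiv -\langle V, H_s+\frac{X_s-x_s}{2t_s}\rangle + \tau(\frac{|X_s-x_s|^2}{4t_s^2}-\frac{n}{2t_s}) + \frac{\langle X_s-x_s,y\rangle}{2t_s}$, integrated against $d\mu_s$. Since $\frac{\partial}{\partial s}F = \int_{\Sigma_s} P_s\, W_s\, d\mu_s$, applying the product rule gives three groups of terms: $\int (\partial_s P_s) W_s\, d\mu$, $\int P_s (\partial_s W_s)\, d\mu$, and $\int P_s W_s (\partial_s d\mu)$. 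The key structural observation I would exploit is that the expression $\frac{\partial}{\partial s}\log(W_s\, d\mu)$ is exactly $P_s$ itself (this is what \eqref{l11} combined with the log-derivatives computed in the first variation theorem tell us), so the last two groups combine neatly: $\int P_s\,\partial_s(W_s\,d\mu) = \int P_s^2\, W_s\, d\mu$. That immediately produces the squared term $\left(-\langle V,H+\frac{X-x_0}{2t_0}\rangle + \tau(\dots) + \langle\frac{X-x_0}{2t_0},y\rangle\right)^2$ appearing in \eqref{t101}.

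The remaining and more delicate work is to differentiate $P_s$ itself, i.e. to compute $\partial_s P_s$. This breaks into differentiating each of the three summands of $P_s$. The genuinely hard piece is $\frac{\partial}{\partial s}\langle V, H_s\rangle$, since it requires the variation of the mean curvature vector under a normal variation with velocity $V$. First I would recall (or cite the standard formula for) the linearization $\frac{\partial}{\partial s}H_s$ along a normal variation, which produces the Jacobi-type operator: a normal Laplacian term $\Delta^\perp V$, a Simons-type curvature term $\langle A_{ij},V\rangle g^{ki}g^{jl}A_{kl}$, and lower-order pieces. I also need the velocity of the position vector, $\frac{\partial}{\partial s}X_s = V$ (this is the defining normal variation), to differentiate the $\frac{X_s-x_s}{2t_s}$, $|X_s-x_s|^2$, and $\langle X_s-x_s,y\rangle$ factors, together with $\partial_s x_s = y$, $\partial_s t_s = \tau$, $\partial_s^2 x_s = y'$, $\partial_s^2 t_s = \tau'$. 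Assembling the differentiated first summand with the correct signs should reconstruct the operator $L^\perp_{x_0,t_0}V$ defined in the statement, with the transport term $-\frac{1}{2t_0}\nabla^\perp_{(X-x_0)^\top}V$ arising precisely from the coupling between the curvature of the variation and the weight.

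**The main obstacle** I anticipate is bookkeeping the term $\frac{\partial}{\partial s}\langle V, H_s\rangle = \langle \overline\nabla^\perp_V V, H\rangle + \langle V, \partial_s H_s\rangle$, and in particular correctly identifying which contributions land in $-\langle V, L^\perp V\rangle$ versus which produce the separate cross-terms in $\tau$, $y$, $\tau'$, $y'$. The formula \eqref{t101} records a $-\langle \overline\nabla^\perp_V V, H + \frac{X-x_0}{2t_0}\rangle$ term, which is the acceleration-of-the-variation contribution (the analogue of a second-order term in the ambient direction), and this must be separated cleanly from the genuine Jacobi operator acting linearly on $V$. I would organize the computation by first grouping all terms that are quadratic-and-linear in $V$ alone (these must assemble into $-\langle V, L^\perp V\rangle$ plus the acceleration term), then all terms mixing $V$ with $\tau$ or $y$ (these give $\frac{\langle X-x_0,V\rangle}{t_0^2}\tau + \frac{\langle V,y\rangle}{t_0}$), then the pure $(\tau,y)$ terms (the $-\frac{(|X-x_0|^2-nt_0)\tau^2}{2t_0^3}$, $-\frac{|y|^2}{2t_0}$, and cross term), and finally the second-order $\tau'$, $y'$ pieces. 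Throughout, the completeness, polynomial volume growth, and compact support of $V$ justify all integrations by parts needed to move $\Delta^\perp$ off $V$ and to discard boundary terms, so no convergence subtlety intervenes; the difficulty is purely the careful algebraic reassembly, which is why I would verify the final grouping matches \eqref{t101} summand-by-summand rather than attempting a single consolidated computation.
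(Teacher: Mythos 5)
Your plan follows the paper's proof essentially step for step: differentiate the first variation formula, obtain the squared term from the observation that $\partial_s\log\bigl(W_s\,d\mu_s\bigr)=P_s$, invoke the standard linearization of $H_s$ under a normal variation for the $\Delta^{\perp}V$ and Simons terms, and split the acceleration $\overline{\nabla}_VV$ into normal and tangential parts, with the tangential part paired against $\frac{(X-x_0)^{\top}}{2t_0}$ producing the drift term $-\frac{1}{2t_0}\nabla^{\perp}_{(X-x_0)^{\top}}V$ — the paper makes this last step precise via the bracket identity (\ref{t13}), i.e. $\langle\overline{\nabla}^{\top}_VV,\frac{X-x_0}{2t_0}\rangle=-\langle V,\overline{\nabla}_{(\frac{X-x_0}{2t_0})^{\top}}V\rangle$, valid because $[V,(\frac{X-x_0}{2t_0})^{\top}]$ is tangent to $\Sigma$. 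One minor correction: no integration by parts is actually needed in this theorem ($\Delta^{\perp}$ remains on $V$ inside $-\langle V,L^{\perp}_{x_0,t_0}V\rangle$), so compact support and polynomial volume growth serve only to make the integrals and differentiation under the integral sign legitimate, not to discard boundary terms.
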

\begin{proof}
Apply one more derivative on equation (\ref{l101}), it gives
\begin{align}
&\frac{\partial^2F}{\partial s^2}(\Sigma,x_0,t_0) \notag \\
=&\frac{1}{\sqrt{4\pi t_0}^n}\int_{\Sigma}e^{-\frac{|X-x_0|}{4t_0}}
\Bigl\{-\langle V,\frac{\partial}{\partial s}(H_s+\frac{X_s-x_s}{2t_s})\Big|_{s=0}\rangle \notag\\
&+\tau\frac{\partial}{\partial s}(\frac{|X_s-x_s|^2}{4t_s^2}-\frac{n}{2t_s})\Big|_{s=0}
+\langle\frac{\partial}{\partial s}(\frac{X_s-x_s}{2t_s})\Big|_{s=0},y\rangle\notag \\
&+\left(-\langle V,H+\frac{X-x_0}{2t_0}\rangle+\tau(\frac{|X-x_0|^2}{4t_0^2}-\frac{n}{2t_0})+
\langle(\frac{X-x_0}{2t_0}),y\rangle\right)^2\notag \\
&-\langle V',(H+\frac{X-x_0}{2t_0})\rangle
+\tau'(\frac{|X-x_0|^2}{4t_0^2}-\frac{n}{2t_0})+
\langle(\frac{X-x_0}{2t_0}),y'\rangle\Bigr\}d\mu. \label{t11}
\end{align}
Similar to the derivation of the second variation formula for the
area, we have
\begin{align}
\langle(\frac{\partial H_s}{\partial s}), V\rangle
=\langle\Delta^{\perp}V+\langle
A_{ij},V\rangle g^{ki}g^{jl}A_{kl}, V\rangle. \label{t12}
\end{align}
On the other hand, since $[V,(\frac{X-x_0)}{2t_0})^{\top}]$ is
tangent to $\Sigma_s$, it follows that
\begin{align}
\langle\overline{\nabla}^{\top}_VV,\frac{X-x_0}{2t_0}\rangle
=-\langle V,\overline{\nabla}_V(\frac{X-x_0}{2t_0})^{\top}\rangle
=-\langle V,\overline{\nabla}_{(\frac{X-x_0}{2t_0})^{\top}}V\rangle.  \label{t13}
\end{align}
Using $\frac{\partial X_s}{\partial s}=V$, $\frac{\partial
t_s^{-1}}{\partial s}=-\tau t_s^{-2}$ and $\frac{\partial
x_s}{\partial s}=y$, we simplify
\begin{align}
&-\langle V,\frac{\partial}{\partial s}(H_s+\frac{X_s-x_s}{2t_s})\Big|_{s=0}\rangle
-\langle V',H+\frac{X-x_0}{2t_0}\rangle \notag \\
=&-\langle V,\frac{\partial H_s}{\partial s}\Big|_{s=0}\rangle
-\langle V,\frac{\partial}{\partial s}(\frac{X_s-x_s}{2t_s})\Big|_{s=0}\rangle
-\langle\overline{\nabla}^{\perp}_VV,H+\frac{X-x_0}{2t_0}\rangle
-\langle\overline{\nabla}^{\top}_VV,\frac{X-x_0}{2t_0}\rangle \notag \\
=&-\langle V,L^{\perp}_{x_0,t_0}V\rangle
-\langle\overline{\nabla}^{\perp}_VV,H+\frac{X-x_0}{2t_0}\rangle
+\langle V,\frac{y}{2t_0}\rangle+\frac{\tau}{2t_0^2}\langle V,X-x_0\rangle, \notag
\end{align}
where the second equality is from (\ref{t12}), (\ref{t13}), and the
definition of $L^{\perp}_{x_0,t_0}$. The second term in (\ref{t11})
is given by
\begin{align}
\frac{\partial}{\partial s}(\frac{|X_s-x_s|^2}{4t_s^2}-\frac{n}{2t_s})\Big|_{s=0}
&=\frac{\langle X-x_0,V-y\rangle}{2t_0^2}-\frac{\tau|X-x_0|^2}{2t_0^3}+\frac{n\tau}{2t_0^2} \notag \\
&=\frac{\langle X-x_0,V\rangle}{2t_0^2}-\frac{|X-x_0|^2-nt_0}{2t_0^3}
\tau-\frac{\langle X-x_0,y\rangle}{2t_0^2}. \notag
\end{align}
For the third term in (\ref{t11}), observe that
\begin{equation}
\langle\frac{\partial}{\partial s}(\frac{X_s-x_s}{2t_s})\Big|_{s=0},y\rangle
=\langle\frac{V}{2t_0},y\rangle-\frac{|y|^2}{2t_0}
-\frac{\tau}{2t_0^2}\langle X-x_0,y\rangle. \notag
\end{equation}
Combining these gives the theorem.
\end{proof}
\subsection{The second variation formula at a critical point}
For convenience, from now on we denote $D^2_{(V,y,\tau)}F$ as
$\frac{\partial^2F}{\partial s^2}(\Sigma,0,1)$ in (\ref{t101}). When
$(\Sigma,0,1)$ is a critical point of $F$, we have
$H=-\frac{X^{\perp}}{2}$, the second variation formula of $F$ at the
point can be simplified as the following equation (\ref{thm2}).
\begin{thm}
Let $\Sigma$ be a complete manifold without boundary which has
polynomial volume growth. Suppose that  $\Sigma_{s}$ is a normal
variation of $\Sigma$, $x_{s}$, $t_{s}$ are variations of $x_0=0$
and $t_0=1$, and
\begin{equation}
\frac{\partial \Sigma_{s}}{\partial s}\Big|_{s=0}=V,\quad
\frac{\partial x_{s}}{\partial s}\Big|_{s=0}=y,
\quad \frac{\partial t_{s}}{\partial s}\Big|_{s=0}=\tau,\notag
\end{equation}
where $V$ has compact support. If $(\Sigma,0,1)$ is a critical point
of $F$, then
\begin{align}
    &D^2_{(V,y,\tau)}F \notag\\
=&\frac{1}{\sqrt{4\pi}^{n}}
   \int_{\Sigma}\Bigl(-\langle V,L^{\perp}V\rangle
    -2\tau\langle H,V\rangle
    -\tau^2|H|^2+\langle V,y\rangle
    -\frac{1}{2}|y^{\perp}|^2\Bigr)
    e^{-\frac{|X|^2}{4}}d\mu. \label{thm2}
\end{align}
Here the operator $L^{\perp}=L_{0,1}^{\perp}$, and
\begin{equation}\label{L}
L^{\perp}V=\Delta^{\perp}V+\langle A_{ij},V\rangle g^{ki}g^{jl}A_{kl}+\frac{V}{2}
-\frac{1}{2}\nabla^{\perp}_{X^{\top}}V.
\end{equation}
\end{thm}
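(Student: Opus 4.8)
The plan is to specialize the general formula (\ref{t101}) to $x_0=0$, $t_0=1$ and then substitute the critical-point relation $H=-\frac{X^\perp}{2}$. The first observation is that every term in (\ref{t101}) pairing a normal vector against $H+\frac{X}{2}$ must vanish: since $V$ is a normal variation and $\overline{\nabla}^\perp_VV$ is normal, and since $X=X^\top+X^\perp$ with $H+\frac{X^\perp}{2}=\overrightarrow{0}$ at a critical point, we get $\langle V,H+\frac{X}{2}\rangle=\langle V,H+\frac{X^\perp}{2}\rangle=0$ and likewise $\langle\overline{\nabla}^\perp_VV,H+\frac{X}{2}\rangle=0$. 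In particular the perfect-square term collapses to $\bigl(\tau(\frac{|X|^2}{4}-\frac{n}{2})+\frac12\langle X,y\rangle\bigr)^2$, and the only surviving $V$-dependence outside $-\langle V,L^\perp V\rangle$ is the linear pair $\langle X,V\rangle\tau+\langle V,y\rangle$; using $X^\perp=-2H$ on the normal field $V$ turns $\langle X,V\rangle\tau$ into $-2\tau\langle H,V\rangle$, which already produces the $-2\tau\langle H,V\rangle$ and $\langle V,y\rangle$ terms of the claimed formula.

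After this reduction I would expand the square and group the integrand by its dependence on the parameters $\tau$, $y$, $\tau'$, $y'$, integrating each group against $e^{-\frac{|X|^2}{4}}$ and invoking Proposition~\ref{p2}. The second-order data drop out immediately: $\int_\Sigma\tau'(\frac{|X|^2}{4}-\frac{n}{2})e^{-\frac{|X|^2}{4}}d\mu=0$ by the identity $\int_\Sigma(|X|^2-2n)e^{-\frac{|X|^2}{4}}d\mu=0$, and $\int_\Sigma\frac12\langle X,y'\rangle e^{-\frac{|X|^2}{4}}d\mu=0$ since $\int_\Sigma Xe^{-\frac{|X|^2}{4}}d\mu=\overrightarrow{0}$. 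The mixed $\tau$--$y$ contributions assemble into $\tau\bigl\langle\int_\Sigma X(\frac{|X|^2}{4}-\frac{n}{2}-1)e^{-\frac{|X|^2}{4}}d\mu,\,y\bigr\rangle$, which vanishes because both $\int_\Sigma Xe^{-\frac{|X|^2}{4}}d\mu$ and $\int_\Sigma X|X|^2e^{-\frac{|X|^2}{4}}d\mu$ are zero. For the purely quadratic-in-$y$ part, the identity $\int_\Sigma\langle X,y\rangle^2e^{-\frac{|X|^2}{4}}d\mu=2\int_\Sigma|y^\top|^2e^{-\frac{|X|^2}{4}}d\mu$ together with the pointwise splitting $|y|^2=|y^\top|^2+|y^\perp|^2$ reduces $-\frac{|y|^2}{2}+\frac14\langle X,y\rangle^2$ to exactly $-\frac12|y^\perp|^2$.

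The one genuinely delicate piece is the coefficient of $\tau^2$, which after expanding the square equals $-\frac{|X|^2-n}{2}+\frac{|X|^4}{16}-\frac{n|X|^2}{4}+\frac{n^2}{4}$, and I must show its weighted integral equals $-\int_\Sigma|H|^2e^{-\frac{|X|^2}{4}}d\mu$. Since $|H|^2=\frac{|X^\perp|^2}{4}$ at a critical point, this amounts to producing a fourth-moment identity. The key tool is a weighted integration by parts: because $\Sigma$ has polynomial volume growth there are no boundary terms, so $\int_\Sigma g\,\mathcal{L}f\,e^{-\frac{|X|^2}{4}}d\mu=-\int_\Sigma\langle\nabla f,\nabla g\rangle e^{-\frac{|X|^2}{4}}d\mu$. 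Taking $f=g=|X|^2$, using $\mathcal{L}|X|^2=2n-|X|^2$ from (\ref{p102}), $\nabla|X|^2=2X^\top$, and $|X^\top|^2=|X|^2-|X^\perp|^2$, I obtain $\int_\Sigma|X|^4e^{-\frac{|X|^2}{4}}d\mu=(2n+4)\int_\Sigma|X|^2e^{-\frac{|X|^2}{4}}d\mu-4\int_\Sigma|X^\perp|^2e^{-\frac{|X|^2}{4}}d\mu$; combining with $\int_\Sigma|X|^2e^{-\frac{|X|^2}{4}}d\mu=2n\int_\Sigma e^{-\frac{|X|^2}{4}}d\mu$ makes every pure power of $|X|^2$ cancel and leaves precisely $-\frac14\int_\Sigma|X^\perp|^2e^{-\frac{|X|^2}{4}}d\mu=-\int_\Sigma|H|^2e^{-\frac{|X|^2}{4}}d\mu$. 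Assembling the surviving pieces gives (\ref{thm2}). The main obstacle is exactly this fourth-moment cancellation, since it is the only step that requires more than Proposition~\ref{p2} and relies on the growth hypothesis to justify the weighted divergence theorem.
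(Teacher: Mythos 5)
Your proposal is correct and follows essentially the same route as the paper: specialize Theorem \ref{thm1} at $x_0=0$, $t_0=1$, kill the $V$-against-$(H+\frac{X}{2})$ terms pointwise via the critical-point equation, dispose of the $\tau'$, $y'$, mixed $\tau$--$y$, and quadratic-$y$ terms with Proposition \ref{p2}, and handle the $\tau^2$ term by the weighted integration by parts built on $\mathcal{L}|X|^2=2n-|X|^2$. Your fourth-moment identity is exactly the paper's step $\int\tau^2(\frac{|X|^2}{4}-\frac{n}{2})^2e^{-\frac{|X|^2}{4}}d\mu=\int\tau^2\frac{|X^{\top}|^2}{4}e^{-\frac{|X|^2}{4}}d\mu$ (the same integration by parts with $f=g=|X|^2$), just reorganized into moment notation.
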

\begin{proof}
Since $(\Sigma,0,1)$ is a critical point of $F$, by (\ref{l101}) we
have  that
\begin{equation}
H=-\frac{X^{\perp}}{2}. \label{t21}
\end{equation}
It follows from (\ref{p201}) that
\begin{equation}
\int_{\Sigma}Xe^{\frac{-|X|^2}{4}}=\overrightarrow{0}=\int_{\Sigma}X|X|^2e^{\frac{-|X|^2}{4}}
\quad \mbox{and} \quad
\int_{\Sigma}(|X|^2-2n)e^{\frac{-|X|^2}{4}}=0. \label{t22}
\end{equation}
Theorem \ref{thm1} (with $x_0=0$ and $t_0=1$) gives
\begin{align}
D^2_{(V,y,\tau)}F
=(4\pi)^{-\frac{n}{2}}\int_{\Sigma} \Bigl(&
-\langle V,L^{\perp}V\rangle+\tau\langle X,V\rangle+\langle V,y\rangle-\frac{(|X|^2-n)\tau^2}{2}
-\frac{|y|^2}{2} \notag \\
&-\tau\langle X,y\rangle+\{\tau(\frac{|X|^2}{4}-\frac{n}{2})+
\langle\frac{X}{2},y\rangle\}^2\Bigr)e^{-\frac{|X|^2}{4}}d\mu, \notag
\end{align}
where we use (\ref{t21}) and (\ref{t22}) to conclude the vanishing
of a few terms in (\ref{t101}). Note that $y$ is a constant vector
and $\tau$ is a constant. Squaring out the last term of
$D^2_{(V,y,\tau)}F$ and using (\ref{t21}) and (\ref{t22}) again
leads to
\begin{align}
D^2_{(V,y,\tau)}F
=&(4\pi)^{-\frac{n}{2}}\int_{\Sigma}\Bigl(-\langle
V,L^{\perp}V\rangle -2\tau\langle H,V\rangle+\langle V,y\rangle
-\frac{|y|^2}{2}\notag \\
&+\tau^2(\frac{|X|^2}{4}-\frac{n}{2})^2+\frac{1}{4}\langle X,y\rangle^2-\frac{(|X|^2-n)\tau^2}{2}\Bigr)
e^{-\frac{|X|^2}{4}}d\mu. \notag
\end{align}
Using the equality (\ref{p102}) and Stokes' theorem, we have that
\begin{align}
\int_{\Sigma}\tau^2(\frac{|X|^2}{4}-\frac{n}{2})^2e^{-\frac{|X|^2}{4}}d\mu
=\int_{\Sigma}\tau^2\frac{|X^{\top}|^2}{4}e^{-\frac{|X|^2}{4}}d\mu. \notag
\end{align}
Combining (\ref{p201}) and (\ref{p202}), the second variation
$D^2_{(V,y,\tau)}F$ can be further simplified as
\begin{equation}
    \frac{1}{\sqrt{4\pi}^{n}}
    \int_{\Sigma}\Bigl(-\langle V,L^{\perp}V\rangle
    -2\tau\langle H,V\rangle-\tau^2|H|^2+\langle V,y\rangle
    -\frac{1}{2}|y^{\perp}|^2\Bigr)
    e^{-\frac{|X|^2}{4}}d\mu. \notag
\end{equation}
\end{proof}
In \cite{CM}, Colding and Minicozzi defined the following concept.

\begin{defn}\label{d3}
{\rm A critical point $(\Sigma,0,1)$ of $F$ is} {\it $F$-stable}
{\rm if for every compactly supported smooth variation $\Sigma_s$
with $\Sigma_0=\Sigma$ and $\frac{\partial\Sigma_s}{\partial
s}\big|_{s=0}=V$, there exist variations $x_s$ of $0$ and $t_s$ of
$1$ such that $D^2_{(V,y,\tau)}F\geq0$, where $y=\frac{\partial
x_s}{\partial s}\big|_{s=0}$ and $\tau=\frac{\partial t_s}{\partial
s}\big|_{s=0}$.}
\end{defn}
\begin{rem}
When $\Sigma$ is fixed, i.e. $V=0$, from (\ref{thm2}), we can see
that the second variation formula of $F$ is nonpositive under any
variations of $x_s$ and $t_s$.
\end{rem}
\section{An equivalent condition for F-stability}
Starting from this section, we assume that $\Sigma$ satisfies
$H=-\frac{1}{2}X^{\perp}$.
\subsection{Vector-valued eigenfunctions and eigenvalues of $L^{\perp}$}
From equation (\ref{thm2}), the second order operator $L^{\perp}$ is
the important term of second variation of $F$. When $\Sigma$ is a
hypersurface with $h=\frac{\langle X,\textbf{n}\rangle}{2}$, Colding
and Minicozzi \cite{CM} showed that the mean curvature function $h$
and the translations $\langle y,\textbf{n}\rangle$ are
eigenfunctions of $L$ with eigenvalues 1 and $\frac{1}{2}$,
respectively. Here $y$ is a constant vector in $\mathbb{R}^{n+1}$,
$\textbf{n}$ is the outer unit normal vector of $\Sigma$, and
\begin{align}
Lf=\Delta f+|A|^2f+\frac{1}{2}f-\frac{1}{2}\langle X,\nabla f\rangle. \notag
\end{align}
This property can also be generalized to the higher co-dimensional
case.
\begin{pro}\label{p4}
Assume that $\Sigma\subset\mathbb{R}^m$ is a smooth submanifold
satisfying $H=-\frac{X^{\perp}}{2}$, then the mean curvature vector
$H$ and the normal part $y^{\perp}$ of a constant vector field $y$
are vector-valued eigenfunctions of $L^{\perp}$ with
\begin{equation}
L^{\perp}H=H \ \ and \ \ L^{\perp}y^{\perp}=\frac{1}{2}y^{\perp}, \label{p401}
\end{equation}
where $L^{\perp}$ is as in (\ref{L}). Moreover, if $\Sigma$ is
compact, then $L^{\perp}$ is self-adjoint in the weighted space
defined in Definition \ref{d1} and
\begin{align}\label{p30}
&-\int_{\Sigma}\langle V_1,L^{\perp}V_2\rangle e^{-\frac{|X|^2}{4}}d\mu \notag \\
&=\int_{\Sigma}\Bigl(\langle\nabla^{\perp}V_1,\nabla^{\perp}V_2\rangle
-\langle A_{ij},V_1\rangle \langle A_{kl},V_2\rangle g^{ik}g^{jl}
-\frac{1}{2}\langle V_1,V_2\rangle\Bigr)e^{-\frac{|X|^2}{4}}d\mu.
\end{align}
\end{pro}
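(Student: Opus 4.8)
The plan is to prove the three assertions in order, starting with the two eigenfunction identities and then establishing self-adjointness together with the integration-by-parts formula \eqref{p30}.

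\textbf{Step 1: The identity $L^{\perp}H=H$.} I would compute $\Delta^{\perp}H$, the Simons-type curvature term $\langle A_{ij},H\rangle g^{ki}g^{jl}A_{kl}$, and the drift term $-\tfrac12\nabla^{\perp}_{X^{\top}}H$ separately, using only the self-shrinker equation $H=-\tfrac12 X^{\perp}$. The natural approach is to differentiate this equation. Writing $X^{\perp}=X-X^{\top}$ and using the Gauss/Weingarten relations, one expresses $\nabla^{\perp}_{e_i}H$ in terms of $A$ and $X^{\top}$, and then $\Delta^{\perp}H$ in terms of $\nabla^{\perp}A$ contracted with $X$ and the quadratic curvature terms. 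The drift term $-\tfrac12\nabla^{\perp}_{X^{\top}}H$ should precisely cancel the $X$-dependent pieces arising from $\Delta^{\perp}H$, leaving $H + \langle A_{ij},H\rangle g^{ki}g^{jl}A_{kl}$ minus the same quadratic term, so that $L^{\perp}H = H$. This is the codimension-$\geq 2$ analogue of the hypersurface computation $LH=H$, and the key input is the codimension-free Simons identity for $\nabla^{\perp}_{e_i}\nabla^{\perp}_{e_j}H$; I expect this to be the most calculation-heavy step.

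\textbf{Step 2: The identity $L^{\perp}y^{\perp}=\tfrac12 y^{\perp}$.} Here $y$ is a fixed constant vector in $\mathbb R^m$, so $\overline{\nabla}y=0$. I would decompose $y=y^{\top}+y^{\perp}$ and compute $\nabla^{\perp}_{e_i}y^{\perp}=-A^{\alpha}_{ij}\langle y,e_{\alpha}\rangle e_j$-type tangential contributions via Weingarten, then $\Delta^{\perp}y^{\perp}$. The curvature term $\langle A_{ij},y^{\perp}\rangle g^{ki}g^{jl}A_{kl}$ should combine with $\Delta^{\perp}y^{\perp}$ so that the full second-order and curvature contributions collapse, and the drift term $-\tfrac12\nabla^{\perp}_{X^{\top}}y^{\perp}$ supplies the remaining piece, yielding the eigenvalue $\tfrac12$. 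The constancy of $y$ is what makes this cleaner than Step 1, since $\overline{\nabla}y=0$ removes any intrinsic derivative of the ambient field.

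\textbf{Step 3: Self-adjointness and formula \eqref{p30}.} For $\Sigma$ compact I would use the divergence form built into $\mathcal L f = e^{|X|^2/4}\operatorname{div}(e^{-|X|^2/4}\nabla f)$ from Proposition~\ref{p1}, now applied componentwise in the normal bundle with the connection $\nabla^{\perp}$. Writing $L^{\perp}V=\Delta^{\perp}V-\tfrac12\nabla^{\perp}_{X^{\top}}V+\langle A_{ij},V\rangle g^{ki}g^{jl}A_{kl}+\tfrac{V}{2}$, the first two terms together act as the weighted Bochner Laplacian $e^{|X|^2/4}\operatorname{div}^{\perp}(e^{-|X|^2/4}\nabla^{\perp}V)$, so integrating $\langle V_1, L^{\perp}V_2\rangle e^{-|X|^2/4}$ and applying the divergence theorem (no boundary term since $\partial\Sigma=\emptyset$) moves one normal derivative onto $V_1$, producing $-\langle\nabla^{\perp}V_1,\nabla^{\perp}V_2\rangle$. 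The curvature and zeroth-order terms contribute the remaining symmetric pieces $\langle A_{ij},V_1\rangle\langle A_{kl},V_2\rangle g^{ik}g^{jl}$ and $\tfrac12\langle V_1,V_2\rangle$, giving \eqref{p30}. Since the right-hand side is manifestly symmetric in $V_1,V_2$, self-adjointness of $L^{\perp}$ in $(P\Gamma(N\Sigma),\langle\cdot,\cdot\rangle_e)$ follows immediately. The main obstacle throughout is bookkeeping the normal-connection curvature terms correctly in codimension greater than one; once the weighted divergence structure is identified, the integration by parts itself is routine.
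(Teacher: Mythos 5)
Your proposal follows essentially the same route as the paper: both eigenfunction identities are obtained by differentiating the shrinker equation (respectively the constant vector $y$) twice in a normal frame at a point, using the Codazzi and Weingarten relations so that the drift term $-\tfrac12\nabla^{\perp}_{X^{\top}}$ cancels the $X$-dependent pieces and the quadratic curvature terms cancel against $\langle A_{ij},\cdot\rangle g^{ki}g^{jl}A_{kl}$, while (\ref{p30}) and self-adjointness follow from recognizing $\Delta^{\perp}-\tfrac12\nabla^{\perp}_{X^{\top}}$ as the weighted divergence-form operator and integrating by parts with no boundary term. One small correction to your Step 2: $\nabla^{\perp}_{e_i}y^{\perp}$ is normal-valued and equals $-\langle y,e_j\rangle A_{ij}$, whereas the expression $-A^{\alpha}_{ij}\langle y,e_{\alpha}\rangle e_j$ you wrote is the tangential (shape-operator) part of $\overline{\nabla}_{e_i}y^{\perp}$; it is the former that must be fed into the computation of $\Delta^{\perp}y^{\perp}$.
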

\begin{proof}
Fix $p\in\Sigma$ and choose an orthonormal frame $\{e_i\}$ such that
$\nabla_{e_i}e_j(p)=0$, $g_{ij}=\delta_{ij}$ in a neighborhood of
$p$. Using $H=-\frac{1}{2}X^{\perp}$, we have
\begin{align} \label{p3}
\nabla^{\perp}_{e_i}H=\nabla_{e_i}^{\perp}(-\frac{1}{2}X^{\perp})
=\frac{1}{2}\nabla^{\perp}_{e_i}(\langle X,e_j\rangle e_j-X)
=\frac{1}{2}\langle X,e_j\rangle A_{ij}.
\end{align}
In the second equality of (\ref{p3}), we used $X^{\top}=\langle
X,e_j\rangle e_j$. Taking another covariant derivative at $p$, it
gives
\begin{align}\label{p32}
\nabla^{\perp}_{e_k}\nabla^{\perp}_{e_i}H
&=\frac{1}{2}(\nabla_{e_k}\langle X,e_j\rangle)A_{ij}
+\frac{1}{2}\langle X,e_j\rangle\nabla^{\perp}_{e_k}A_{ij} \notag \\
&=\frac{1}{2}A_{ik}+\frac{1}{2}\langle X,A_{kj}\rangle A_{ij}
+\frac{1}{2}\langle X,e_j\rangle\nabla^{\perp}_{e_j}A_{ik},
\end{align}
where we used (\ref{p3}), $\nabla_{e_k}e_j(p)=0$, and the Codazzi
equation in the last equality. Taking the trace of (\ref{p32}) and
using $H=-\frac{1}{2}X^{\perp}$, we conclude that
\begin{align}
\Delta^{\perp}H
&=\frac{1}{2}H-\langle H,A_{ij}\rangle A_{ij}+\frac{1}{2}\nabla^{\perp}_{X^{\top}}H. \notag
\end{align}
Therefore,
\begin{align}
L^{\perp}H&=\Delta^{\perp}H+\langle A_{ij},H\rangle A_{ij}+\frac{1}{2}H
-\frac{1}{2}\nabla^{\perp}_{X^{\top}}H=H. \notag
\end{align}
For a constant vector $y$ in $\mathbb{R}^m$, the covariant
derivative of $y^{\perp}$ is
\begin{align}\label{p34}
\nabla^{\perp}_{e_i}y^{\perp}
=\nabla^{\perp}_{e_i}(y-\langle y,e_j\rangle e_j)
=-\langle y,e_j\rangle A_{ij}.
\end{align}
Taking another covariant derivative at $p$, it gives
\begin{align}\label{p35}
\nabla^{\perp}_{e_k}\nabla^{\perp}_{e_i}y^{\perp}
&=-(\nabla_{e_k}\langle y,e_j\rangle)A_{ij}-\langle y,e_j\rangle\nabla^{\perp}_{e_k}A_{ij} \notag \\
&=-\langle y,A_{kj}\rangle A_{ij}-\langle y,e_j\rangle \nabla^{\perp}_{e_j}A_{ki},
\end{align}
by $\nabla_{e_k}e_j(p)=0$ and the Codazzi equation. Taking the trace
of (\ref{p35}) and using (\ref{p3}), (\ref{p34}), we conclude that
\begin{align}
\Delta^{\perp}y^{\perp}&=-\langle y,A_{ij}\rangle A_{ij}-\langle y,e_j\rangle\nabla^{\perp}_{e_j}H \notag \\
&=-\langle y^{\perp},A_{ij}\rangle A_{ij}-\frac{1}{2}\langle y,e_j\rangle\langle X,e_i\rangle A_{ij} \notag \\
&=-\langle y^{\perp},A_{ij}\rangle A_{ij}+\frac{1}{2}\langle X,e_i\rangle\nabla^{\perp}_{e_i}y^{\perp} \notag \\
&=-\langle y^{\perp},A_{ij}\rangle A_{ij}+\frac{1}{2}\nabla^{\perp}_{X^{\top}}y^{\perp}. \notag
\end{align}
Therefore,
\begin{align}
L^{\perp}y^{\perp}=\Delta^{\perp}y^{\perp}+\langle A_{ij},y^{\perp}\rangle A_{ij}+\frac{1}{2}y^{\perp}
-\frac{1}{2}\nabla^{\perp}_{X^{\top}}y^{\perp}=\frac{1}{2}y^{\perp}. \notag
\end{align}
The equation (\ref{p30}) follows from the divergence theorem that
\begin{align}
\int_{\Sigma} \langle\Delta^{\perp}V_1,V_2\rangle e^{-\frac{|X|^2}{4}}d\mu
+\int_{\Sigma} \langle\nabla^{\perp}V_1,\nabla^{\perp}V_2\rangle e^{-\frac{|X|^2}{4}}d\mu
-\frac{1}{2}\int_{\Sigma}\langle\nabla_{X^{\top}}^{\perp}V_1,V_2\rangle e^{-\frac{|X|^2}{4}}d\mu=0. \notag
\end{align}
\end{proof}
In the case that $\Sigma$ is compact, since the operator $L^{\perp}$
is self-adjoint in the weighted inner product space with respect to
$\Sigma$ and $L^{\perp}H=H$,
$L^{\perp}y^{\perp}=\frac{1}{2}y^{\perp}$, we have
\begin{align}
\langle H,y^{\perp}\rangle_e=\langle L^{\perp}H,y^{\perp}\rangle_e
=\langle H,L^{\perp}y^{\perp}\rangle_e=\frac{1}{2}\langle H,y^{\perp}\rangle_e. \notag
\end{align}
Hence $\langle H,y^{\perp}\rangle_e=0$, for any constant vector $y$.
Since $\langle H,y^{\perp}\rangle_e=\langle H,y\rangle_e$, it gives
\begin{align}
\int_{\Sigma}He^{-\frac{|X|^2}{4}}d\mu=\overrightarrow{0}. \label{p41}
\end{align}
When $\Sigma$ is complete, (\ref{p41}) is still true provided that
$\Sigma$ has polynomial volume growth and the second fundamental
form of $\Sigma$ is of polynomial growth.
\subsection{An equivalent condition}
In the following theorems, we give an equivalent condition for
$F(\Sigma,0,1)$ to be stable. It is inspired by the proof of Lemma
4.23 of Colding and Minicozzi in \cite{CM}.

\begin{thm}\label{thm3}
Suppose $\Sigma \subset \mathbb{R}^m$ is an $n$-dimensional smooth
closed self-shrinker, $H=-\frac{X^{\perp}}{2}$. The following
statements are equivalent:

(i) $\Sigma$ is F-stable.

(ii) $\int_{\Sigma}\langle V,-L^{\perp}V\rangle
e^{-\frac{|X|^2}{4}}d\mu\geq0$ for any smooth normal vector field
$V$ which satisfies
\begin{align} \label{t301}
\int_{\Sigma}\langle V,H\rangle e^{-\frac{|X|^2}{4}}d\mu=0 \quad\mbox{and}\quad
\int_{\Sigma}Ve^{-\frac{|X|^2}{4}}d\mu=\overrightarrow{0}.
\end{align}
\end{thm}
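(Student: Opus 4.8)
The plan is to prove the equivalence by analyzing the second variation formula (\ref{thm2}) directly. Recall that from Theorem \ref{thm1} applied at the critical point, we have
\begin{align}
D^2_{(V,y,\tau)}F=\frac{1}{\sqrt{4\pi}^{n}}\int_{\Sigma}\Bigl(-\langle V,L^{\perp}V\rangle-2\tau\langle H,V\rangle-\tau^2|H|^2+\langle V,y\rangle-\frac{1}{2}|y^{\perp}|^2\Bigr)e^{-\frac{|X|^2}{4}}d\mu. \notag
\end{align}
$F$-stability requires that for every compactly supported $V$ there exist \emph{some} choice of the parameters $y$ and $\tau$ making this quantity nonnegative. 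So the natural first step is to decouple the problem: for a fixed $V$, view $D^2_{(V,y,\tau)}F$ as a function of the finite-dimensional variables $(y,\tau)\in\mathbb{R}^m\times\mathbb{R}$ and maximize over them. This is a concave quadratic in $(y,\tau)$ (the $\tau^2$ and $|y^{\perp}|^2$ terms enter with negative coefficients, using $\int_\Sigma|H|^2 e^{-|X|^2/4}d\mu>0$ for nontrivial shrinkers), so the maximum is attained at a critical point and can be computed explicitly.

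For the implication (ii) $\Rightarrow$ (i): given an arbitrary compactly supported $V$, I would project out its ``bad'' components. Writing $\bar V = V - P(V)$, where $P(V)$ is the piece of $V$ lying in the span determined by the constraints in (\ref{t301}), one arranges that $\bar V$ satisfies both orthogonality conditions $\langle V,H\rangle_e$-type and $\int_\Sigma \bar V e^{-|X|^2/4}d\mu=\overrightarrow{0}$. Concretely, the two constraints correspond exactly to orthogonality (in the weighted inner product $\langle\cdot,\cdot\rangle_e$) against $H$ and against the normal projections $y^{\perp}$ of constant vectors, which by Proposition \ref{p4} are precisely the eigenfunctions of $L^{\perp}$ with eigenvalues $1$ and $\tfrac12$. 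The key identity (\ref{p41}), $\int_\Sigma H e^{-|X|^2/4}d\mu=\overrightarrow{0}$, guarantees these two subspaces interact compatibly. The plan is then to choose $\tau$ and $y$ so that the correction terms $-2\tau\langle H,V\rangle+\langle V,y\rangle$ exactly cancel the contributions of $P(V)$, reducing $D^2$ evaluated on the full $V$ (with the optimal $y,\tau$) to $\int_\Sigma\langle \bar V,-L^{\perp}\bar V\rangle_e$, which is $\geq 0$ by hypothesis (ii). Here I would use that $\bar V$ still has polynomial/compact growth so (ii) applies.

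For the converse (i) $\Rightarrow$ (ii): suppose $V$ already satisfies the two constraints in (\ref{t301}). Then $\langle H,V\rangle_e=0$ and $\int_\Sigma V e^{-|X|^2/4}d\mu=\overrightarrow 0$ force the linear terms $-2\tau\langle H,V\rangle_e$ and $\langle V,y\rangle_e$ to vanish for every choice of $y,\tau$. What remains is
\begin{align}
D^2_{(V,y,\tau)}F=\frac{1}{\sqrt{4\pi}^{n}}\int_{\Sigma}\Bigl(-\langle V,L^{\perp}V\rangle-\tau^2|H|^2-\frac{1}{2}|y^{\perp}|^2\Bigr)e^{-\frac{|X|^2}{4}}d\mu. \notag
\end{align}
Since the last two terms are manifestly $\leq 0$, the choice $(y,\tau)=(0,0)$ maximizes the right-hand side, and $F$-stability of $\Sigma$ then forces $\int_\Sigma\langle V,-L^{\perp}V\rangle_e\geq0$, which is (ii).

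I expect the main obstacle to be the bookkeeping in the (ii) $\Rightarrow$ (i) direction: one must verify that the optimizing values of $y$ and $\tau$ can genuinely be realized as derivatives $\frac{\partial x_s}{\partial s}$, $\frac{\partial t_s}{\partial s}$ of admissible variations (this is automatic since $y\in\mathbb{R}^m$, $\tau\in\mathbb{R}$ are free), and—more delicately—that subtracting $P(V)$ does not move $V$ outside the class of compactly supported fields in a way that breaks the argument. Since $H$ and $y^{\perp}$ need not have compact support, the projected field $\bar V$ will generally only have polynomial growth rather than compact support; reconciling this with the compact-support hypothesis on admissible variations is the subtle point, and is presumably where the completeness/polynomial-growth hypotheses of the companion Theorem \ref{t4} are needed. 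In the present closed case this difficulty disappears, so the argument should close cleanly.
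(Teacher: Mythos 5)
Your proposal is correct and follows essentially the same route as the paper: for (i) $\Rightarrow$ (ii) both arguments observe that the constraints (\ref{t301}) kill the linear terms in (\ref{thm2}) so that $D^2_{(V,y,\tau)}F\leq \frac{1}{\sqrt{4\pi}^n}\langle V,-L^{\perp}V\rangle_e$ for every $(y,\tau)$, and for (ii) $\Rightarrow$ (i) the paper performs exactly your projection, decomposing $V=aH+z^{\perp}+V_0$ with $V_0$ satisfying (\ref{t301}) (using Proposition \ref{p4} and (\ref{p41}) for weighted orthogonality of $H$ and $N_{tr}$) and then choosing $\tau=-a$, $y=z$, which makes $D^2_{(V,z,-a)}F=\frac{1}{\sqrt{4\pi}^n}\langle V_0,-L^{\perp}V_0\rangle_e\geq0$. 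Your closing remark is also accurate: on a closed $\Sigma$ the projected field $V_0$ is automatically smooth and globally defined, so no compact-support issue arises, exactly as in the paper.
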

\begin{proof}
$(i) \Rightarrow (ii)$

Assume the contrary that there is a smooth normal vector field $V$
satisfying (\ref{t301}) but with $\int_{\Sigma}\langle
V,-L^{\perp}V\rangle e^{\frac{-|X|^2}{4}}d\mu<0$. For any real value
$\tau$ and constant vector $y$ in $\mathbb{R}^m$, using
(\ref{thm2}), we have
\begin{align}
    &D^2_{(V,y,\tau)}F \notag \\
    =&\frac{1}{\sqrt{4\pi}^{n}}\int_{\Sigma}\left(
    -\langle V,L^{\perp}V\rangle-2\tau\langle H,V\rangle
    -\tau^2|H|^2+\langle V,y\rangle-\frac{1}{2}|y^{\perp}|^2\right)e^{-\frac{|X|^2}{4}}d\mu \notag \\
    =&\frac{1}{\sqrt{4\pi}^{n}}\int_{\Sigma}\left(-\langle V,L^{\perp}V\rangle-\tau^2|H|^2-
    \frac{1}{2}|y^{\perp}|^2\right)
    e^{-\frac{|X|^2}{4}}d\mu \notag\\
    <&0, \notag
\end{align}
where the second equality follows from the conditions (\ref{t301}).
This contradicts the stability of $F$.

$(ii) \Rightarrow (i)$

The space
\begin{align}
N_{tr}=\{y^{\perp}|y \in \mathbb{R}^{m}\} \notag
\end{align}
is a Hilbert space with the weighted inner product that is spanned
by $E_1^{\perp},...,E_m^{\perp}$ where $\{E_i\}$ is the standard
basis in $\mathbb R^m$. Given a smooth normal vector field $V$, it
can be decomposed as $aH+z^{\perp}+V_0$. Here $aH$ and $z^{\perp}$
are the projections of $V$ to $H$ and $N_{tr}$, respectively. Note
that $V_0$ is a smooth normal vector field satisfying (\ref{t301}).
For any real value $\tau$ and constant vector $y\in\mathbb{R}^m$, by
plugging the decomposition of $V$ into (\ref{thm2}), we have
\begin{align}
&D^2_{(V,y,\tau)}F \notag \\
=&\frac{1}{\sqrt{4\pi}^{n}}\int_{\Sigma}\left(
    -\langle V,L^{\perp}V\rangle-2\tau\langle H,V\rangle
    -\tau^2|H|^2+\langle V,y\rangle-\frac{1}{2}|y^{\perp}|^2\right)e^{-\frac{|X|^2}{4}}d\mu \notag  \\
   =&\frac{1}{\sqrt{4\pi}^{n}}\int_{\Sigma}\Big(-a^2|H|^2-\frac{1}{2}|z^{\perp}|^2-
   \langle V_0,L^{\perp}V_0\rangle-2\tau a|H|^2
-\tau^2|H|^2 \notag \\
&+\langle z^{\perp},y^{\perp}\rangle-\frac{1}{2}|y^{\perp}|^2\Big)e^{-\frac{|X|^2}{4}}d\mu  \notag \\
\geq &\frac{1}{\sqrt{4\pi}^{n}}\int_{\Sigma}\left(-|H|^2(a+\tau)^2-\frac{1}{2}|z^{\perp}-y^{\perp}|^2\right)
e^{-\frac{|X|^2}{4}}d\mu, \notag
\end{align}
 where the condition (ii) is used in the last inequality. Choosing $\tau=-a$ and
$y=z$, it gives $D^2_{(V,z,-a)}F\geq0$. That is, $\Sigma$ is
$F$-stable.
\end{proof}
For the complete case, we define
\begin{align}
S_{\Sigma}=\{V\in N\Sigma\big|\ \ |V|(X) \mbox{ and } |\nabla^{\perp}V|(X)
\mbox{ are of polynomial growth } \}. \notag
\end{align}
Note that $V\in S_{\Sigma}$ might not be of compact support. We can
also find the following equivalent condition for the stability of
$F$ in the complete case.
\begin{thm}\label{t4}
Let $\Sigma \subset \mathbb{R}^m$ be an $n$-dimensional smooth
complete self-shrinker, $H=-\frac{X^{\perp}}{2}$, without boundary.
Suppose that the second fundamental form $A$ of $\Sigma$ is of
polynomial growth and $\Sigma$ has polynomial volume growth. The
following statements are equivalent:

(i) $\Sigma$ is $F$-stable.

(ii) $\int_{\Sigma}\langle V,-L^{\perp}V\rangle
e^{-\frac{|X|^2}{4}}d\mu\geq0$ for any smooth normal vector field
$V$ in $S_{\Sigma}$ which satisfies
\begin{align}
\int_{\Sigma}\langle V,H\rangle e^{-\frac{|X|^2}{4}}d\mu=0 \quad \mbox{and} \quad
\int_{\Sigma}Ve^{-\frac{|X|^2}{4}}d\mu=\overrightarrow{0}. \notag
\end{align}
\end{thm}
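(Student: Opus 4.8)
The plan is to run the two implications of Theorem~\ref{thm3} essentially verbatim, the one genuinely new feature being that a field in $S_{\Sigma}$ need not be compactly supported, so every integration by parts and every orthogonal projection must be justified by a cut-off argument in which the Gaussian weight $e^{-\frac{|X|^2}{4}}$ is used to dominate the polynomial growth of $V$, $\nabla^{\perp}V$, and $A$. As preliminaries I would first record that $H\in S_{\Sigma}$ and that each translation field $y^{\perp}\in S_{\Sigma}$: indeed $|H|=\tfrac12|X^{\perp}|\le\tfrac12|X|$ and $|\nabla^{\perp}H|\le\tfrac12|X|\,|A|$ by (\ref{p3}), while $|y^{\perp}|\le|y|$ and $|\nabla^{\perp}y^{\perp}|\le|y|\,|A|$ by (\ref{p34}); under the polynomial-growth hypotheses on $A$ and on the volume, the weighted integrals $\langle V,W\rangle_e$, $\int_{\Sigma}|\nabla^{\perp}V|^2e^{-\frac{|X|^2}{4}}d\mu$, and $\int_{\Sigma}|\langle A,V\rangle|^2e^{-\frac{|X|^2}{4}}d\mu$ all converge for $V,W\in S_{\Sigma}$. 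The key technical step is then to extend the Green identity (\ref{p30}), and hence the self-adjointness of $L^{\perp}$ together with the eigen-relations (\ref{p401}) and the orthogonality (\ref{p41}), from compactly supported fields to all of $S_{\Sigma}$: choosing cut-offs $\phi_r$ equal to $1$ on $B_r(0)$, supported in $B_{2r}(0)$ with $|\nabla\phi_r|\le C/r$, the boundary terms produced by the divergence theorem are supported in the annulus $B_{2r}(0)\setminus B_r(0)$ and their weighted integrals tend to $0$ as $r\to\infty$.

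For $(i)\Rightarrow(ii)$ I would argue by contradiction: suppose some $V\in S_{\Sigma}$ satisfies the constraints (\ref{t301}) yet $\langle V,-L^{\perp}V\rangle_e<0$. Setting $V_r=\phi_rV$, which is compactly supported, the convergence above gives $\langle V_r,-L^{\perp}V_r\rangle_e\to\langle V,-L^{\perp}V\rangle_e<0$, while the constraint quantities $b_r=\int_{\Sigma}\langle H,V_r\rangle e^{-\frac{|X|^2}{4}}d\mu$ and $d_r=\int_{\Sigma}V_re^{-\frac{|X|^2}{4}}d\mu$ tend to $0$ by (\ref{t301}). Plugging $V_r$ into (\ref{thm2}) and maximizing over the real $\tau$ and the constant vector $y$ — the $\tau$-maximum contributing $b_r^2/\langle H,H\rangle_e$, and the $y$-maximum a quantity controlled by $|d_r|^2$, since $\langle d_r,y\rangle=\int_{\Sigma}\langle V_r,y^{\perp}\rangle e^{-\frac{|X|^2}{4}}d\mu$ depends only on $y^{\perp}$ so that $d_r$ is automatically orthogonal to the kernel of $y\mapsto\int_{\Sigma}|y^{\perp}|^2e^{-\frac{|X|^2}{4}}d\mu$ — one finds $\sup_{y,\tau}D^2_{(V_r,y,\tau)}F\to\frac{1}{\sqrt{4\pi}^{n}}\langle V,-L^{\perp}V\rangle_e<0$ (the case $\langle H,H\rangle_e=0$ being handled identically since then $b_r\equiv0$). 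Hence for $r$ large no choice of $(y,\tau)$ makes $D^2_{(V_r,y,\tau)}F\ge0$, contradicting Definition~\ref{d3}.

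For $(ii)\Rightarrow(i)$ the algebra is identical to the closed case. Given a compactly supported variation field $V$, the inner products $\langle V,H\rangle_e$ and $\langle V,E_i^{\perp}\rangle_e$ converge, so I decompose $V=aH+z^{\perp}+V_0$ with $aH$ and $z^{\perp}$ the projections onto $\mathrm{span}(H)$ and the translation space $N_{tr}=\{y^{\perp}:y\in\mathbb R^m\}$, and $V_0$ satisfying (\ref{t301}); the point is that $V_0=V-aH-z^{\perp}\in S_{\Sigma}$, so hypothesis (ii) applies to it and yields $\langle V_0,-L^{\perp}V_0\rangle_e\ge0$. Substituting the decomposition into (\ref{thm2}) and using the self-adjointness of $L^{\perp}$, the eigen-relations (\ref{p401}), and the orthogonalities $\langle H,z^{\perp}\rangle_e=\langle H,V_0\rangle_e=\langle z^{\perp},V_0\rangle_e=0$, the quadratic form collapses, after completing the squares exactly as in Theorem~\ref{thm3}, to $\frac{1}{\sqrt{4\pi}^{n}}\bigl(\langle V_0,-L^{\perp}V_0\rangle_e-(a+\tau)^2\langle H,H\rangle_e-\tfrac12\langle(z-y)^{\perp},(z-y)^{\perp}\rangle_e\bigr)$; the choice $\tau=-a$ and $y=z$ then gives $D^2_{(V,z,-a)}F=\frac{1}{\sqrt{4\pi}^{n}}\langle V_0,-L^{\perp}V_0\rangle_e\ge0$, which is $F$-stability.

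The main obstacle, as indicated, is not the algebra — which is copied from Theorem~\ref{thm3} — but the analytic step of pushing the divergence-theorem identities (\ref{p30}), (\ref{p401}), and (\ref{p41}) past the lack of compact support. The polynomial growth of $V$, $\nabla^{\perp}V$, and $A$ together with the polynomial volume growth of $\Sigma$ must be combined to show that the Gaussian weight makes every integrand integrable and forces the annular cut-off error terms to vanish in the limit; verifying these decay estimates, and checking in the $(i)\Rightarrow(ii)$ step that $d_r$ remains orthogonal to the degenerate directions of the translation form so that the supremum over $y$ stays finite, are the points that require care.
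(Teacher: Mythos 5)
Your proposal is correct and follows essentially the same route as the paper: the $(i)\Rightarrow(ii)$ direction is proved by contradiction using compactly supported truncations $\psi_j V$ of a destabilizing $V\in S_{\Sigma}$, with dominated convergence (justified by the Gaussian weight against the polynomial growth of $V$, $\nabla^{\perp}V$, $A$ and the volume) showing that the quadratic form and the constraint integrals converge, after which one checks that $D^2_{(V_j,y,\tau)}F<0$ for \emph{all} $(y,\tau)$; and the $(ii)\Rightarrow(i)$ direction uses the decomposition $V=aH+z^{\perp}+V_0$ with $V_0\in S_{\Sigma}$ and the square-completion of Theorem~\ref{thm3}. The only cosmetic differences are that the paper bounds the $(y,\tau)$-dependence by an explicit $\epsilon$-argument with completed squares rather than computing the supremum via the kernel of the translation form, and uses intrinsic-distance cutoffs with $|\phi_j'|\le 1$ rather than annular extrinsic ones.
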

\begin{rem}
When $V\in S_{\Sigma}$, $A$ is of polynomial growth, and $\Sigma$
has polynomial volume growth, the integral
$\int_{\Sigma}\left(|\nabla^{\perp}V|^2-|\langle
A,V\rangle|^2-\frac{1}{2}|V|^2\right)e^{-\frac{|X|^2}{4}}d\mu$ is
finite. By divergence theorem, we have
\begin{align}
\langle V,-L^{\perp}V\rangle_e=\int_{\Sigma}\left(|\nabla^{\perp}V|^2-|\langle
A,V\rangle|^2-\frac{1}{2}|V|^2\right)e^{-\frac{|X|^2}{4}}d\mu. \label{r2}
\end{align}
\end{rem}
\begin{proof}[Proof of Theorem \ref{t4}]
$(i) \Rightarrow (ii)$

Assume the contrary that there is a smooth normal vector field $V$
in $S_{\Sigma}$ satisfying
\begin{align}\label{thm4}
\langle V,H\rangle_e=0, \quad
\int_{\Sigma}Ve^{\frac{-|X|^2}{4}}d\mu=\overrightarrow{0}, \quad
\mbox{and } \langle V,-L^{\perp}V\rangle_e<0.
\end{align}
Here $V$ may not have a compact support. For $j\in\mathbb{N}$,
consider smooth functions $\phi_j:\mathbb R^+\bigcup\{0\}\to\mathbb
R$ that satisfy $0\leq\phi_j\leq 1$, $\phi_j\equiv1$ on $[0,j)$,
$\phi_j\equiv0$ outside $[0,j+2)$ and $|\phi'_j|\leq1$. Define
cutoff functions $\psi_j(X)=\phi_j(\rho(X))$, $X\in\Sigma$, where
$\rho(X)$ is the distance function from a fixed point $p\in \Sigma$
to $X$ with respect to the metric $g_{ij}$. Let
$V_j(X)=\psi_j(X)V(X)$, then we have
\begin{align}
|\nabla^{\perp}V_j|^2
&=\underset{i=1}{\overset{n}{\sum}}|(\nabla_{e_i}\psi_j)V+\psi_j\nabla_{e_i}^{\perp}V|^2 \notag \\
&\leq 2|\nabla\psi_j|^2|V|^2+2|\psi_j|^2|\nabla^{\perp}V|^2 \notag \\
&\leq 2|V|^2+2|\nabla^{\perp}V|^2. \notag
\end{align}
Here $\{e_i\}$ is an orthonormal basis for $T_X\Sigma$. Using
(\ref{r2}), (\ref{thm4}), and the dominant convergence theorem, it
follows that
\begin{align}
\lim\limits_{j\to \infty}\langle V_j,-L^{\perp}V_j\rangle_e
=\langle V,-L^{\perp}V\rangle_e \mbox{ and}
\lim\limits_{j\to \infty}\langle V_j,H\rangle_e
=\lim\limits_{j\to\infty}\langle V_j,y^{\perp}\rangle_e=0. \notag
\end{align}
For any small positive $\epsilon$, choose a sufficiently large $j$
such that
\begin{align}
&\langle V_j,-L^{\perp}V_j\rangle_e<\frac{1}{2}\langle V,-L^{\perp}V\rangle_e<0,\notag \\
&|\langle V_j,H\rangle_e|<\epsilon |H|_e,\quad\mbox{and}\quad
\max\limits_{|y^{\perp}|_e=1}|\langle V_j,y^{\perp}\rangle_e|<\epsilon. \notag
\end{align}
For any real value $\tau$ and constant vector $y$ in $\mathbb R^m$,
we get
\begin{align}
    &D^2F_{(V_j,y,\tau)} \notag \\
    =&\frac{1}{\sqrt{4\pi}^{n}}\int_{\Sigma}\left(
    -\langle V_j,L^{\perp}V_j\rangle-2\tau\langle H,V_j\rangle
    -\tau^2|H|^2+\langle V_j,y^{\perp}\rangle-\frac{1}{2}|y^{\perp}|^2\right)
    e^{-\frac{|X|^2}{4}}d\mu \notag \\
    <&\frac{1}{\sqrt{4\pi}^{n}}\left(
    -\frac{1}{2}\langle V,L^{\perp}V\rangle_e
    +2\tau\epsilon|H|_e-\tau^2|H|^2_e+\epsilon|y^{\perp}|_e-\frac{1}{2}|y^{\perp}|_e^2
    \right) \notag\\
    =&\frac{1}{\sqrt{4\pi}^{n}}\left(
    -\frac{1}{2}\langle V,L^{\perp}V\rangle_e
    +\epsilon^2-(\tau|H|_e-\epsilon)^2+\frac{1}{2}\epsilon^2-\frac{1}{2}(|y^{\perp}|_e-\epsilon)^2
    \right). \notag
\end{align}
Choosing $\epsilon^2<\frac{1}{10}\langle V,L^{\perp}V\rangle_e$, we
get $D^2F_{(V_j,y,\tau)}<0$ for every $\tau$ and $y$. This
contradicts the stability of $F$.

$(ii) \Rightarrow (i)$

 A compactly supported smooth normal vector field $V$ can be
decomposed as $aH+z^{\perp}+V_0$, where $V_0,$ $H$, and $N_{tr}$ are
mutually orthogonal with respect to the weighted inner product.
Because $V$, $H$, and $z^{\perp}$ belong to $S_{\Sigma}$ and
$S_{\Sigma}$ is a linear vector space, $V_0$ belongs to
$S_{\Sigma}$, too. The remaining part of the proof is essentially
the same as the proof of $(ii)\Rightarrow(i)$ in Theorem \ref{thm3}.
\end{proof}
We immediately have the following corollaries.
\begin{cor}\label{c1}
Suppose $\Sigma_i^{n_i}\subset\mathbb R^{m_i}$, $i=1,2$, are smooth
closed self-shrinkers which satisfy $H_i=-\frac{X_i^{\perp}}{2}$,
where $X_i$ are the position vectors of $\Sigma_i$. Then
$\Sigma_1\times \Sigma_2\subset\mathbb R^{m_1+m_2}$ is a
self-shrinker and is F-unstable.
\end{cor}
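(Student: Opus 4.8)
The plan is to produce a single normal vector field on the product that violates condition (ii) of Theorem~\ref{thm3}, and hence conclude $F$-instability. Write the position vector as $X=(X_1,X_2)$. The first step is to record the product geometry: the tangent and normal bundles split as $T\Sigma=T\Sigma_1\oplus T\Sigma_2$ and $N\Sigma=N\Sigma_1\oplus N\Sigma_2$, the induced metric is block diagonal, and the second fundamental form is block diagonal with vanishing mixed terms, i.e. $A_{ij}=0$ whenever $e_i$ is tangent to $\Sigma_1$ and $e_j$ to $\Sigma_2$. From this, $H=(H_1,H_2)=-\tfrac12(X_1^{\perp},X_2^{\perp})=-\tfrac12 X^{\perp}$, so $\Sigma_1\times\Sigma_2$ is indeed a self-shrinker.

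Next I would show that $L^{\perp}$ acts diagonally on split fields. If $V_1\in N\Sigma_1$ depends only on the $\Sigma_1$ factor, then $\Delta^{\perp}$, the curvature term $\langle A_{ij},\cdot\rangle g^{ki}g^{jl}A_{kl}$, and $\nabla^{\perp}_{X^{\top}}$ all ignore the $\Sigma_2$ directions (the mixed $A_{ij}$ vanish and $X^{\top}=(X_1^{\top},X_2^{\top})$), so that $L^{\perp}(V_1,0)=(L^{\perp}_1 V_1,0)$, where $L^{\perp}_1$ is the corresponding operator of $\Sigma_1$. Applying Proposition~\ref{p4} to each factor then gives that $(H_1,0)$ and $(0,H_2)$ are eigenfields of $L^{\perp}$ with eigenvalue $1$.

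I would then take the candidate field $V=(aH_1,bH_2)$ for constants $a,b$ to be chosen, and check the constraints in (\ref{t301}). Since the weight and measure factor as $e^{-|X|^2/4}=e^{-|X_1|^2/4}e^{-|X_2|^2/4}$ and $d\mu=d\mu_1\,d\mu_2$, and since $\int_{\Sigma_i}H_i e^{-|X_i|^2/4}d\mu_i=\overrightarrow{0}$ by (\ref{p41}) on each closed factor, the constraint $\int_{\Sigma}Ve^{-|X|^2/4}d\mu=\overrightarrow{0}$ holds for every $a,b$. Writing $I_i=\int_{\Sigma}|H_i|^2 e^{-|X|^2/4}d\mu$, the remaining constraint $\langle V,H\rangle_e=aI_1+bI_2=0$ is arranged by the choice $(a,b)=(I_2,-I_1)$. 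Because $V$ is built from eigenfields of eigenvalue $1$, we have $-L^{\perp}V=-V$, whence $\langle V,-L^{\perp}V\rangle_e=-(a^2I_1+b^2I_2)=-I_1I_2(I_1+I_2)<0$, so condition (ii) of Theorem~\ref{thm3} fails and $\Sigma_1\times\Sigma_2$ is $F$-unstable.

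The points needing care are twofold. The one genuine computation is the diagonal splitting $L^{\perp}(V_1,0)=(L^{\perp}_1 V_1,0)$, which rests entirely on the vanishing of the mixed second fundamental form of a product; this is the main obstacle to make precise. Second, the strict inequality requires $I_1,I_2>0$, i.e. $H_i\not\equiv0$: this holds because a closed self-shrinker cannot be minimal, since $H_i\equiv0$ would force $X_i^{\perp}\equiv0$ and hence $\Delta|X_i|^2=2n_i-|X_i^{\perp}|^2=2n_i>0$, contradicting $\int_{\Sigma_i}\Delta|X_i|^2\,d\mu_i=0$ on a closed manifold.
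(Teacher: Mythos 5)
Your proposal is correct and follows essentially the same route as the paper: both apply Theorem~\ref{thm3} to the test field $V=(aH_1,bH_2)$, verify the constraints (\ref{t301}) via the product structure of the weight and (\ref{p41}) on each factor, and use the splitting of $L^{\perp}$ together with $L^{\perp}_iH_i=H_i$ from Proposition~\ref{p4} to get $\langle V,-L^{\perp}V\rangle_e=-\langle V,V\rangle_e<0$. Your two ``points needing care'' are also handled (more tersely) in the paper, which asserts the splitting of $L^{\perp}$ and notes $V\not\equiv 0$ because closed submanifolds of Euclidean space cannot be minimal, exactly the argument you supply.
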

\begin{proof}
The mean curvature $H$ of $\Sigma_1\times\Sigma_2$ is expressed as
$(H_1,H_2)\in\mathbb R^{m_1}\times \mathbb R^{m_2}$ and
$\Sigma_1\times \Sigma_2$ is a self-shrinker because
$H_1=-\frac{X_1^{\perp}}{2}$ and $H_2=-\frac{X_2^{\perp}}{2}$. To
prove this corollary, by Theorem \ref{thm3}, it suffices to
construct a smooth normal vector field $V$ such that (\ref{t301})
holds while $\int_{\Sigma}\langle V,-L^{\perp}V\rangle
e^{-\frac{|X|^2}{4}}d\mu<0$. Let $V=(aH_1,bH_2)$, where $a$ and $b$
would be chosen later. Note that $V$ is not vanish since $\Sigma_1$
and $\Sigma_2$ are closed submanifolds in Euclidean spaces. The
first integral in (\ref{t301}) is
\begin{align}
&\int_{\Sigma_1\times\Sigma_2}\langle V,H\rangle e^{-\frac{|X|^2}{4}}d\mu \notag \\
=&\int_{\Sigma_1}\int_{\Sigma_2}(a|H_1|^2+b|H_2|^2) e^{-\frac{|X_1|^2}{4}}
e^{-\frac{|X_2|^2}{4}}d\mu_2d\mu_1 \notag \\
=&a\int_{\Sigma_1}|H_1|^2e^{-\frac{|X_1|^2}{4}}d\mu_1\int_{\Sigma_2}e^{-\frac{|X_2|^2}{4}}d\mu_2
+b\int_{\Sigma_1}e^{-\frac{|X_1|^2}{4}}d\mu_1\int_{\Sigma_2}|H_2|^2e^{-\frac{|X_2|^2}{4}}d\mu_2. \notag
\end{align}
We can choose $a$ and $b$ to be nonzero constants such that
$\int_{\Sigma_1\times\Sigma_2}\langle V,H\rangle
e^{-\frac{|X^2|}{4}}d\mu=0$. The second integral
$\int_{\Sigma_1\times\Sigma_2}Ve^{-\frac{|X|^2}{4}}d\mu$ in
(\ref{t301}) is equal to $\overrightarrow{0}$ because of the
equation (\ref{p41}). The weighted inner product $\langle
V,-L^{\perp}V\rangle _e$ can be computed as
\begin{align}
&\int_{\Sigma_1\times\Sigma_2}\langle V,-L^{\perp}V\rangle e^{-\frac{|X|^2}{4}}d\mu \notag \\
=&\int_{\Sigma_1\times\Sigma_2}\langle (aH_1,bH_2),-(aH_1,bH_2)\rangle e^{-\frac{|X|^2}{4}}d\mu \notag \\
=&-a^2\int_{\Sigma_1}|H_1|^2e^{-\frac{|X_1|^2}{4}}d\mu_1\int_{\Sigma_2}e^{-\frac{|X_2|^2}{4}}d\mu_2
-b^2\int_{\Sigma_1}e^{-\frac{|X_1|^2}{4}}d\mu_1\int_{\Sigma_2}|H_2|^2e^{-\frac{|X_2|^2}{4}}d\mu_2 \notag \\
<&0. \notag
\end{align}
Here the first equality follows from the fact that $L^{\perp}$
splits to $L^{\perp}_1$ and $L^{\perp}_2$, and the equation
(\ref{p401}).
\end{proof}
\begin{cor} \label{c2}
Let $\Sigma_i^{n_i}\subset\mathbb R^{m_i}$, $i=1,2$, be two smooth
complete self-shrinkers without boundary which satisfy
$H_i=-\frac{X_i^{\perp}}{2}\neq0$, where $X_i$ is the position vector
of $\Sigma_i$. Suppose that each $\Sigma_i$ has polynomial volume
growth and the second fundamental form of each $\Sigma_i$ is of
polynomial growth. Then $\Sigma_1\times \Sigma_2\subset\mathbb
R^{m_1+m_2}$ is a self-shrinker and is F-unstable.
\end{cor}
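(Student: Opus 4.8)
The plan is to adapt the proof of Corollary \ref{c1} to the complete setting, using the equivalence from Theorem \ref{t4} in place of Theorem \ref{thm3}. First I would verify that $\Sigma_1 \times \Sigma_2$ is itself a self-shrinker: its position vector is $X = (X_1, X_2)$, its mean curvature vector splits as $H = (H_1, H_2)$, and the normal space at a point is the orthogonal direct sum of the two normal spaces, so $H = (H_1, H_2) = (-\tfrac{1}{2}X_1^{\perp}, -\tfrac{1}{2}X_2^{\perp}) = -\tfrac{1}{2}X^{\perp}$ follows directly from the two factor equations. I would also record that the elliptic operator $L^{\perp}$ on the product splits as $L^{\perp} = L^{\perp}_1 \oplus L^{\perp}_2$ on vector fields of the form $(V_1, V_2)$ with $V_i$ normal to $\Sigma_i$, since both the rough Laplacian $\Delta^{\perp}$, the curvature term $\langle A_{ij}, \cdot\rangle g^{ki}g^{jl}A_{kl}$, and the drift term $\nabla^{\perp}_{X^{\top}}$ respect the product decomposition.

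Next I would set up the test vector field $V = (aH_1, bH_2)$ exactly as in Corollary \ref{c1}, with constants $a, b$ to be chosen. The new ingredient relative to the closed case is that I must check $V \in S_{\Sigma}$, i.e.\ that $|V|$ and $|\nabla^{\perp}V|$ have polynomial growth. Since $|V|^2 = a^2|H_1|^2 + b^2|H_2|^2$ and $|H_i| = \tfrac{1}{2}|X_i^{\perp}| \leq \tfrac{1}{2}|X_i| \leq \tfrac{1}{2}|X|$, the field $|V|$ is clearly of polynomial growth; for $|\nabla^{\perp}V|$ I would invoke the hypothesis that each second fundamental form $A_i$ is of polynomial growth, together with the identity $\nabla^{\perp}_{e_i}H = \tfrac{1}{2}\langle X, e_j\rangle A_{ij}$ from (\ref{p3}), which bounds $|\nabla^{\perp}H_i|$ by a polynomial in $|X|$. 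This confirms $V$ is an admissible test field for condition (ii) of Theorem \ref{t4}.

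The remaining computations are the same Fubini-type factorizations as in Corollary \ref{c1}. Using $L^{\perp}H_i = H_i$ from Proposition \ref{p4} and the splitting of $L^{\perp}$, the weighted inner product becomes
\begin{align}
\langle V, -L^{\perp}V\rangle_e
&= -a^2\!\int_{\Sigma_1}\!|H_1|^2 e^{-\frac{|X_1|^2}{4}}d\mu_1 \!\int_{\Sigma_2}\! e^{-\frac{|X_2|^2}{4}}d\mu_2
- b^2\!\int_{\Sigma_1}\! e^{-\frac{|X_1|^2}{4}}d\mu_1 \!\int_{\Sigma_2}\!|H_2|^2 e^{-\frac{|X_2|^2}{4}}d\mu_2, \notag
\end{align}
which is strictly negative since $H_1, H_2 \neq 0$ and each factor integral is finite and positive by polynomial volume growth. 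The orthogonality constraint $\langle V, H\rangle_e = 0$ again reduces to a single linear equation in $a$ and $b$ with nonzero coefficients, so nonzero solutions exist, and $\int_{\Sigma}V e^{-|X|^2/4}d\mu = \overrightarrow{0}$ follows from the extension of (\ref{p41}) to the complete case (valid under our polynomial growth hypotheses, as noted after Proposition \ref{p4}). Thus $V$ satisfies both constraints of condition (ii) while violating the inequality, so by Theorem \ref{t4} the product is $F$-unstable.

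The main obstacle I anticipate is not the algebra but the finiteness and growth bookkeeping: I must ensure the noncompactness does not invalidate the separation of variables (each single-factor weighted integral of $|H_i|^2$ must be finite, which needs $|H_i|$ controlled against the Gaussian weight) and that $V$ genuinely lies in $S_{\Sigma}$ so that the divergence-theorem identity (\ref{r2}) applies and no boundary terms at infinity appear. The hypotheses of polynomial volume growth and polynomial growth of $A_i$ are precisely what is needed to discharge both concerns, so the argument goes through once these are verified.
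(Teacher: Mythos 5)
Your proposal is correct and is essentially the paper's own argument: the paper disposes of Corollary \ref{c2} in one line by invoking Theorem \ref{t4} and the proof of Corollary \ref{c1}, which is exactly your route — the test field $V=(aH_1,bH_2)$, the splitting of $L^{\perp}$, the Fubini factorizations, and the extension of (\ref{p41}) to the complete case. The extra bookkeeping you supply (membership $V\in S_{\Sigma}$ via (\ref{p3}) and the polynomial growth of $A_i$, plus finiteness of the single-factor weighted integrals) is precisely what the paper's word ``similar'' leaves implicit, not a different approach.
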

Using Theorem \ref{t4}, the proof of Corollary \ref{c2} is similar
to the proof of Corollary \ref{c1}.
\section{The unstability of Anciaux's examples}\label{Sec4}
\subsection{Anciaux's examples}
Let $\langle\langle\cdot,\cdot\rangle\rangle
=\overset{n}{\underset{i=1}{\sum}}dz_i\otimes d\overline{z}_i$ be
the standard Hermitian form on $\mathbb{C}^n$, where
$z_i=x_i+\sqrt{-1}y_i$, $i=1,...,n$ are the standard complex
coordinates. The standard Riemannian metric is
$\langle\cdot,\cdot\rangle=\mbox{Re}\langle\langle\cdot,\cdot\rangle\rangle
=\overset{n}{\underset{i=1}{\sum}}(dx_i^2+dy_i^2)$ and the
symplectic form is
$\omega(\cdot,\cdot)=-\mbox{Im}\langle\langle\cdot,\cdot\rangle\rangle
=\overset{n}{\underset{i=1}{\sum}}dx_i\wedge dy_i$. We have
$\omega(\cdot,\cdot)=\langle J\cdot,\cdot\rangle$, where $J$ is the
standard almost complex structure $J(\frac{\partial}{\partial
x_i})=\frac{\partial}{\partial y_i}$ and $J(\frac{\partial}{\partial
y_i})=-\frac{\partial}{\partial x_i}$.

Recall that an immersion $\psi $ from a manifold $M$ of dimension
$(n-1)$ into $\mathbb{S}^{2n-1}$\ is said to be $Legendrian$ if
$\alpha |_{\psi (M)}=0$ for the contact 1-form $\alpha (\cdot
)=\omega (X^M,\cdot)$, where $X^M$ is the position vector and
$\omega$ is the standard symplectic form on $\mathbb{C}^n$.
Moreover, $d\alpha =2\omega $ and $\langle Jy,z\rangle=\omega
(y,z)=\frac{1}{2}d\alpha (y,z)=0,$ $\langle JX^M,y\rangle=\omega
(X^M,y)=\alpha (y)=0$ for all $y,$ $z\in T\psi (M)$. It means that
$y$, $Jz$, $X^M$, and $JX^M$ are mutually orthogonal with respect to
the standard metric $g$ for any $y,$ $z\in T\psi (M)$. When $\psi$
is a minimal immersion, the complex scalar product $\gamma\psi$ of a
smooth regular curve $\gamma:I\to\mathbb C^*$ and $\psi$ is a
Lagrangian submanifold in $\mathbb C^n$, i.e.,
$\omega|_{\gamma\psi}\equiv0$. This was observed by Anciaux in
\cite{An}. Indeed, he proved by following Lemma.
\begin{lem}\cite{An} \label{l2}
Let $\psi : M\rightarrow \mathbb S^{2n-1}$ be a minimal Legendrian
immersion and $\gamma:I\rightarrow \mathbb C^{*}$ be a smooth
regular curve parameterized by the arclength $s$. Then the following
immersion
\begin{align}
\left.\begin{array}{ll}
\gamma \ast \psi :I\times M &\to \mathbb C^{n} \\
\ \ \ \ \ \ \ \ \ \ (s,\sigma)&\to\gamma (s) \psi (\sigma ) \notag
\end{array}\right.
\end{align}
is a Lagrangian. Moreover, $\gamma \ast \psi$ satisfies the
self-shrinker equation
\begin{align}
H+\frac{1}{2}(\gamma \ast \psi )^{\perp }=0 \notag
\end{align}
if and only if $\gamma$ satisfies the following system of ordinary
differential equations:
\begin{align}
\left\{\begin{array}{ll}
\ \ \ \ \ \ \ \ \ r'(s)&=\cos(\theta-\phi), \\
\theta'(s)-\phi'(s)&=(\frac{r}{2}-\frac{n}{r})\sin(\theta-\phi), \label{l201}
\end{array}\right.
\end{align}
where the curve $\gamma$ is denoted as $r(s)e^{i\phi(s)}$ and
$\theta$ is the angle of the tangent and the x-axis. From
(\ref{l201}), we have a conservation law
\begin{align}
r^ne^{-\frac{r^2}{4}}\sin(\theta-\phi)=E, \label{l202}
\end{align}
where $0< E\leq E_{max}=(\frac{2n}{e})^{n/2}$ is a constant
determined by the initial data $(r(s_0),\theta(s_0)-\phi(s_0))$.
\end{lem}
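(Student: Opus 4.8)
The plan is to exploit the product structure $F(s,\sigma)=\gamma(s)\psi(\sigma)$ together with the two defining features of a minimal Legendrian immersion, namely $\omega|_{\psi(M)}=0$ and the minimality of $\psi$ in $\mathbb S^{2n-1}$, and to collapse the vector self-shrinker equation to a single scalar ODE for the planar curve $\gamma$. For the Lagrangian assertion I would record the spanning frame $F_s=\gamma'\psi$ and $F_{e_a}=\gamma\,\psi_*(e_a)$, where $\{e_a\}_{a=1}^{n-1}$ is orthonormal on $M$; since $\gamma$ is arclength and $|\psi|=1$, the induced metric is the warped product $ds^2+r^2 g_M$ with $r=|\gamma|$. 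The key algebraic input is $\langle\langle\lambda u,\mu v\rangle\rangle=\lambda\bar\mu\langle\langle u,v\rangle\rangle$ for complex scalars, so $\omega$ scales by $|\lambda|^2$ under multiplication by $\lambda$. Then $\omega(F_{e_a},F_{e_b})=r^2\,\omega(\psi_*(e_a),\psi_*(e_b))=0$ by $\omega|_{\psi(M)}=0$, and $\omega(F_s,F_{e_a})=-\mathrm{Im}\bigl(\gamma'\bar\gamma\,\langle\langle\psi,\psi_*(e_a)\rangle\rangle\bigr)=0$ because $\langle\langle\psi,\psi_*(e_a)\rangle\rangle$ vanishes (its real part is $\langle\psi,\psi_*(e_a)\rangle=0$ and its imaginary part is $-\omega(\psi,\psi_*(e_a))=-\alpha(\psi_*(e_a))=0$). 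Hence $F$ is Lagrangian.

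Next comes the main computation, the mean curvature $H=\sum_{i,j}g^{ij}(\overline{\nabla}_{\partial_i}\partial_j)^{\perp}$. Since $F$ is Lagrangian, $J$ sends the orthonormal tangent frame $T_0=\gamma'\psi$, $T_a=r^{-1}\gamma\,\psi_*(e_a)$ to an orthonormal normal frame $\nu_0=JT_0$, $\nu_a=JT_a$. Writing $\gamma'=e^{i\theta}$ gives $\gamma''=i\theta'\gamma'$, so $\overline{\nabla}_{F_s}F_s=\gamma''\psi=\theta' J(\gamma'\psi)=\theta'\nu_0$ is already normal and contributes $\theta'\nu_0$. For the $M$-block I would fix a point and use an $M$-orthonormal frame that is geodesic there; since $\gamma$ is constant in the $M$-directions, $\psi$ is minimal in $\mathbb S^{2n-1}$, and the unit sphere has vector second fundamental form $-\langle\cdot,\cdot\rangle\psi$, the ambient trace is $\sum_a\overline{\nabla}_{\psi_*(e_a)}\psi_*(e_a)=H^{M,\mathbb C^n}=-(n-1)\psi$. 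Multiplying by the constant scalar $\gamma$ and taking the normal projection with respect to $F$, the $M$-block of $H$ equals $r^{-2}\bigl(-(n-1)\gamma\psi\bigr)^{\perp}$; using the projection of $\gamma\psi$ computed below this is $\tfrac{n-1}{r}\sin(\theta-\phi)\,\nu_0$, so altogether $H=\bigl(\theta'+\tfrac{n-1}{r}\sin(\theta-\phi)\bigr)\nu_0$.

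The remaining ingredient is the normal part of the position vector $X=\gamma\psi$. Since $\langle X,T_a\rangle=r\langle\psi,\psi_*(e_a)\rangle=0$ and $\langle X,\nu_a\rangle$ is a multiple of $\omega(\psi,\psi_*(e_a))=\alpha(\psi_*(e_a))=0$, only the $\nu_0$ component survives; a short computation with $\gamma=re^{i\phi}$, $\gamma'=e^{i\theta}$ gives $X^{\perp}=-\,r\sin(\theta-\phi)\,\nu_0$. Thus both $H$ and $X^{\perp}$ are parallel to $\nu_0$, so the vector equation $H+\tfrac12 X^{\perp}=0$ collapses to the scalar identity $\theta'+\tfrac{n-1}{r}\sin(\theta-\phi)-\tfrac{r}{2}\sin(\theta-\phi)=0$. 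Reading off the purely kinematic relations $r'=\cos(\theta-\phi)$ and $r\phi'=\sin(\theta-\phi)$ from $\gamma'=(r'+ir\phi')e^{i\phi}=e^{i\theta}$, and rewriting $\theta'$ as $\theta'-\phi'$, produces exactly the system (\ref{l201}).

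Finally, the conservation law (\ref{l202}) follows by differentiating $Q=r^{n}e^{-r^2/4}\sin(\theta-\phi)$ and substituting (\ref{l201}): the three contributions $n r'\sin(\theta-\phi)$, $-\tfrac{r^2}{2}r'\sin(\theta-\phi)$, and $r(\theta'-\phi')\cos(\theta-\phi)$ cancel identically, so $Q'\equiv 0$. The range $0<E\le E_{max}$ comes from maximizing $r^{n}e^{-r^2/4}$, whose only critical point is $r=\sqrt{2n}$ with value $(2n/e)^{n/2}$, combined with $|\sin(\theta-\phi)|\le 1$. I expect the mean-curvature step to be the main obstacle: the delicate point is verifying that every normal component transverse to $\nu_0$ cancels, and this is exactly where the minimal Legendrian hypotheses are essential — minimality of $\psi$ kills the spherical-tangential part of $H^{M,\mathbb C^n}$, while the Legendrian condition kills all the $\nu_a$-components of both $H$ and $X^{\perp}$.
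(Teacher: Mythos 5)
The paper itself offers no proof of Lemma \ref{l2}: it is quoted from Anciaux \cite{An}, so there is no in-paper argument to compare yours against. Judged on its own merits, your proof is correct and essentially complete, and it dovetails with the frame computations the paper does carry out in Section \ref{Sec4} (your orthonormal frame $T_0=\gamma'\psi$, $T_a=r^{-1}\gamma\,\psi_*(e_a)$ and its $J$-image is exactly the frame behind (\ref{l32})--(\ref{l33}), and your formula $H=\bigl(\theta'+\tfrac{n-1}{r}\sin(\theta-\phi)\bigr)JT_0$ proves the fact, invoked later in the proof of Theorem \ref{t5}, that $H$ is parallel to $Ju_s$). The individual steps check out: the Lagrangian claim follows as you say from $\langle\langle\psi,\psi_*(e_a)\rangle\rangle=0$ (real part by $|\psi|=1$, imaginary part by the Legendrian condition) together with $\omega|_{\psi(M)}=0$; the normal bundle is then spanned by $JT_0,JT_a$; the $s$-block of the trace is $\gamma''\psi=\theta' JT_0$, already normal; the $M$-block is $r^{-2}\bigl(\gamma H^{M,\mathbb{C}^n}\bigr)^{\perp}$ with $H^{M,\mathbb{C}^n}=H^{M,\mathbb{S}}-(n-1)\psi=-(n-1)\psi$ by minimality and the sphere's second fundamental form; and $X^{\perp}=-r\sin(\theta-\phi)\,JT_0$ since the $JT_a$-components of $X$ vanish by the Legendrian condition. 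Because both $H$ and $X^{\perp}$ are multiples of $JT_0$, the vector equation is \emph{equivalent} to the scalar one, and subtracting the kinematic identity $r\phi'=\sin(\theta-\phi)$ from $\theta'=\bigl(\tfrac{r}{2}-\tfrac{n-1}{r}\bigr)\sin(\theta-\phi)$ yields precisely (\ref{l201}), so both directions of the ``if and only if'' are covered. The differentiation of $r^ne^{-r^2/4}\sin(\theta-\phi)$ and the maximization giving $E_{max}=(2n/e)^{n/2}$ are also correct.

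One small loose end: your argument literally gives $|E|\le E_{max}$, not $0<E\le E_{max}$. The strict positivity in the statement amounts to a normalization --- reversing the orientation of $\gamma$ flips the sign of $\sin(\theta-\phi)$, so one may take $E\ge 0$, and $E=0$ forces $\sin(\theta-\phi)\equiv 0$, i.e.\ $\gamma$ is a straight ray through the origin, a degenerate case excluded from the family. A sentence to this effect would make the range claim airtight, but it does not affect the substance of the proof.
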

\subsection{The unstability for general variations } \label{us42}
Because the complete noncompact Lagrangian examples constructed by
Anciaux in \cite{An} do not have polynomial volume growth, the
$F$-functional is not well-defined and hence we will only discuss
the closed cases. That is, the corresponding curves $\gamma$ are
closed and the immersions $\psi:M\to \mathbb{S}^{2n-1}$ are closed.
\begin{thm} \label{t5}
Fix $n\geq 2.$ Let $\Sigma $ be the image of the immersion $\gamma
(s)\ast \psi (\sigma )$ in Lemma \ref{l2}. If $\Sigma$ is closed,
then $\Sigma $ is $F$-unstable.
\end{thm}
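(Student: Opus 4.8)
The plan is to apply Theorem~\ref{thm3}, which reduces proving $F$-instability to constructing a single smooth normal vector field $V$ on $\Sigma = \gamma \ast \psi$ that satisfies the two orthogonality constraints in (\ref{t301}) while making $\int_\Sigma \langle V, -L^\perp V\rangle\, e^{-|X|^2/4}\,d\mu < 0$. Since $\Sigma$ is Lagrangian in $\mathbb{C}^n$, the most natural source of normal vector fields is the almost complex structure $J$: for any tangent vector field $W$ on $\Sigma$, the field $JW$ is normal. I would therefore look for an instability field of the form $V = J\xi$ for a cleverly chosen tangent field $\xi$. The product structure $\Sigma = I \times M$ (coordinates $(s,\sigma)$) suggests trying $\xi = \partial_s$, the unit tangent to the curve factor, so that $V = J\partial_s$; this is globally well-defined and smooth because $\gamma$ is a closed regular curve.

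First I would compute the two constraint integrals for this $V$. The key geometric input is Lemma~\ref{l2} together with the Legendrian structure recalled just before it: the vectors $y$, $Jz$, $X^M$, $JX^M$ are mutually orthogonal, and along the cone construction $X = \gamma(s)\psi(\sigma)$ one can express $H$, $X^\perp$, and $J\partial_s$ explicitly in terms of $r$, $\phi$, $\theta$ and the ODE system (\ref{l201}). I expect $\langle V, H\rangle = \langle J\partial_s, -\tfrac12 X^\perp\rangle$ to reduce, via $H = -\tfrac12 X^\perp$ and the symplectic identity $\omega(\cdot,\cdot)=\langle J\cdot,\cdot\rangle$, to an expression involving $\sin(\theta-\phi)$ and the conservation law (\ref{l202}). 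If the first constraint $\langle V,H\rangle_e = 0$ or the translation constraint $\int_\Sigma V\, e^{-|X|^2/4} = \vec 0$ fails for the raw choice $V=J\partial_s$, I would correct it by subtracting off its projections onto $H$ and onto $N_{tr}$, exactly as in the decomposition $V = aH + z^\perp + V_0$ used in the proof of Theorem~\ref{thm3}; the corrected field $V_0$ automatically satisfies (\ref{t301}), and subtracting these projections can only decrease the stability quadratic form, so it suffices to show the uncorrected $\langle V,-L^\perp V\rangle_e$ is negative or that the negative contribution survives correction.

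Next I would evaluate $\langle V, -L^\perp V\rangle_e$ using formula (\ref{p30}), namely $\int_\Sigma \big(|\nabla^\perp V|^2 - |\langle A, V\rangle|^2 - \tfrac12 |V|^2\big)\, e^{-|X|^2/4}\,d\mu$. The point of choosing $V$ adapted to the curve direction is that, for Lagrangian self-shrinkers, the second fundamental form interacts with $J$ in a controlled way (the cubic form $\langle A_{ij}, JW\rangle$ is fully symmetric), so the curvature term $|\langle A, V\rangle|^2$ should be large relative to $|\nabla^\perp V|^2$. I anticipate the integrand collapses, after using (\ref{l201}) and (\ref{l202}), to a sign-definite quantity along the curve, forcing negativity. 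The main obstacle will be this curvature computation: one must pin down $A$ for the cone-type immersion $\gamma \ast \psi$, separating the contributions from the curve $\gamma$ and from the minimal Legendrian $\psi$, and then show the resulting one-dimensional integral in $s$ is negative using only the closed-curve ODE and the conservation law, without explicit knowledge of $\gamma$. Handling the projection corrections so that they do not cancel the gained negativity — i.e. verifying $V_0 \neq 0$ and that its quadratic form stays negative — is the delicate endgame, and I would control it by a dimension or integration-by-parts argument showing the $H$- and $N_{tr}$-components of $J\partial_s$ cannot absorb the full negative term.
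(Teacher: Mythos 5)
Your proposal has a fatal flaw at its very first step: the field you choose, $V=J\partial_s=Ju_s$, is \emph{pointwise parallel to the mean curvature vector} $H$. For Anciaux's examples the Lagrangian angle depends only on $s$, so $H=\beta'(s)\,Ju_s$ (this is exactly the fact the paper quotes from Anciaux, p.~40, ``$H$ is parallel to $Ju_s$''). Consequently $\langle V,H\rangle$ is single-signed along $\Sigma$, so the first constraint in (\ref{t301}) fails as badly as possible; worse, in the round-circle case $E=E_{max}$ (where $r\equiv\sqrt{2n}$ and $\theta-\phi\equiv\pi/2$, a case included among the closed examples) one has $Ju_s=2H/\sqrt{2n}$ exactly, so after subtracting the projections onto $H$ and $N_{tr}$ the corrected field $V_0$ is identically zero and your construction produces nothing. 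Your fallback argument is also wrong in sign: since $L^{\perp}H=H$ and $L^{\perp}y^{\perp}=\tfrac12 y^{\perp}$ (Proposition \ref{p4}), the decomposition $V=aH+z^{\perp}+V_0$ gives
\begin{equation}
\langle V,-L^{\perp}V\rangle_e=-a^2|H|_e^2-\tfrac12|z^{\perp}|_e^2+\langle V_0,-L^{\perp}V_0\rangle_e,
\end{equation}
so subtracting the projections \emph{increases} the quadratic form rather than decreasing it. Negativity of the raw $\langle V,-L^{\perp}V\rangle_e$ therefore does not suffice; indeed $Q(H)=-|H|_e^2<0$ yet $H$ is precisely the direction excluded by the constraint. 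This is not a repairable bookkeeping issue: directions of the form $f(s)Ju_s$ are deformations of the curve factor alone, and they are exactly where the constraint projections live, so the instability cannot be extracted this way (at least not for all the closed examples).

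The paper's proof goes in the orthogonal direction, literally: it takes $V=J(\gamma w)$ with $w$ tangent to the Legendrian factor $\psi(M)$. Such $V$ is pointwise orthogonal to $Ju_s$, hence to $H$, so the first constraint holds automatically; the second constraint $\int_{\Sigma}Ve^{-|X|^2/4}d\mu=\vec 0$ follows from the discrete rotational symmetry of the closed curve $\gamma$ ($\int_{\gamma}\gamma e^{-r^2/4}r^{n-1}ds=0$). The real work, which your proposal has no substitute for, is then (a) Lemma \ref{l3}, computing $\langle V,-L^{\perp}V\rangle_e$ in separated form, (b) the integral identity (\ref{p501}) for the curve, and (c) Lemma \ref{l5}: an averaging/trace argument over the restrictions $E_{\beta}^{\top}$ of the standard basis vectors of $\mathbb{C}^n$, which produces a nonzero $w_0$ with $\int_M\bigl(|\nabla^M w_0|^2-|\langle A^{M,\mathbb S},Jw_0\rangle|^2\bigr)d\mu\le\int_M|w_0|^2d\mu$, yielding negativity for $n\ge3$; the case $n=2$ is handled separately using that minimal Legendrian curves in $\mathbb S^3$ are great circles, with $w$ the parallel tangent field. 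You would need to redirect your construction to these $M$-tangent directions and supply an analogue of Lemma \ref{l5} for the argument to close.
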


To prove the result, we first set up the notations and derive a few
Lemmas. For a fixed point $p\in \Sigma=\gamma \ast \psi(I\times M)$,
it can be represented by $\gamma(s_0) q$ for some $s_0\in I$ and $q
\in \psi(M)$. Choose a local normal coordinate system
$x^1,...,x^{n-1}$ at $q$. Denote $u_s=\frac{\partial X}{\partial
s}=\gamma'X^M$, $e_i=\frac{\partial X^M}{\partial x^i}$, and
$u_i=\frac{\partial X}{\partial x^i}=\gamma e_i$ for $i=1,...,n-1$,
where $X^M$ is the position vector of $\psi(M)$ and $X=\gamma X^M$.
The matrix $(g_{\alpha\beta})$ of the induced metric of $\Sigma$
with respect to the basis $u_1,...,u_{n-1},u_s$ is
\begin{align}
g_{ss}=1, \quad g_{js}=g_{sj}=0, \quad g_{jk}=r^2h_{jk},\quad
\mbox{and}\quad h_{jk}(q)=\delta_{jk} \label{l32}
\end{align}
for $j,k=1,...,n-1$. The Levi-Civita connections on $\Sigma$ and
$\psi(M)$ are denoted by $\nabla$ and $\nabla^M$, respectively.
 Define
\begin{align}
N_0=\{V|V=J(\gamma w), ~w\in \Gamma (T\psi(M))\}. \notag
\end{align}
For $V\in N_0$, the operator $\langle V,-L^{\perp}V\rangle_e$ can be
simplified as below.
\begin{lem}\label{l3}
Assume that $\Sigma$ is a closed Lagrangian self-shrinker as in
Lemma~\ref{l2} and $V\in N_0$ is represented by $J(\gamma w)$. The
second fundamental forms of $\Sigma $\ in $\mathbb C^{n}$ and
$\psi(M)$ in $\mathbb S^{2n-1}$ are denoted by $A^{\Sigma}$ and
$A^{M,\mathbb{S}}$, respectively. Then we have
\begin{align}
(i) ~~~ &|\langle A^{\Sigma},V\rangle|^{2}=|\langle A^{M,\mathbb S},Jw\rangle|^{2}
+2\sin^{2}(\theta -\phi )|w|^{2},\label{l301} \\
(ii) ~~ &|\nabla ^{\perp }V|^{2}=|\nabla ^{M}w|^{2}+2\cos ^{2}(\theta
-\phi )|w|^{2},\label{l302} \\
(iii) ~&\langle V,-L^{\perp}V\rangle_e
=-\int_{\gamma}\left(\frac{1}{2}r^{2}-2+4\sin ^{2}
 (\theta -\phi)\right)e^{\frac{-r^2}{4}}r^{n-1}ds\int_M|w|^{2}d\mu_M \notag \\
&~~~~~~~~~~~~~~~+\int_{\gamma}e^{\frac{-r^2}{4}}r^{n-1}ds
\int_M\left(|\nabla ^{M}w|^{2}-|\langle A^{M,\mathbb S},Jw\rangle|^{2}\right)d\mu_M.\label{l303}
 \end{align}
\end{lem}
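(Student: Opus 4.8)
The plan is to establish the two pointwise identities (i) and (ii) at an arbitrary point $p=\gamma(s_0)q$ of $\Sigma$, and then deduce (iii) by feeding them into the weighted Rayleigh quotient and separating variables. Throughout I would exploit two structural facts. First, because $\gamma\ast\psi$ is Lagrangian, $N\Sigma=J(T\Sigma)$, so a normal frame is $Ju_s,Ju_1,\dots,Ju_{n-1}$ and every $V\in N_0$, written $V=J(\gamma w)=\sum_l w^lJu_l$, is automatically normal. Second, the Legendrian hypothesis forces the orthogonal splitting $\mathbb{C}^n=T\psi(M)\oplus JT\psi(M)\oplus\mathbb{R}X^M\oplus\mathbb{R}JX^M$; in particular $\langle A^{M,\mathbb{S}}_{jk},JX^M\rangle=0$ (a consequence of $\omega(e_j,e_k)=0$), so $A^{M,\mathbb{S}}_{jk}$ lies in $JT\psi(M)$. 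I would work in the orthonormal frame $\hat u_s=u_s$, $\hat u_i=u_i/r$, using the normal coordinates at $q$ so that $\nabla^M_{e_j}e_k(q)=0$ and $\langle e_j,e_k\rangle=\delta_{jk}$.

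First I would compute the three blocks of $A^\Sigma$. Differentiating $X=\gamma X^M$ and using $\gamma'=e^{i\theta}$, $\gamma=re^{i\phi}$, and the key ratio $\gamma'/\gamma=\frac1r e^{i(\theta-\phi)}$, one finds $A^\Sigma_{ss}=\theta'Ju_s$, $A^\Sigma_{sj}=\frac{\sin(\theta-\phi)}{r}Ju_j$, and $A^\Sigma_{jk}=\gamma A^{M,\mathbb{S}}_{jk}+\delta_{jk}\,r\sin(\theta-\phi)Ju_s$. The decisive step is the tangent/normal bookkeeping: multiplication by $\gamma'/\gamma$ rotates a tangent vector $u_j$ into a $\cos(\theta-\phi)$-multiple of $u_j$ (tangent) plus a $\sin(\theta-\phi)$-multiple of $Ju_j$ (normal), while the Gauss relation for $\psi(M)\subset\mathbb{S}^{2n-1}\subset\mathbb{C}^n$ supplies the $-\delta_{jk}X^M$ sphere term, whose normal projection is controlled by $X^\perp=-r\sin(\theta-\phi)Ju_s$ (the self-shrinker identity $H=-X^\perp/2$). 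Pairing with $V$ and using that $J$ is an isometry with $\langle Ju_l,Ju_m\rangle=g_{lm}$, the $ss$-pairing vanishes, the $sj$-pairings square and sum to $2\sin^2(\theta-\phi)|w|^2$, and the $jk$-pairings reproduce $|\langle A^{M,\mathbb{S}},Jw\rangle|^2$, which is (i).

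For (ii) I would differentiate $V=\sum_l w^l(\sigma)J(\gamma e_l)$ in the $s$- and $x^j$-directions. The $s$-derivative factors through $\gamma'/\gamma$ again, contributing normal part $\frac{\cos(\theta-\phi)}{r}V$ and hence $\cos^2(\theta-\phi)|w|^2$. In the $x^j$-derivative the crucial simplification is that the intrinsic term $\sum_l w^lJ(\gamma A^{M,\mathbb{S}}_{jl})$ is purely tangent to $\Sigma$, since $J$ carries $JT\psi(M)$ back into $T\psi(M)\subset T\Sigma$; it therefore drops out of the normal projection. What survives is $|\nabla^M w|^2$ from the derivatives of the coefficients $w^l$ and a further $\cos^2(\theta-\phi)|w|^2$ from the normal part $r\cos(\theta-\phi)Ju_s$ of $JX$. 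Summing over the frame yields (ii). Since $p$ is arbitrary, (i) and (ii) hold on all of $\Sigma$.

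Finally, as $\Sigma$ is closed I would invoke the divergence-theorem identity of Proposition~\ref{p4} (equation~(\ref{p30})) with $V_1=V_2=V$, giving $\langle V,-L^\perp V\rangle_e=\int_\Sigma(|\nabla^\perp V|^2-|\langle A^\Sigma,V\rangle|^2-\tfrac12|V|^2)e^{-|X|^2/4}d\mu$. Substituting (i), (ii) and $|V|^2=r^2|w|^2$, the coefficient of $|w|^2$ becomes $2\cos^2(\theta-\phi)-2\sin^2(\theta-\phi)-\tfrac12 r^2=-(\tfrac12 r^2-2+4\sin^2(\theta-\phi))$ after $\cos^2=1-\sin^2$. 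Because $|X|^2=r^2$ and the metric is the warped product with $d\mu=r^{n-1}ds\,d\mu_M$, the factor $e^{-r^2/4}r^{n-1}$ depends only on $s$ while $|w|^2$, $|\nabla^M w|^2$ and $|\langle A^{M,\mathbb{S}},Jw\rangle|^2$ depend only on $\sigma$; Fubini then splits the integral into the product form (iii). I expect the main obstacle to be the first step, namely the careful separation of tangent and normal components of $A^\Sigma$, where one must correctly combine the Gauss relation of the tower $\psi(M)\subset\mathbb{S}^{2n-1}\subset\mathbb{C}^n$ with the way complex multiplication by $\gamma$ and $\gamma'$ interacts with $J$ and with the Legendrian splitting.
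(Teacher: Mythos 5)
Your proposal is correct and takes essentially the same route as the paper's proof: pointwise computation of the blocks of $A^{\Sigma}$ and $\nabla^{\perp}V$ at $p$ in the frame $u_1,\dots,u_{n-1},u_s$, using the Legendrian orthogonality of $w$, $Jz$, $X^M$, $JX^M$ and the identities $\gamma\overline{\gamma}=r^2$, $\gamma'\overline{\gamma}=re^{i(\theta-\phi)}$, followed by substitution into the divergence-theorem identity (\ref{r2}) and the splitting $e^{-|X|^2/4}d\mu_{\Sigma}=e^{-r^2/4}r^{n-1}ds\,d\mu_M$. The only cosmetic difference is that you first compute the vector-valued blocks (e.g.\ $A^{\Sigma}_{jk}=\gamma A^{M,\mathbb S}_{jk}+\delta_{jk}r\sin(\theta-\phi)Ju_s$) and then project, whereas the paper evaluates the scalar pairings $\langle A^{\Sigma}_{\alpha\beta},V\rangle$ and $\langle\nabla^{\perp}_{u_\alpha}V,Ju_\beta\rangle$ directly via the Hermitian form.
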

\begin{proof}
(i) For $V\in N_0$, it can be represented by $J(\gamma w)$ for some
vector field $w\in \Gamma(T\psi(M))$. Using
$\gamma\overline{\gamma}=r^2$ and
$\gamma'\overline{\gamma}=re^{i(\theta -\phi)}$, we conclude that
\begin{align}
&\langle A^{\Sigma}_{kl},V\rangle
=\mbox{Re}\langle\langle\gamma\frac{\partial^2X^M}{\partial x^k\partial x^l},J(\gamma w)\rangle\rangle
=r^{2}\mbox{Re}\langle\langle A_{kl}^{M},Jw\rangle\rangle
=r^{2}\langle A_{kl}^{M,S},Jw\rangle, \notag \\
&\langle A^{\Sigma}_{ks},V\rangle
=\mbox{Re}\langle\langle \gamma'\frac{\partial X^M}{\partial x^k}, J(\gamma w)\rangle\rangle
=r\sin(\theta -\phi )\langle e_k,w\rangle, \label{l33}\\
&\langle A^{\Sigma}_{ss},V\rangle
=\mbox{Re}\langle\langle \gamma''X^M,J(\gamma w)\rangle\rangle
=\mbox{Re}(\gamma''\overline{\gamma}\langle\langle X^M, Jw\rangle\rangle)
=0 \notag
\end{align}
for $k,l=1,..,n-1$. Here the second equalities of the second and
third equations of (\ref{l33}) are followed by the fact that $e_k$,
$Jw$, $X^M$, and $JX^M$ are mutually orthogonal. Combining
(\ref{l32}) and (\ref{l33}), it gives
\begin{align}
|\langle A^{\Sigma},V\rangle|^{2}
=&\overset{n-1}{\underset{k,l=1}{\sum}}\langle A^{\Sigma}_{kl},V\rangle^{2}\frac{1}{r^{4}}
+2\underset{k=1}{\overset{n-1}{\sum}}\langle A^{\Sigma}_{ks},V\rangle^{2}\frac{1}{r^{2}}+
\langle A^{\Sigma}_{ss},V\rangle^{2}
\notag \\
=&|\langle A^{M,\mathbb S},Jw\rangle|^{2}+2\sin ^{2}(\theta -\phi )|w|^{2} \ \ \ \ \ \mbox{at $p$}. \notag
\end{align}
(ii) Since $\Sigma$ is a Lagrangian,
$\{Ju_\alpha\}_{\alpha=1,...,n-1,s}$ is an orthogonal basis at $p$
for the normal bundle. We will calculate the normal projection of
$(\nabla^{\perp} _{u_\alpha }J(\gamma w))_{\alpha=1,...,n-1,s}$ on
$Ju_{j}$ and $Ju_{s}.$ Using the property that $w$, $Je_k$, $X^M$,
and $JX^M$ are mutually orthogonal, $\gamma\overline{\gamma}=r^2$
and $\gamma'\overline{\gamma}=re^{i(\theta -\phi)}$, we conclude
that
\begin{align} \label{l34}
\hspace{-0.35cm}\left.\begin{array}{ll}
&\langle \nabla^{\perp} _{u_k}J(\gamma w),Ju_{j}\rangle=
\mbox{Re}\langle\langle i\gamma\frac{\partial}{\partial x^k} w,i\gamma e_j\rangle\rangle
=r^{2}\langle \nabla _{e_k}^{M}w,e_{j}\rangle \\[.1cm]
&\langle \nabla^{\perp} _{u_k}J(\gamma w),Ju_s\rangle
=-\mbox{Re}\langle\langle i\gamma w,\frac{\partial}{\partial x^k}i\gamma'X^M\rangle\rangle
=-r\cos(\theta -\phi)\langle w,e_k\rangle \\[.1cm]
&\langle\nabla^{\perp}_{u_s}J(\gamma w),Ju_{j}\rangle
=\mbox{Re}\langle\langle i\gamma'w,i\gamma e_{j}\rangle\rangle
=r\cos (\theta -\phi)\langle w,e_j\rangle \\[.1cm]
&\langle \nabla^{\perp}_{u_s}J(\gamma w),Ju_s\rangle
=\mbox{Re}\langle\langle i\gamma' w,i\gamma'X^M\rangle\rangle=0. \\
\end{array}\right.
\end{align}
From (\ref{l34}), it follows that
\begin{align}
&|\nabla ^{\perp }V|^{2}
=\langle\nabla^{\perp} _{u_{\alpha}}J(\gamma w),\nabla^{\perp} _{u_{\beta}}J(\gamma w)\rangle
g^{\alpha \beta } \notag \\
=&\overset{n-1}{\underset{k=1}{\sum}}\langle\nabla^{\perp}_{u_k}J(\gamma w),\nabla^{\perp}_{u_k}J(\gamma
w)\rangle\frac{1}{r^{2}}+\langle\nabla _{u_s}J(\gamma w),\nabla _{u_s}J(\gamma w)\rangle \notag \\
=&\left(\overset{n-1}{\underset{j,k=1}{\sum}}\langle\nabla^{\perp}
_{u_k}J(\gamma w),\frac{Ju_{j}}{r}\rangle^2
+\overset{n-1}{\underset{k=1}{\sum}}\langle\nabla^{\perp}_{u_k}J(\gamma
w),Ju_s\rangle^2\right)\frac{1}{r^{2}}
+\overset{n-1}{\underset{j=1}{\sum}}\langle\nabla^{\perp}_{u_s}J(\gamma w),\frac{Ju_j}{r}\rangle^2 \notag \\
=&\overset{n-1}{\underset{j,k=1}{\sum}}\langle\nabla_{e_k}^M w,e_j\rangle^2
+\overset{n-1}{\underset{j=1}{\sum}}2\cos^2(\theta-\phi)\langle w,e_j\rangle^2 \notag \\
=&|\nabla ^{M}w|^{2}+2\cos ^{2}(\theta -\phi )|w|^{2}. \notag
\end{align}
(iii) Plugging (\ref{l301}) and (\ref{l302}) into (\ref{r2}), and
using
$e^\frac{-|X|^2}{4}d\mu_{\Sigma}=e^{-\frac{r^2}{4}}r^{n-1}dsd\mu_M$,
we get
\begin{align}
&\langle V, -L^{\perp}V\rangle_e \notag \\
=&\int_{\Sigma}\left(|\nabla^{\perp}V|^2-|\langle A^{\Sigma}, V\rangle|^2-\frac{1}{2}|V|^2\right)
e^{-\frac{|X|^2}{4}}d\mu_{\Sigma} \notag \\
=&\int_{\gamma}\int_M\Big(|\nabla ^{M}w|^{2}+2\cos ^{2}(\theta
-\phi )|w|^{2}-\left(|\langle A^{M,\mathbb S},Jw\rangle|^{2}
+2\sin^{2}(\theta -\phi )|w|^{2}\right) \notag \\
&-\frac{1}{2}r^2|w|^2\Big) e^{-\frac{r^2}{4}}r^{n-1}d\mu_mds \notag \\
=&-\int_{\gamma}\left(\frac{1}{2}r^{2}-2+4\sin ^{2}
 (\theta -\phi)\right)e^{\frac{-r^2}{4}}r^{n-1}ds\int_M|w|^{2}d\mu_M \notag \\
&+\int_{\gamma}e^{\frac{-r^2}{4}}r^{n-1}ds
\int_M\left(|\nabla ^{M}w|^{2}-|\langle A^{M,\mathbb S},Jw\rangle|^{2}\right)d\mu_M.\notag
\end{align}
Thus (iii) is proved.
\end{proof}
To further simplify $\langle V,-L^{\perp}V\rangle_e$, we now derive
some integral properties of the curve $\gamma$.
\begin{lem}
Let $\gamma:I\to\mathbb C^*$ be a closed smooth regular curve
parameterized by the arclength $s$ satisfying (\ref{l201}). That is,
$\gamma*\psi$ in Lemma \ref{l2} define a closed self-shrinker. Then
one has
\begin{align}
\int_{\gamma}(\frac{1}{2}r^2-n)r^{n-1}e^{-\frac{r^2}{4}}ds=0 \label{p501}
\end{align}
and
\begin{align}
\int_{\gamma}(\frac{1}{2r^2}-\frac{n}{r^4})r^{n-1}e^{-\frac{r^2}{4}}ds
=-\int_{\gamma}\frac{4\cos^2(\theta-\phi)}{r^4}r^{n-1}e^{-\frac{r^2}{4}}ds. \label{p502}
\end{align}
\end{lem}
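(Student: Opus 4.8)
The plan is to recognize both integrands as total derivatives along the closed curve $\gamma$, so that each integral vanishes for the trivial reason that $\gamma$ is periodic. Writing $\beta=\theta-\phi$ for brevity, the system (\ref{l201}) reads $r'=\cos\beta$ and $\beta'=(\frac{r}{2}-\frac{n}{r})\sin\beta$. The device I would use is the one-parameter family
\begin{align}
g_k(s)=r^k e^{-\frac{r^2}{4}}\cos\beta, \notag
\end{align}
where $k$ is a real parameter to be chosen at the end; each $g_k$ is smooth and well defined since $r>0$ along $\gamma\subset\mathbb C^*$.

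First I would differentiate $g_k$ by the product rule and substitute the two equations of (\ref{l201}). The two terms coming from $r^k$ and $e^{-r^2/4}$ combine (after using $r'=\cos\beta$) into $r^{k-1}e^{-r^2/4}\cos^2\beta\,(k-\frac{r^2}{2})$, while the term from $\frac{d}{ds}\cos\beta=-\sin\beta\,\beta'$ becomes $-r^{k-1}e^{-r^2/4}\sin^2\beta\,(\frac{r^2}{2}-n)$. Writing $k-\frac{r^2}{2}=-(\frac{r^2}{2}-n)-(n-k)$ and collecting the coefficient of $(\frac{r^2}{2}-n)$ via $\cos^2\beta+\sin^2\beta=1$ then yields the single clean formula
\begin{align}
g_k'(s)=-\,r^{k-1}e^{-\frac{r^2}{4}}\left[\Bigl(\frac{r^2}{2}-n\Bigr)+(n-k)\cos^2\beta\right]. \notag
\end{align}
This identity is the crux of the argument; everything afterward is just a choice of $k$.

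Since $\gamma$ is a closed curve, $g_k$ is periodic in the arclength $s$, so $\int_\gamma g_k'\,ds=0$ for every $k$, and no boundary terms survive. Taking $k=n$ annihilates the $\cos^2\beta$ term and leaves $\int_\gamma(\frac{r^2}{2}-n)r^{n-1}e^{-r^2/4}\,ds=0$, which is exactly (\ref{p501}). Taking $k=n-4$ gives $n-k=4$, and the vanishing of $\int_\gamma g_{n-4}'\,ds$ reads $\int_\gamma[(\frac{r^2}{2}-n)+4\cos^2\beta]\,r^{n-5}e^{-r^2/4}\,ds=0$; rewriting $r^{n-5}=r^{n-1}/r^4$ and $r^{n-5}(\frac{r^2}{2}-n)=r^{n-1}(\frac{1}{2r^2}-\frac{n}{r^4})$ turns this into precisely (\ref{p502}).

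The hard part is not the computation, which is routine once the family $g_k$ is written down, but spotting the right antiderivative: the prefactor must be $\cos\beta$ rather than $\sin\beta$ (the choice $\sin\beta$ with $k=n$ merely reproduces the conservation law (\ref{l202}), since that $g$ is constant), and the free power $k$ must be tuned so that the coefficient $(n-k)$ of the $\cos^2\beta$ term matches each target identity ($n-k=0$ for (\ref{p501}), $n-k=4$ for (\ref{p502})). Once these two observations are in place, closedness of $\gamma$ does all the remaining work, so I do not anticipate any genuine analytic obstacle.
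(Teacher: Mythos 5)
Your proof is correct, and it takes a genuinely different route from the paper's. The paper obtains (\ref{p501}) by specializing an identity it has already proven for general self-shrinkers: equation (\ref{p201}) of Proposition \ref{p2}, $\int_{\Sigma}(|X|^2-2n)e^{-|X|^2/4}d\mu=0$, is evaluated on the product $\Sigma=\gamma\ast\psi(I\times M)$, where the measure factors as $e^{-r^2/4}r^{n-1}\,ds\,d\mu_M$ and the nonvanishing of $\int_M d\mu_M$ lets one strip off the $M$-factor. For (\ref{p502}) the paper applies the weighted divergence theorem on $\Sigma$ to the function $1/|X|^2$, i.e.\ computes $\mathcal{L}(1/|X|^2)$ via (\ref{p102}) and $\nabla|X|^2=2X^{\top}$, uses $|X^{\top}|=r\cos(\theta-\phi)$, and factors again. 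You instead work entirely at the level of the ODE system (\ref{l201}) on the curve: your family $g_k=r^k e^{-r^2/4}\cos\beta$ has derivative
\begin{align}
g_k'(s)=-\,r^{k-1}e^{-\frac{r^2}{4}}\Bigl[\bigl(\tfrac{r^2}{2}-n\bigr)+(n-k)\cos^2\beta\Bigr], \notag
\end{align}
which I have checked, and periodicity of $g_k$ on the closed curve gives both identities at $k=n$ and $k=n-4$ (the algebra converting the $k=n-4$ case into the stated form of (\ref{p502}) is also correct). The two arguments are cousins --- both integrate an exact derivative over a closed object --- but yours lives on $\gamma$ while the paper's lives on $\Sigma$. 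What your version buys: it is self-contained and elementary, needs no submanifold-level Stokes machinery, no polynomial volume growth discussion, and no appeal to $\int_M d\mu_M\neq 0$; moreover the single parameter $k$ unifies (\ref{p501}), (\ref{p502}), and (as you note via the $\sin\beta$ companion) the conservation law (\ref{l202}). What the paper's version buys: it exhibits both identities as instances of general self-shrinker identities (Propositions \ref{p1} and \ref{p2}), which clarifies where they come from geometrically and why analogous identities hold for any function of $|X|^2$, not just $|X|^2$ and $1/|X|^2$.
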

\begin{rem}
The equality (\ref{p501}) is used to simplify (\ref{l303}) while the
equality (\ref{p502}) is used to simplify (\ref{l403}) for
Lagrangian variation.
\end{rem}
\begin{proof}
Equation (\ref{p501}) follows from the simplification of equation
(\ref{p201}) and $\int_Md\mu_M\neq0$. Indeed, equation (\ref{p201})
becomes
\begin{align}
0=\int_{\gamma}\int_{M}(r^2-2n)e^{-\frac{r^2}{4}}r^{n-1}d\mu_Mds
=\int_{\gamma}(r^2-2n)e^{-\frac{r^2}{4}}r^{n-1}ds\int_{M}d\mu_M. \notag
\end{align}
Recall that the linear operator $\mathcal{L}f=\Delta
f-\frac{1}{2}\langle X, \nabla f\rangle
=e^{\frac{|X|^2}{4}}div(e^{-\frac{|X|^2}{4}}\nabla f)$ in
Proposition \ref{p1}. It gives
\begin{align}
\int_{\Sigma}\mathcal{L}(\frac{1}{|X|^2})e^{-\frac{|X|^2}{4}}d\mu_{\Sigma}
=\int_{\Sigma}div(e^{-\frac{|X|^2}{4}}\nabla\frac{1}{|X|^2})d\mu_{\Sigma}
=0 \label{p51}
\end{align}
since $\partial\Sigma=\emptyset$. On the other hand, using equation
(\ref{p102}) and $\nabla|X|^2=2X^{\top}$ gives
\begin{align}
\mathcal{L}(\frac{1}{|X|^2})=\frac{-\mathcal{L}|X|^2}{|X|^4}+\frac{2|\nabla|X|^2|^2}{|X|^6}
=\frac{-2n+|X|^2}{|X|^4}+\frac{8|X^{\top}|^2}{|X|^6}. \label{p52}
\end{align}
Combining (\ref{p51}), (\ref{p52}), and using
$|X^{\top}|=\mbox{Re}\left(re^{i(\phi-\theta)}\right)=r\cos(\theta-\phi)$,
one has
\begin{align}
0=&\int_{\gamma}\int_{M}(\frac{-2n+r^2}{r^4}+\frac{8r^2\cos^2(\theta-\phi)}{r^6})
e^{-\frac{r^2}{4}}r^{n-1}d\mu_Mds \notag \\
=&\int_{\gamma}(\frac{-2n+r^2}{r^4}+\frac{8r^2\cos^2(\theta-\phi)}{r^6})
e^{-\frac{r^2}{4}}r^{n-1}ds\int_{M}d\mu_M. \notag
\end{align}
Then it get the equation (\ref{p502}) immediately since
$\int_Md\mu_M\neq0$.
\end{proof}
Next, we want to find a vector field $w_0$ in $\Gamma(T\psi(M))$
with nice special properties that will be needed in proving Theorem
\ref{t5} and Theorem \ref{thm6}.
\begin{lem}\label{l5}
Let $\psi :M^{n-1}\rightarrow \mathbb S^{2n-1}\subset\mathbb{C}^n$
be a minimal Legendrian immersion. Then there exists a nonzero
vector field $w_0$ in $\Gamma(T\psi(M))$ satisfying
\begin{align}
\frac{\int_M|\nabla^{M}w_0|^{2}-|\langle A^{M,\mathbb S},Jw_0\rangle|^{2}d\mu }
{\int_M|w_0|^{2}d\mu
}\leq 1 \quad\mbox{and} \quad \langle\nabla^M_x w_0,y\rangle=\langle\nabla^M_yw_0,x\rangle
\label{l501}
\end{align}
for any $x,y\in T\psi(M)$.
\end{lem}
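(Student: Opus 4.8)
The goal is to produce a nonzero tangent vector field $w_0$ on $\psi(M)$ that is simultaneously a \emph{symmetric} field (in the sense that $\langle\nabla^M_x w_0,y\rangle$ is symmetric in $x,y$) and an energy-minimizer for the Rayleigh-type quotient appearing in (\ref{l501}). My strategy is to look for $w_0$ arising from an eigenfunction of the ambient geometry: since $\psi$ is a minimal Legendrian immersion, the natural candidate for a symmetric gradient-type field is $w_0=\nabla^M f$ for a suitable function $f$ on $M$. The symmetry condition $\langle\nabla^M_x\nabla^M f,y\rangle=\langle\nabla^M_y\nabla^M f,x\rangle$ is then automatic, because the Hessian of a function is always symmetric. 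So the second requirement in (\ref{l501}) is handled for free by taking $w_0$ to be a gradient field, and the real content is making the quotient $\le 1$.

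\textbf{Reducing the quotient.}
With $w_0=\nabla^M f$, I would use the minimality of $\psi$ together with the Legendrian condition to rewrite $|\langle A^{M,\mathbb S},Jw_0\rangle|^2$ and the term $|\nabla^M w_0|^2$ in terms of intrinsic quantities on $M$ and the first eigendata of $f$. The key structural fact I expect to exploit is that for a minimal Legendrian submanifold of $\mathbb{S}^{2n-1}$ the second fundamental form $A^{M,\mathbb S}$ is totally symmetric (the cubic form $\langle A^{M,\mathbb S}(x,y),Jz\rangle$ is symmetric in all three slots), which is exactly the integrability/Legendrian identity. This lets me integrate by parts and convert $\int_M|\langle A^{M,\mathbb S},Jw_0\rangle|^2$ and $\int_M|\nabla^M w_0|^2$ into expressions involving $\int_M f\,\Delta^M f$ and curvature terms of $M$. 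Using the Gauss equation for a minimal Legendrian $M$ in $\mathbb S^{2n-1}$ to control the Ricci curvature, the quotient should reduce to a comparison with the first nonzero eigenvalue $\lambda_1$ of the Laplacian on $M$, and I would pick $f$ to be a first eigenfunction so that $\Delta^M f=-\lambda_1 f$.

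\textbf{The existence/eigenvalue step.}
The field $w_0=\nabla^M f$ is nonzero precisely because $f$ is a nonconstant eigenfunction, which exists on any closed $M$. To close the estimate I need the resulting quotient to be bounded by $1$; this is where I expect the real work to lie. The plan is to feed in a good test function or to appeal to a sharp eigenvalue bound for minimal Legendrian submanifolds of the sphere. A clean route is to use the coordinate functions: the restrictions to $\psi(M)$ of the ambient linear coordinates (equivalently the components of the position vector $X^M$ or of $JX^M$) are natural eigenfunction candidates whose gradients are tangent fields, and for a minimal submanifold of $\mathbb S^{2n-1}$ these coordinate restrictions satisfy a Laplace eigenvalue equation $\Delta^M X^M=-(n-1)X^M$. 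Plugging such a field in, the Legendrian orthogonality relations (that $X^M$, $JX^M$, $y$, $Jz$ are mutually orthogonal, already recorded in the excerpt) force the cross terms to behave exactly so that the numerator is controlled by the denominator.

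\textbf{The main obstacle.}
The hard part will be the sharp comparison in (\ref{l501}): getting the ratio genuinely $\le 1$ rather than merely bounded. The symmetry condition is cheap (gradient fields), but the eigenvalue estimate must be tight, and it depends on correctly matching the first fundamental-form/Ricci contribution of $M$ against the term $|\langle A^{M,\mathbb S},Jw_0\rangle|^2$. I anticipate needing the precise minimal-Legendrian identities (total symmetry of the cubic form and the value of $|A^{M,\mathbb S}|^2$ via Gauss) to make the two sides cancel down to a clean inequality, and I would verify the construction of $w_0$ is nonzero by choosing $f$ nonconstant at the end. If the coordinate-function approach does not yield the sharp constant directly, the fallback is a min-max / variational argument: the quotient in (\ref{l501}) is the Rayleigh quotient of the Jacobi-type operator $-\Delta^M-|\langle A^{M,\mathbb S},J\cdot\rangle|^2$ restricted to symmetric fields, and exhibiting one symmetric test field on which it is $\le 1$ suffices, which is weaker and hence easier than identifying the true infimum.
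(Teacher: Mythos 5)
Your handling of the symmetry condition is correct and in fact coincides with what the paper does: the paper's test fields $E_\beta^{\top}$ (tangential projections of the constant ambient basis vectors) are precisely the gradients $\nabla^M\langle E_\beta,X^M\rangle$ of the coordinate functions, so your ``gradient fields are automatically symmetric'' observation and the paper's verification via $\langle\nabla^M_{\partial_k}E_\beta^{\top},\partial_j\rangle=\langle E_\beta,A^M_{jk}\rangle$ are the same fact. The genuine gap is in the quantitative half. Your claim that for a single coordinate field the Legendrian orthogonality relations ``force the cross terms to behave exactly so that the numerator is controlled by the denominator'' is false in general: computing as in (\ref{l57})--(\ref{l58}), the numerator for $w_0=E_\beta^{\top}$ equals $\int_M\bigl(|\langle E_\beta,A^{M,\mathbb S}\rangle|^2-|\langle JE_\beta,A^{M,\mathbb S}\rangle|^2\bigr)d\mu+(n-1)\int_M\langle E_\beta,X^M\rangle^2d\mu$, and the difference of the two second-fundamental-form terms has no sign for an individual $\beta$. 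The decisive idea in the paper, absent from your proposal, is to sum over the whole $J$-invariant basis $E_1,\dots,E_{2n}$: since $E_{\alpha+n}=JE_\alpha$, those indefinite terms cancel in pairs, yielding $\sum_\beta f(E_\beta^{\top})=(n-1)\int_Md\mu=\sum_\beta\int_M|E_\beta^{\top}|^2d\mu$ as in (\ref{l59}), after which a pigeonhole choice of $\beta_0$ produces one nonzero field with quotient $\le1$. Exhibiting ``one symmetric test field'' is indeed all that is needed (your fallback remark), but the averaging is precisely the mechanism that finds it, and you never supply it.

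Your eigenfunction route could in principle be completed, but none of the steps that carry it are in the proposal: (a) Bochner gives $\int_M|\mathrm{Hess}\,f|^2=\lambda_1\int_M|\nabla^Mf|^2-\int_M\mathrm{Ric}^M(\nabla^Mf,\nabla^Mf)$; (b) the Gauss equation for minimal $M^{n-1}\subset\mathbb S^{2n-1}$ gives $\mathrm{Ric}^M(v,v)=(n-2)|v|^2-\sum_i|A^{M,\mathbb S}(e_i,v)|^2$; (c) the Legendrian identities (total symmetry of $\langle A^{M,\mathbb S}(\cdot,\cdot),J\cdot\rangle$ together with $\langle A^{M,\mathbb S},JX^M\rangle=0$) give $\sum_i|A^{M,\mathbb S}(e_i,v)|^2=|\langle A^{M,\mathbb S},Jv\rangle|^2$, so that the quotient for $w_0=\nabla^Mf$ equals exactly $\lambda_1-(n-2)$; and (d) Takahashi's theorem ($\Delta^M\langle E_\beta,X^M\rangle=-(n-1)\langle E_\beta,X^M\rangle$, with mean zero automatic) gives $\lambda_1\le n-1$, whence the quotient is $\le1$. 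You name these ingredients as things you ``expect to exploit,'' but you defer exactly the cancellation in (c) and the sharp comparison in (d) that constitute the proof; as written, the proposal establishes only the easy symmetry condition in (\ref{l501}) and leaves the inequality unproven.
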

\begin{rem}
The condition $\langle\nabla^M_x
w_0,y\rangle=\langle\nabla^M_yw_0,x\rangle$ implies that
$\frac{1}{r^2}J(\gamma w_0)$ induces a Lagrangian variation.
\end{rem}
\begin{proof}
Define
\begin{align}
f(y)=\int_M|\nabla^My|^2-|\langle A^{M,\mathbb S},Jy\rangle|^2d\mu \notag
\end{align}
for $y\in \Gamma (T\psi(M))$. Let $E_1,...,E_{2n}$ be the standard
basis for $\mathbb C^n$ with $E_{\alpha+n}=JE_{\alpha}$ for
$\alpha=1,...,n$. We claim that there exists a $\beta_0$ in
$\{1,...,2n\}$ such that $w_0=E_{\beta_0}^{\top}$ is a nonzero
vector field satisfying $f(w_0)\leq\int_M|w_0|^2d\mu$, where
$E_{\beta_0}^{\top}$ is the projection of $E_{\beta_0}$ into the
tangent space of $\psi(M)$. For fixed $q\in \psi (M)$, choose a
local normal coordinate system $x^1,...,x^{n-1}$ at $q$. Denote
$\partial_j=\frac{\partial}{\partial x^j}$. We have
\begin{align}
\langle \nabla^M_{\partial_k}(E_{\beta}^{\top}), \partial_j\rangle
=\langle \frac{\partial}{\partial x^k}(E_{\beta}-E_{\beta}^{\perp}), \partial_j\rangle
=-\langle \frac{\partial}{\partial x^k}E_{\beta}^{\perp}, \partial_j\rangle
=\langle E_{\beta}, A^M_{jk}\rangle, \label{l51}
\end{align}
where $E_{\beta}^{\perp}$ is the normal part of $E_{\beta}$. Since
the map $\psi$ is a Legendrian immersion into $\mathbb S^{2n-1}$,
the span $\{\partial_1,...,\partial_{n-1},X^M\}$ is a Lagrangian
plane in $\mathbb C^n$. It gives
\begin{align}
&A_{kj}^{M}=A_{kj}^{M,\mathbb S}+\langle A_{kj}^M,X^M\rangle X^M=A_{kj}^{M,\mathbb S}
-\delta_{kj} X^M \quad\mbox{at } q\label{l52}
\end{align}
and the second fundamental form $A^{M,\mathbb S}_{jk}$ of the
submanifold $\psi(M)$ in $\mathbb S^{2n-1}$ is orthogonal $JX^M$
because that
\begin{align}\langle A^{M,\mathbb
S}_{kj},JX^M\rangle =\langle\frac{\partial}{\partial
x^k}(\partial_j),JX^M\rangle
=-\langle\partial_j,J\partial_k\rangle=0. \notag
\end{align}
Since $\partial_l$ and $X^M$ are orthogonal, we have
$({JA^{M,\mathbb S}})^{\top}=JA^{M,\mathbb S}$. Recall that $\psi$
is a minimal immersion in $\mathbb S^{2n-1}$ and hence $H^{M,\mathbb
S}=0$. Combining the equations (\ref{l51}) and (\ref{l52}), the
first term of $f(E^{\top}_{\beta})$ can be simplified as
\begin{align}
|\nabla ^{M}(E_{\beta}^{\top})|^{2}
&=\underset{j,k =1}{\overset{n-1}{\sum }}|\langle E_{\beta},A_{kj}^{M,\mathbb S}\rangle
-\langle E_{\beta},\delta_{kj} X^M\rangle|^2 \notag \\
&=|\langle E_{\beta},A^{M,\mathbb S}\rangle|^{2}
-2\langle E_{\beta},H^{M,\mathbb S}\rangle\langle E_{\beta},X^M\rangle
+(n-1)\langle E_{\beta},X^M\rangle^{2} \notag \\
&=|\langle E_{\beta},A^{M,\mathbb S}\rangle|^{2}
+(n-1)\langle E_{\beta},X^M\rangle^{2} \quad \mbox{at} \quad q.
\label{l53}
\end{align}
Using the equality $({JA^{M,\mathbb S}})^{\top}=JA^{M,\mathbb S}$,
the second term of $f(E^{\top}_{\beta})$ can be simplified as
\begin{align}
\langle A^{M,\mathbb S},J(E_{\beta}^{\top})\rangle =-\langle
JA^{M,\mathbb S},E_{\beta}^{\top}\rangle =-\langle JA^{M,\mathbb
S},E_{\beta}\rangle =\langle A^{M,\mathbb S},JE_{\beta}\rangle.
\label{l54}
\end{align}
Combining (\ref{l53}) and (\ref{l54}), it gives
\begin{align}
f(E_{\alpha}^{\top})&=\int_M\left(|\langle E_{\alpha},A^{M,\mathbb S}\rangle|^{2}
+(n-1)\langle E_{\alpha},X^M\rangle^{2}-|\langle E_{\alpha+n},A^{M,\mathbb S}\rangle|^{2}
\right)d\mu, \label{l57}\\
f(E_{\alpha+n}^{\top})&=\int_M\left(|\langle E_{\alpha+n},A^{M,\mathbb S}\rangle|^{2}
+(n-1)\langle E_{\alpha+n},X^M\rangle^{2}-|\langle E_{\alpha},A^{M,\mathbb S}\rangle|^{2}
\right)d\mu \label{l58}
\end{align}
for $\alpha=1,...,n$. Summing (\ref{l57}) and (\ref{l58}) over
$\alpha=1,...,n$  gives
\begin{align}
\overset{n}{\underset{\alpha=1}{\sum}}\left(f(E^{\top}_{\alpha})+f(E^{\top}_{\alpha+n})\right)
=(n-1)\overset{2n}{\underset{\beta=1}{\sum}}\int_M\langle E_{\beta},X^M\rangle^{2}d\mu
=(n-1)\int_{M}d\mu \label{l59}
\end{align}
since $|X^M|=1$.

On the other hand, we have $\underset{\beta =1}{\overset{2n}{\sum
}}|E^{\top}_{\beta}|^{2} =\underset{\beta
=1}{\overset{2n}{\sum}}\overset{n-1}{\underset{j=1}{\sum }} \langle
E_{\beta},\partial_j\rangle^2=\overset{n-1}{\underset{j=1}{\sum
}}|\partial_j|^2=n-1$ at $q$ because $\partial_1,...,\partial_{n-1}$
is an orthonormal basis for $T_q\psi(M)$. Plugging it into
(\ref{l59}), we get
\begin{align}
\underset{\beta =1}{\overset{2n}{\sum }}\int_
M\left(|\nabla ^{M}(E^{\top}_{\beta})|^{2}-|\langle A^{M,\mathbb S},J(E^{\top}_{\beta
})\rangle|^{2}\right)d\mu =\underset{\beta =1}{\overset{2n}{\sum }}\int_
M|E^{\top}_{\beta}|^{2}d\mu. \notag
\end{align}
Therefore, there exists a $\beta_0$ in $\{1,..,2n\}$ such that
$E^{\top}_{\beta_0}$ is a nonzero vector field and
\begin{align}
\int_M\left(|\nabla ^{M}(E^{\top}_{\beta_0})|^{2}
-|\langle A^{M,\mathbb S},J(E^{\top}_{\beta_0})\rangle|^{2}\right)d\mu\leq
\int_M|E^{\top}_{\beta_0}|^{2}d\mu. \notag
\end{align}
Which is the inequality in (\ref{l501}). Using (\ref{l51}), $\langle
E_{\beta_0},A^M_{jk}\rangle$ is symmetric for $j,k$, it follows that
the vector field $w_0=E_{\beta_0}^{\top}$ satisfies both conditions
in (\ref{l501}).
\end{proof}
Now we are ready to proved Theorem \ref{t5}:

\begin{proof}[Proof of Theorem \ref{t5}] 

By Theorem \ref{thm3}, it suffices to construct a smooth normal
vector field $V$ such that (\ref{t301}) holds while
$\int_{\Sigma}\langle V,-L^{\perp}V\rangle
e^{-\frac{|X|^2}{4}}d\mu<0$. Assume $V=J(\gamma w)$, where $w\in
\Gamma(T\psi(M))$ would be chosen later. Because $H$ is parallel
to $Ju_s$ (see 
\cite{An}, p.40), we have $\int_{\Sigma}\langle V,H\rangle
e^{-\frac{|X|^2}{4}}d\mu=0$ and the first condition in (\ref{t301})
is satisfied. The second integral in (\ref{t301}) is
\begin{align}
\int_{\Sigma}Ve^{-\frac{|X|^{2}}{4}}d\mu
=i\int_{\gamma }\gamma e^{-\frac{r^{2}}{4}}r^{n-1}ds
\int_{M}wd\mu _{M}. \notag
\end{align}
Recall that the construction of $\gamma$ in \cite{An} is made by
$m>1$ pieces $\Gamma_1,...,\Gamma_m$ which each corresponds one
period of curvature function. (In particular, when $\gamma$ is the
circle $\mathbb S^1({\sqrt{2n}})$, we take $m=2$.) Every piece
$\Gamma_i$ is the same as $\Gamma_1$ up to a rotation. Suppose the
rotation index of $\gamma$ is $l$. Then we have
\begin{align}
\int_{\gamma }\gamma e^{-\frac{r^{2}}{4}}r^{n-1}ds
&=\underset{j =1}{\overset{m}{\sum }}\int_{\Gamma_j }e^{-\frac{r^{2}}{4}}r^ne^{i\phi}ds
 \notag \\
&=\int_{\Gamma_1 }e^{-\frac{r^{2}}{4}}r^ne^{i\phi}(1+e^{i\frac{2l\pi}{m}}+...+e^{i
\frac{(m-1)l}{m}\cdot 2\pi})ds
=0, \notag
\end{align}
since $1+e^{i\frac{2\pi}{m}}+...+e^{i \frac{(m-1)l}{m}\cdot 2\pi}=0$
for $m>1$. Therefore, the second integral condition in (\ref{t301})
also holds.

For the case $n\geq 3$, we choose $w=w_0$ satisfying (\ref{l501})
and $V_0=J(\gamma w_0)$. Plugging the first inequality of
(\ref{l501}) into (\ref{l303}), the weighted inner product $\langle
V_0,-L^{\perp}V_0 \rangle _e$ becomes
\begin{align}
&\int_{\Sigma }\langle V_0,-L^{\perp }V_0\rangle e^{-\frac{|X|^{2}}{4}}d\mu \notag \\
\leq&-\int_{\gamma}\left(\frac{1}{2}r^{2}-3+4\sin ^{2}
 (\theta -\phi)\right)e^{\frac{-r^2}{4}}r^{n-1}ds\int_M|w_0|^{2}d\mu_M \notag \\
= &-\int_{\gamma }\Bigl(\bigl(n-3+4\sin^{2}(\theta -\phi )\bigr)\Bigr)
e^{-\frac{r^{2}}{4}}r^{n-1}ds\int_{M}|w_0|^{2}d\mu _{M} \notag \\
<&0. \notag
\end{align}
We use  (\ref{p501}) to conclude the equality above.

For the case $n=2$, the only minimal Legendrian curves in $\mathbb
S^3$ are great circles. They are totally geodesic in $\mathbb S^3$.
Therefore, the weighted inner product $\langle
V,-L^{\perp}V\rangle_e$ can be simplified as
\begin{align}
&\int_{\Sigma }\langle V,-L^{\perp }V\rangle e^{-\frac{|X|^{2}}{4}}d\mu \notag \\
=&\int_{\gamma}e^{-\frac{r^{2}}{4}}r\Bigl(\int_{\mathbb S^1}|\nabla^{\mathbb S^1}w|^{2}
-\left(\frac{1}{2}r^2-2+4\sin
^{2}(\theta -\phi)\right)|w|^{2}d\mu _{\mathbb S^1}\Bigr)ds. \notag \\
=&\int_{\gamma}e^{-\frac{r^{2}}{4}}r\Bigl(\int_{\mathbb S^1}|\nabla^{\mathbb S^1}w|^{2}
-4\sin^{2}(\theta -\phi)|w|^{2}d\mu _{\mathbb S^1}\Bigr)ds. \notag
\end{align}
Here we use (\ref{p501}) again to get the last equality. Finally, by
choosing $w$ to be the tangent vector of the great circle, which is
a parallel vector field, we can make the weighted inner product
negative.
\end{proof}
\subsection{The unstability for Lagrangian variations}
Since Anciaux's examples are Lagrangian, it is natural to
investigate whether these examples are still unstable under the more
restricted Lagrangian variations. That is, for variations  from
the deformation of Lagrangian submanifolds. A simple calculation
shows that a vector field $V$ induces a Lagrangian variation if
and only if the associated one form  $\alpha_V=\omega(V,\cdot)$ is
closed, i.e.
\begin{align}
\langle \nabla^{\perp}_XV,JY\rangle=\langle \nabla^{\perp}_YV,JX\rangle, \label{t6}
\end{align}
where $\nabla^{\perp}$ is the normal connection on $N\Sigma$ and
$X,~Y\in T\Sigma$. For the problem, we can prove
\begin{thm}\label{thm6}
Let $\Sigma$ be an n-dimensional closed Lagrangian self-shrinker as
in Lemma \ref{l2}. Then $\Sigma$ is $F$-unstable under Lagrangian
variations for the following cases
\begin{itemize}
\item[\rm{(i)}] $n=2$ or  $n\geq7$,
\item[\rm{(ii)}] $2<n<7$, and
$E\in[\frac{1}{\sqrt{2}}E_{max},E_{max}]$,
\end{itemize}
where $E$ and $E_{max}$ are described in ($\ref{l202}$).
\end{thm}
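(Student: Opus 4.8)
The plan is to apply Theorem~\ref{thm3} (in its obvious Lagrangian-restricted form): to prove $F$-instability under Lagrangian variations it suffices to exhibit one normal field $V$ that (a) induces a Lagrangian variation, i.e. satisfies \eqref{t6}, (b) meets the two linear constraints in \eqref{t301}, and (c) has $\langle V,-L^{\perp}V\rangle_e<0$; for such a $V$ the computation in the proof of Theorem~\ref{thm3} gives $D^2_{(V,y,\tau)}F<0$ for every $y,\tau$. Guided by the Remark after Lemma~\ref{l5}, I would take
\[
V=\tfrac{1}{r^2}J(\gamma w_0),
\]
with $w_0\in\Gamma(T\psi(M))$ the field supplied by Lemma~\ref{l5}. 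The factor $r^{-2}$ is exactly what is needed: checking \eqref{t6} directly, the mixed tangent--normal components of $\nabla^{\perp}V$ become symmetric because $\frac{d}{ds}r^{-2}=-2r^{-3}\cos(\theta-\phi)$ cancels the antisymmetric contribution visible in \eqref{l34}, while the tangential symmetry $\langle\nabla^{M}_x w_0,y\rangle=\langle\nabla^{M}_y w_0,x\rangle$ from \eqref{l501} handles the $M$--$M$ components; hence $\alpha_V$ is closed and the variation is Lagrangian. When $n=2$, $M$ is one-dimensional and this tangential symmetry is automatic, so any $w$ works.

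Next I would verify \eqref{t301} and compute the quadratic form. Since $H\parallel Ju_s$ and $w_0\perp X^M$, the first constraint $\langle V,H\rangle_e=0$ holds exactly as in Theorem~\ref{t5}; and $\int_{\Sigma}V e^{-|X|^2/4}d\mu=i\bigl(\int_{\gamma}\gamma\,r^{n-3}e^{-r^2/4}ds\bigr)\int_M w_0\,d\mu_M=\overrightarrow{0}$ by the same rotation-index cancellation over the pieces $\Gamma_1,\dots,\Gamma_m$ used in Theorem~\ref{t5}. For the energy I would rerun the computation of Lemma~\ref{l3} with the weight $r^{-2}$ inserted. The one genuinely new point is the gradient term: writing $\nabla^{\perp}_{u_s}V=(\frac{d}{ds}r^{-2})J(\gamma w_0)+r^{-2}\nabla^{\perp}_{u_s}J(\gamma w_0)$, the cross term against $\langle J(\gamma w_0),\nabla^{\perp}_{u_s}J(\gamma w_0)\rangle=r\cos(\theta-\phi)|w_0|^2$ exactly cancels the square of the new term, leaving the clean identity $|\nabla^{\perp}V|^2=r^{-4}|\nabla^{\perp}J(\gamma w_0)|^2$. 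Together with $|\langle A,V\rangle|^2=r^{-4}|\langle A,J(\gamma w_0)\rangle|^2$ and $|V|^2=r^{-2}|w_0|^2$, substitution into \eqref{r2} and Lemma~\ref{l3}(i),(ii) yields \eqref{l403}:
\[
\langle V,-L^{\perp}V\rangle_e=\int_{\gamma}r^{n-5}e^{-\frac{r^2}{4}}ds\int_M\!\bigl(|\nabla^{M}w_0|^2-|\langle A^{M,\mathbb S},Jw_0\rangle|^2\bigr)d\mu_M+\int_{\gamma}\Bigl(\tfrac{2-4\sin^2(\theta-\phi)}{r^4}-\tfrac{1}{2r^2}\Bigr)r^{n-1}e^{-\frac{r^2}{4}}ds\int_M|w_0|^2 d\mu_M.
\]

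For $n=2$ the first integral vanishes identically (minimal Legendrian curves in $\mathbb S^3$ are totally geodesic great circles, so $A^{M,\mathbb S}=0$ and a unit parallel $w_0$ has $\nabla^{M}w_0=0$); using \eqref{p502} to eliminate the $\frac{1}{2r^2}$ term collapses the remaining integral exactly to $\int_{\gamma}\frac{4\cos2(\theta-\phi)}{r^3}e^{-\frac{r^2}{4}}ds$, and then I would integrate the total derivative $\frac{d}{ds}\bigl(r^{-2}e^{-r^2/4}\cos(\theta-\phi)\bigr)$ over the closed curve, using the ODEs \eqref{l201}, to get $\int_{\gamma}\frac{\cos2(\theta-\phi)}{r^3}e^{-\frac{r^2}{4}}ds=-\tfrac14\int_{\gamma}\frac{e^{-r^2/4}}{r}ds<0$, whence $\langle V,-L^{\perp}V\rangle_e<0$. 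For $n\ge3$ I would instead invoke the inequality in \eqref{l501}: since $r^{n-5}e^{-r^2/4}>0$, the first integral is bounded above by $\int_{\gamma}r^{n-5}e^{-r^2/4}ds\int_M|w_0|^2$, and after using \eqref{p502} the whole expression is dominated by
\[
\langle V,-L^{\perp}V\rangle_e\le\int_M|w_0|^2 d\mu_M\int_{\gamma}\frac{(3-n)+4\cos2(\theta-\phi)}{r^4}\,r^{n-1}e^{-\frac{r^2}{4}}ds.
\]
If $n\ge7$ then $(3-n)+4\cos2(\theta-\phi)\le(7-n)+4(\cos2(\theta-\phi)-1)<0$ pointwise, since $\sin(\theta-\phi)>0$ forbids $\theta-\phi\in\{0,\pi\}$. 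If $2<n<7$ and $E\ge\frac{1}{\sqrt2}E_{max}$, the conservation law \eqref{l202} with $r^{n}e^{-r^2/4}\le E_{max}$ gives $\sin(\theta-\phi)=E/(r^{n}e^{-r^2/4})\ge E/E_{max}\ge\frac{1}{\sqrt2}$, so $\theta-\phi\in[\frac{\pi}{4},\frac{3\pi}{4}]$ and $\cos2(\theta-\phi)\le0$; then $(3-n)+4\cos2(\theta-\phi)\le3-n\le0$ with strict inequality on a set of positive measure (one checks that $\theta-\phi$ is nonconstant unless $E=E_{max}$). In all listed cases the bound is negative.

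The main obstacle is the energy computation of the second paragraph: confirming the cancellation that produces $|\nabla^{\perp}V|^2=r^{-4}|\nabla^{\perp}J(\gamma w_0)|^2$, and then arranging the algebra so that \eqref{p502} collapses everything to the single integrand $(3-n)+4\cos2(\theta-\phi)$. Two further delicate points are recognizing that the threshold $E\ge\frac{1}{\sqrt2}E_{max}$ is precisely the condition forcing $\cos2(\theta-\phi)\le0$, and noticing that for $n=2$ one must keep the \emph{exact} value of $\langle V,-L^{\perp}V\rangle_e$ (the first integral being genuinely zero, not merely bounded) so that the total-derivative identity rescues this borderline dimension.
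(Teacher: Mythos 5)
Your proposal is correct and takes essentially the same route as the paper's own proof: the same Lagrangian field $V=\frac{1}{r^2}J(\gamma w_0)$ from $N_1$ (with the symmetry half of (\ref{l501}) guaranteeing (\ref{t6})), the same constraint verification, the same weighted-energy formula (\ref{l403}), the same use of Lemma \ref{l5} and the identity (\ref{p502}), and the same reduction to the sign of $(3-n)+4\cos\bigl(2(\theta-\phi)\bigr)$ in the three cases. Even your $n=2$ total-derivative identity $\frac{d}{ds}\bigl(r^{-2}e^{-r^2/4}\cos(\theta-\phi)\bigr)$ is exactly (\ref{p502}) specialized to $n=2$, and both arguments land on $\langle V,-L^{\perp}V\rangle_e=-2\pi\int_{\gamma}r^{-1}e^{-r^2/4}\,ds<0$.
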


 Because
$\langle\nabla^{\perp}_{u_s}V,Ju_j\rangle\neq\langle\nabla^{\perp}_{u_j}V,Ju_s\rangle$
for $V\in N_0$, it  does not induce a Lagrangian variation.
Thus to prove the theorem, we need to consider variations different from those
in \S \ref{us42}. We now define a new set $N_1$ as follows:
\begin{align}
N_1=\{V|&V=\frac{1}{r^2}J(\gamma w), ~\mbox{where}~w\in \Gamma(T\psi(M))
~\mbox{satisfies}~ \notag\\
&\langle\nabla^M_{x}w,y\rangle
=\langle\nabla^M_{y}w,x\rangle, \mbox{ for all }x,y\in T\psi(M)\}. \notag
\end{align}
For $V\in N_1$, we claim that $V$  satisfies the equation (\ref{t6}) and
hence indeed induces a Lagrangian variation. Suppose
$V=\frac{1}{r^2}J(\gamma w)$. Noting
that $\gamma'=e^{i\theta}$, $\langle V,Ju_s\rangle=0$, and $r'$ satisfying
(\ref{l201}), we therefore have
\begin{align}
\langle\nabla^{\perp}_{u_s}V,Ju_j\rangle
&=-\frac{2r'}{r^3}\langle J(\gamma w),J(\gamma e_j)\rangle
+\frac{1}{r^2}\langle J(\gamma'w),J(\gamma e_j)\rangle \notag \\
&=-\frac{\cos(\theta-\phi)}{r}\langle w,e_j\rangle, \notag \\
\langle\nabla^{\perp}_{u_j}V,Ju_s\rangle
&=-\langle V, \nabla^{\perp}_{u_j}Ju_s\rangle
=-\frac{1}{r^2}\langle J(\gamma w),J(\gamma' e_j)\rangle
=-\frac{\cos(\theta-\phi)}{r}\langle w,e_j\rangle, \notag \\
\langle\nabla^{\perp}_{u_k}V,Ju_j\rangle
&=\frac{1}{r^2}\langle\frac{\partial}{\partial x_k}J(\gamma w),J(\gamma e_j)\rangle
=\langle\nabla^M_{e_k}w,e_j\rangle \notag \\
&=\langle\nabla^M_{e_j}w,e_k\rangle
=\langle\nabla^{\perp}_{u_j}V,Ju_k\rangle. \notag
\end{align}
This proves the claim.

For $V\in N_1$, the operator $\langle V,-L^{\perp}V\rangle_e$ can be
simplified as in the following lemma.
\begin{lem} \label{l4}
Assume that $\Sigma$ is a closed Lagrangian self-shrinker as in
Lemma~\ref{l2} and $V\in N_1$ is represented by
$\frac{1}{r^2}J(\gamma w)$. The second fundamental forms of $\Sigma
$\ in $\mathbb C^{n}$ and of $\psi(M)$ in $\mathbb S^{2n-1}$ are
denoted by $A^{\Sigma}$ and $A^{M,\mathbb{S}}$, respectively. Then we
have
\begin{align}
(i) ~~~ &|\langle A^{\Sigma},V\rangle|^{2}=\frac{1}{r^4}|\langle A^{M,\mathbb S},Jw\rangle|^{2}
+\frac{2}{r^4}\sin^{2}(\theta -\phi )|w|^{2},\label{l401} \\
(ii) ~~&|\nabla ^{\perp }V|^{2}=\frac{1}{r^4}|\nabla ^{M}w|^{2}
+\frac{2\cos ^{2}(\theta -\phi)}{r^4}|w|^{2},\label{l402} \\
(iii) ~&\langle V,-L^{\perp}V\rangle_e
=-\int_{\gamma}\left(\frac{1}{2}r^{2}-2+4\sin ^{2}
 (\theta -\phi)\right)e^{\frac{-r^2}{4}}r^{n-5}ds\int_M|w|^{2}d\mu_M \notag \\
&~~~~~~~~~~~~~~~+\int_{\gamma}e^{\frac{-r^2}{4}}r^{n-5}ds
\int_M\left(|\nabla ^{M}w|^{2}-|\langle A^{M,\mathbb S},Jw\rangle|^{2}\right)d\mu_M.\label{l403}
 \end{align}
\end{lem}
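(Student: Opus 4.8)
The plan is to compute the three quantities pointwise at a representative point $p=\gamma(s_0)q$ exactly as in the proof of Lemma~\ref{l3}, carrying the extra scalar factor $\frac{1}{r^2}$ through the calculation, and then to assemble (iii) from (i) and (ii) via the weighted identity (\ref{r2}). The only genuinely new feature compared with the $N_0$ computation is that the prefactor $\frac{1}{r^2}$ depends on $s$, so its derivative enters when differentiating $V$ in the $u_s$ direction; the self-shrinker equation (\ref{l201}) will be what makes this extra term collapse.

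For (i), since $\frac{1}{r^2}$ is a scalar and the second fundamental form $A^{\Sigma}$ is unchanged, I would simply note that $\langle A^{\Sigma},V\rangle=\frac{1}{r^2}\langle A^{\Sigma},J(\gamma w)\rangle$ pointwise, so that $|\langle A^{\Sigma},V\rangle|^2=\frac{1}{r^4}|\langle A^{\Sigma},J(\gamma w)\rangle|^2$, and (\ref{l301}) yields (\ref{l401}) immediately.

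For (ii), I would reuse the four component computations of $\langle\nabla^{\perp}_{u_\alpha}V,Ju_\beta\rangle$ already carried out in the verification that $V\in N_1$ induces a Lagrangian variation (the display preceding this lemma). The decisive point is the $u_s$ component: differentiating $V=\frac{1}{r^2}J(\gamma w)$ produces the extra term $-\frac{2r'}{r^3}J(\gamma w)$, and after pairing with $Ju_j$ and invoking $r'=\cos(\theta-\phi)$ from (\ref{l201}) together with the $N_0$ term from (\ref{l34}), the mixed components reduce to $\langle\nabla^{\perp}_{u_s}V,Ju_j\rangle=-\frac{\cos(\theta-\phi)}{r}\langle w,e_j\rangle=\langle\nabla^{\perp}_{u_j}V,Ju_s\rangle$, while $\langle\nabla^{\perp}_{u_s}V,Ju_s\rangle=0$ and $\langle\nabla^{\perp}_{u_k}V,Ju_j\rangle=\langle\nabla^M_{e_k}w,e_j\rangle$. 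I would then form $|\nabla^{\perp}V|^2=g^{\alpha\beta}\langle\nabla^{\perp}_{u_\alpha}V,\nabla^{\perp}_{u_\beta}V\rangle$ using the metric (\ref{l32}) (so $g^{ss}=1$, $g^{jk}=\frac{1}{r^2}\delta_{jk}$) and the orthonormal normal frame $\{Ju_j/r,\,Ju_s\}$; collecting the tangential--tangential terms into $\frac{1}{r^4}|\nabla^M w|^2$ and the two mixed blocks each into $\frac{\cos^2(\theta-\phi)}{r^4}|w|^2$ gives (\ref{l402}).

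Finally, for (iii) I would substitute (\ref{l401}) and (\ref{l402}) into (\ref{r2}), use $|V|^2=\frac{1}{r^2}|w|^2$ and the volume splitting $e^{-|X|^2/4}d\mu_{\Sigma}=e^{-r^2/4}r^{n-1}ds\,d\mu_M$, and separate the $\gamma$- and $M$-integrals. The $|\nabla^M w|^2-|\langle A^{M,\mathbb S},Jw\rangle|^2$ terms pick up the weight $\frac{1}{r^4}r^{n-1}=r^{n-5}$, while the coefficient of $|w|^2$ becomes $\bigl(2\cos^2(\theta-\phi)-2\sin^2(\theta-\phi)-\frac{r^2}{2}\bigr)r^{n-5}$; rewriting $\cos^2=1-\sin^2$ turns this into $-\bigl(\frac{1}{2}r^2-2+4\sin^2(\theta-\phi)\bigr)r^{n-5}$, which is exactly (\ref{l403}). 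The main obstacle lies entirely in (ii): one must correctly account for the $s$-dependence of $\frac{1}{r^2}$ and recognize that $r'=\cos(\theta-\phi)$ is precisely what cancels the would-be extra contribution, so that $|\nabla^{\perp}V|^2$ becomes the clean $\frac{1}{r^4}$-rescaling of the $N_0$ answer (\ref{l302}); the trigonometric bookkeeping in (iii) is then routine.
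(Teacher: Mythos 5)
Your proposal is correct and follows essentially the same route as the paper: part (i) by factoring the scalar $\frac{1}{r^2}$ through Lemma \ref{l3}(i), part (ii) by reusing the component computations $\langle\nabla^{\perp}_{u_\alpha}V,Ju_\beta\rangle$ from the Lagrangian-variation verification (where the extra $-\frac{2r'}{r^3}$ term is handled via $r'=\cos(\theta-\phi)$), and part (iii) by substituting into (\ref{r2}) with the measure splitting $e^{-|X|^2/4}d\mu_{\Sigma}=e^{-r^2/4}r^{n-1}ds\,d\mu_M$. The trigonometric bookkeeping and the resulting weight $r^{n-5}$ match the paper's computation exactly.
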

\begin{proof}
(i) For $V\in N_1$, there exist $V_0\in N_0$ such that
$V=\frac{1}{r^2}V_0=\frac{1}{r^2}J(\gamma w)$. Using
the equation (\ref{l301}), we have
\begin{align}
|\langle A^{\Sigma},V\rangle|^{2}=\frac{1}{r^4}|\langle A^{\Sigma},V_0\rangle|^{2}
=\frac{1}{r^4}|\langle A^{M,\mathbb S},Jw\rangle|^{2}
+\frac{2}{r^4}\sin^{2}(\theta -\phi )|w|^{2}. \notag
\end{align}
(ii) Using the equations (\ref{t6}) and (\ref{l34}), we have
\begin{align}
\hspace{-0.35cm}
&\langle \nabla^{\perp} _{u_k}\frac{1}{r^2}J(\gamma w),Ju_{j}\rangle
=\frac{1}{r^2}\langle \nabla^{\perp} _{u_k}J(\gamma w),Ju_{j}\rangle
=\langle \nabla _{e_k}^{M}w,e_{j}\rangle \notag \\
&\langle\nabla^{\perp}_{u_k}\frac{1}{r^2}J(\gamma w),Ju_{s}\rangle
=\frac{1}{r^2}\langle\nabla^{\perp}_{u_k}J(\gamma w),Ju_{s}\rangle
=-\frac{1}{r}\cos (\theta -\phi)\langle w,e_j\rangle \label{l41} \\
&\langle\nabla^{\perp}_{u_s}\frac{1}{r^2}J(\gamma w),Ju_s\rangle
=\frac{-2r'}{r^3}\langle J(\gamma w),Ju_s\rangle
+\frac{1}{r^2}\langle \nabla^{\perp}_{u_s}J(\gamma w),Ju_s\rangle
=0. \notag
\end{align}
Using (\ref{l41}) and (\ref{t6}), the computation at $p$  gives
\begin{align}
&|\nabla ^{\perp }V|^{2}
=\langle\nabla^{\perp} _{u_{\alpha}}\frac{1}{r^2}J(\gamma w),
\nabla^{\perp} _{u_{\beta}}\frac{1}{r^2}J(\gamma w)\rangle
g^{\alpha \beta } \notag \\
=&\overset{n-1}{\underset{k=1}{\sum}}\langle\nabla^{\perp}_{u_k}\frac{1}{r^2}J(\gamma w),
\nabla^{\perp}_{u_k}\frac{1}{r^2}J(\gamma
w)\rangle\frac{1}{r^{2}}+\langle\nabla _{u_s}\frac{1}{r^2}J(\gamma w),
\nabla _{u_s}\frac{1}{r^2}J(\gamma w)\rangle \notag \\
=&\left(\overset{n-1}{\underset{j,k=1}{\sum}}\langle\nabla^{\perp}
_{u_k}\frac{1}{r^2}J(\gamma w),\frac{Ju_{j}}{r}\rangle^2
+\overset{n-1}{\underset{k=1}{\sum}}\langle\nabla^{\perp}_{u_k}\frac{1}{r^2}J(\gamma
w),Ju_s\rangle^2\right)\frac{1}{r^{2}} \notag \\
&+\overset{n-1}{\underset{j=1}{\sum}}\langle\nabla^{\perp}_{u_s}
\frac{1}{r^2}J(\gamma w),\frac{Ju_j}{r}\rangle^2 \notag \\
=&\frac{1}{r^4}\overset{n-1}{\underset{j,k=1}{\sum}}\langle\nabla_{e_k}^M w,e_j\rangle^2
+\frac{2}{r^4}\overset{n-1}{\underset{j=1}{\sum}}\cos^2(\theta-\phi)\langle w,e_j\rangle^2 \notag \\
=&\frac{1}{r^4}|\nabla ^{M}w|^{2}+\frac{2\cos ^{2}(\theta -\phi)}{r^4}|w|^{2}. \notag
\end{align}
(iii)
Plugging (\ref{l401}) and (\ref{l402}) into (\ref{r2}), and
using
$e^\frac{-|X|^2}{4}d\mu_{\Sigma}=e^{-\frac{r^2}{4}}r^{n-1}dsd\mu_M$,
we get
\begin{align}
&\langle V, -L^{\perp}V\rangle_e \notag \\
=&\int_{\Sigma}\left(|\nabla^{\perp}V|^2-|\langle A^{\Sigma}, V\rangle|^2-\frac{1}{2}|V|^2\right)
e^{-\frac{|X|^2}{4}}d\mu_{\Sigma} \notag \\
=&\int_{\gamma}\int_M\Big(|\nabla ^{M}w|^{2}+2\cos ^{2}(\theta
-\phi )|w|^{2}-\left(|\langle A^{M,\mathbb S},Jw\rangle|^{2}
+2\sin^{2}(\theta -\phi )|w|^{2}\right) \notag \\
&-\frac{1}{2}r^2|w|^2\Big) e^{-\frac{r^2}{4}}r^{n-5}d\mu_mds \notag \\
=&-\int_{\gamma}\left(\frac{1}{2}r^{2}-2+4\sin ^{2}
 (\theta -\phi)\right)e^{\frac{-r^2}{4}}r^{n-5}ds\int_M|w|^{2}d\mu_M \notag \\
&+\int_{\gamma}e^{\frac{-r^2}{4}}r^{n-5}ds
\int_M\left(|\nabla ^{M}w|^{2}-|\langle A^{M,\mathbb S},Jw\rangle|^{2}\right)d\mu_M.\notag
\end{align}
Thus (iii) is proved.
\end{proof}

\begin{proof}[Proof of Theorem \ref{thm6}]

By Theorem \ref{thm3}, it suffices to construct a smooth normal
Lagrangian variation $V$ such that $\int_{\Sigma}\langle
V,-L^{\perp}V\rangle e^{-\frac{|X|^2}{4}}d\mu<0$ while  (\ref{t301})
holds. Assume $V=\frac{1}{r^2}J(\gamma w)\in N_1$, where $w\in
\Gamma(T\psi(M))$ will be chosen later. Similar to the proof of
Theorem \ref{t5}, both the conditions in (\ref{t301}) hold.

We now  further specify $V$, so that $\int_{\Sigma}\langle V,-L^{\perp}V\rangle
e^{-\frac{|X|^2}{4}}d\mu<0$. When $n\geq 3$, we choose $w=w_0$ satisfying (\ref{l501}).
Then
$V_1=\frac{1}{r^2}J(\gamma w_0)$ is in $N_1$. From (\ref{l501}) and
(\ref{l403}), the weighted inner product $\langle V_1,-L^{\perp}V_1
\rangle _e$ becomes
\begin{align}
&\int_{\Sigma }\langle V_1,-L^{\perp }V_1\rangle e^{-\frac{|X|^{2}}{4}}d\mu \notag \\
\leq&-\int_{\gamma}\left(\frac{1}{2}r^{2}-3+4\sin ^{2}
 (\theta -\phi)\right)e^{\frac{-r^2}{4}}r^{n-5}ds\int_M|w_0|^{2}d\mu_M \notag \\
= &-\int_{\gamma }\Bigl(\bigl(n-3+4\sin^{2}(\theta -\phi)-4\cos^{2}(\theta -\phi )\bigr)\Bigr)
e^{-\frac{r^{2}}{4}}r^{n-5}ds\int_{M}|w_0|^{2}d\mu _{M}, \notag
\end{align}
 where
(\ref{p502}) is used to conclude the above equality. Thus
it suffices to show that
$f(s)=n-3+4\sin^2(\theta-\phi)-4\cos^{2}(\theta -\phi
)$ is nonnegative and positive at some point. Because $|\cos(\theta -\phi
)|\leq 1$, the
function $f$ is clearly nonnegative and positive somewhere for $n\geq7$.
 When $E\in[\frac{1}{\sqrt{2}}E_{max},E_{max}]$,
 one has $\sin(\theta-\phi)\in[\frac{1}{\sqrt{2}},1]$ from (\ref{l202}) and
 hence
$f(s)$ is nonnegative and positive somewhere.

In the case  $n=2$,  the only minimal Legendrian
curves in $\mathbb S^3$ are great circles which are totally geodesic.
 Choosing $w_1$ to be the tangent vector of the
great circle, we have $|\nabla^{\mathbb S^1}w_1|=0$ and $|w_1|=1$.
The vector field $V_1=\frac{1}{r^2}J(\gamma w_1)$ gives a Lagrangian
variation and the weighted inner product $\langle
V_1,-L^{\perp}V_1\rangle_e$  in (\ref{l403}) can be simplified as
\begin{align}
&\int_{\Sigma }\langle V_1,-L^{\perp }V_1\rangle e^{-\frac{|X|^{2}}{4}}d\mu \notag \\
=&-\int_{\gamma}\left(\frac{1}{2}r^2-2+4\sin^2(\theta-\phi)\right)
e^{-\frac{r^{2}}{4}}r^{-3}ds\int_{\mathbb S^1}|w|^2d\mu_{\mathbb S^1} \notag \\
=&-2\pi\int_{\gamma}\left(\frac{1}{2}r^2
+2\left(\sin^2(\theta-\phi)-\cos^2(\theta-\phi)\right)\right)
e^{-\frac{r^{2}}{4}}r^{-3}ds\notag
\end{align}
Using (\ref{p502}), it follows that
\begin{align}
\int_{\gamma}\frac{1}{2}r^2e^{-\frac{r^{2}}{4}}r^{-3}ds
=\int_{\gamma}2\left(\sin^2(\theta-\phi)-\cos^2(\theta-\phi)\right)
e^{-\frac{r^{2}}{4}}r^{-3}ds. \notag
\end{align}
Therefore, $\langle V_1,-L^{\perp}V_1\rangle_e
=-2\pi\int_{\gamma}r^2e^{-\frac{r^{2}}{4}}r^{-3}ds<0$,
and concludes the Lagrangian unstability in Theorem \ref{thm6}.
\end{proof}

\end{document}